    \newcommand{\TT}{\ensuremath{ [T_{1},T_{2}] }}
    \newcommand{\ERRE}[1]{\ensuremath{\mathbb{R}^{#1}}}
    \newcommand{\ELEDOT}[2]{\ensuremath{\left\langle #1,#2 \right\rangle }}
    \newcommand{\ELEDOTT}[2]{\ensuremath{\big\langle #1,#2 \big\rangle_{\TT{}\times\Omega} }}
    \newcommand{\ELEDOTTT}[2]{\ensuremath{\big\langle #1,#2 \big\rangle_{Q_{T}} }}
    \newcommand{\HYPEQN}[3]{\ensuremath{
                            \big( -i\partial_{t} + {#1}_{0}(t,x)\big)^{2}{#3}
                            - \sum_{j=1}^{n} \big(-i\partial_{x_{j}} + {#1}_{j}(t,x) \big)^{2} {#3}
                            + {#2}(t,x){#3} }}
    \newcommand{\HYPOP}[2]{\ensuremath{
                            \big( -i\partial_{t} + {#1}_{0}(t,x)\big)^{2}
                            - \sum_{j=1}^{n} \big(-i\partial_{x_{j}} + {#1}_{j}(t,x) \big)^{2}
                            + {#2}(t,x) }}
    \newcommand{\HYPOPP}[2]{\ensuremath{
                            \big( -i\partial_{t} + {#1}_{0}\big)^{2}
                            - \sum_{j=1}^{n} \big(-i\partial_{x_{j}} + {#1}_{j} \big)^{2}
                            + {#2} }}
    \newcommand{\TRANSPOP}[1]{\ensuremath{
                            \big( -i\partial_{t} + {#1}_{0}(t,x)\big)
                            + \sum_{j=1}^{n} \omega_{j}\big(-i\partial_{x_{j}} + {#1}_{j}(t,x) \big)}}
    \newcommand{\TRANSPOPP}[1]{\ensuremath{
                            \big( -i\partial_{t} + {#1}_{0}\big)
                            + \sum_{j=1}^{n} \omega_{j}\big(-i\partial_{x_{j}} + {#1}_{j} \big)}}
    \newcommand{\DT}{\ensuremath{\partial_{t}}}
    \newcommand{\DXJ}[1]{\ensuremath{\partial_{x_{#1}}}}
    \newcommand{\POTENTIALS}[2]{\ensuremath{\big(\mathcal{#1} (t,x),#2 (t,x)\big)}}
    \theoremstyle{definition}
    \newtheorem{defn}{Definition}[section]
    \theoremstyle{plain}
    \newtheorem{lemma}{Lemma}[section]
    \theoremstyle{plain}
    \newtheorem{thm}{Theorem}[section]
    \theoremstyle{plain}
    \newtheorem{prop}[thm]{Proposition}
    \theoremstyle{plain}
    \theoremstyle{plain}
\begin{document}
    \title{Determination of time-dependent coefficients\\[1 ex]
          for a hyperbolic  inverse problem }
    \author{ Salazar, Ricardo. }

    \date{\today}

\maketitle

    \begin{abstract}
        We consider an inverse boundary value problem for the hyperbolic 
        partial differential equation
\begin{equation*}
(-i\partial_{t} + A_{0}(t,x))^{2}u(t,x)
- \sum_{j=1}^{n} \big(-i\partial_{x_{j}} + A_{j}(t,x)\big)^{2}u(t,x)
+ V(t,x)u(t,x) = 0
\end{equation*}
        with time dependent vector and scalar potentials $\big( \mathcal{A}=
        (A_{0},\dots,A_{m})$ and $V(t,x)$ respectively$\big)$ on a
        bounded, smooth cylindric domain $(-\infty,\infty)\times\Omega$. 
        Using a geometric optics construction we show that the boundary 
        data allows us to recover integrals of the potentials 
        along `light rays' and we then establish the uniqueness of these 
        potentials modulo a gauge transform. Also, 
        a logarithmic stability estimate is obtained 
        and the presence of obstacles inside the domain is studied.
        In this case, it is shown that under some geometric restrictions
        similar uniqueness results hold.
    \end{abstract}

\section{Introduction}
     Let $\Omega$ be a bounded domain in $\ERRE{n}$, $n\ge 2$,
     consider the hyperbolic equation with time dependent coefficients
     \begin{equation}\label{hyp:eqn}
     \HYPEQN{A}{V}{u} = 0 \quad\text{in }\ERRE{}\times\Omega,
     \end{equation}
     where $V(t,x)$, $A_{j}(t,x)$, $0\le j \le n$, are smooth functions vanishing
     when $\{|x| > R\}$ for some $R>0$. 
     The smooth vector field
     $\mathcal{A}(t,x)=
     (A_{0}(t,x),\dots,A_{n}(t,x))$ is called the \emph{vector potential}, the
     function $V(t,x)$ is called the \emph{scalar potential} and 
     equation (\ref{hyp:eqn}) is often referred to as the \emph{relativistic Schr\"odinger
     equation} (see \cite{Schiff}).
     
     For the above differential equation
     we impose the initial and boundary conditions
    \begin{align}\label{hyp:eqn1a}
     u(t,x) = \partial_{t}u(t,x) & = 0\makebox[2.5 em]{}
                                      \quad\text{for }t << 0 \\ \label{hyp:eqn1b}
                          u(t,x) & = f\makebox[2.5 em]{$(t,x)$}
                                      \quad\text{on }\ERRE{}\times\partial\Omega,
     \end{align}
     where $f$ is a compactly supported smooth function on $\ERRE{}\times\partial\Omega$.
     Solutions to (\ref{hyp:eqn}) satisfying (\ref{hyp:eqn1a}) and (\ref{hyp:eqn1b})
     are unique and we can define the \emph{Dirichlet to Neumann} operator by
     \begin{equation}\label{D:to:N}
     \Lambda(f) := \left( \partial_{\nu} + i A(t,x)\cdot\nu
     \right)u(t,x) \Big|_{\ERRE{}\times\partial\Omega}
     \end{equation}
     where $u$ is the solution of (\ref{hyp:eqn})-(\ref{hyp:eqn1b}), $\nu$ is the
     exterior unit normal to $\partial\Omega$ and we have set $A(t,x) =
     (A_{1}(t,x),\dots,A_{n}(t,x))$.
     The \emph{Inverse Boundary Value Problem} is the recovery of $\mathcal{A}(t,x)$
     and $V(t,x)$ knowing $\Lambda(f)$ for all $f\in C_{0}^{\infty}\big(\ERRE{}
     \times \partial\Omega \big)$.

      Inverse problems is a topic in mathematics that has been growing in interest for the
      past decades, in part, due to its wide range of applications, from medicine
      to acoustics to electromagnetism just to mention a few (see for instance \cite{Isakov}
      for some of the latest tools and techniques employed in the solutions of these problems).
      In the case of the hyperbolic inverse boundary value problem (1)-(4) with time 
      independent coefficients, a powerful tool called the \emph{boundary control method}, 
      or BC-method for short,
      was discovered by Belishev (see \cite{Belishev}). It was later developed by Belishev,
      Kurylev, Lassas, and others (\cite{Kurylev},\cite{Kurylev:Lassas}), and more recently
      a new approach to this problem based on the BC-method was developed by  Eskin 
      in (\cite{Eskin:approach},\cite{Eskin:approach2}). On a similar note, Stefanov and Uhlmann 
      established uniqueness and stability results for the wave equation in anisotropic 
      media (see \cite{Stefanov:Uhlmann} and \cite{Uhlmann} for a survey of these results). 

      Nevertheless, the case of time dependent coefficients has seen very 
      little progress in recent years. In the case of the vector potential 
      being identically equal to zero ($\mathcal{A}\equiv 0$ in (1)), Stefanov 
      \cite{Stefanov} and Ramm-Sj\"ostrand \cite{Ramm:Sjostrand}, have 
      shown that the Dirichlet to Neumann map completely determines the
      scalar potentials. More recently, Eskin \cite{Eskin:time:dependent}
      considered the case of time-dependent potentials that are analytic in time. 
      The analiticity of the time variable is related to the use of a
      unique continuation theorem
      established by Tataru in \cite{Tataru}. In this paper we eliminate 
      the restriction on the analiticity in the time variable and not 
      only we extend the uniqueness results in \cite{Stefanov}, 
      \cite{Ramm:Sjostrand}, but we also establish a logarithmic stability 
      estimate for the case when the vector potentials are compactly 
      supported in space and time.
      
      We also study the problem 
      with obstacles inside the domain and show that under some geometric
      considerations similar uniqueness results hold. The presence of 
      these obstacles in the domain may lead to the Aharonov-Bohm effect.
      This problem was considered by Nicoleau and Weder 
      in the context of the inverse scattering 
      (see \cite{Nicoleau} and \cite{Weder} respectively),
      and by Eskin \cite{Eskin:obstacles} in the context
      of the inverse boundary value problem for the Scrh\"odinger equation.

    This work is structured as follows. In section \ref{sec:non:uniq} we introduce the
    notion of \emph{gauge equivalent} for a pair of vector and scalar potentials and we
    make some remarks about uniqueness.
    In section \ref{sec:go:solns} we construct geometric
    optic solutions (GO for short) for equation (\ref{hyp:eqn})
    satisfying the set of initial conditions (\ref{hyp:eqn1a}). In section
    \ref{sec:green:formula} we establish
    a Green's formula  for these types of problems and show that the light ray
    transforms of gauge equivalent potentials agree.
    In section \ref{sec:proofs:part1} we prove uniqueness of the potentials
    in the case where no obstacles are allowed inside the domain $\Omega$. In 
    section \ref{sec:stability:estimates}, based on the works of 
    Isakov \cite{Isakov:uniq:stab}, Isakov and Sun \cite{Isakov:Sun} and more 
    recently Begmatov \cite{Begmatov}, we establish a stability
    result of logrithmic type for the particular case when the potentials
    are compactly supported in both space and time. Finally in
    section \ref{sec:proofs:part2} we consider the
    problem when one or more convex bodies are allowed inside the domain (by
    imposing a geometric restriction on the layout of these obstacles).

    \section{Gauge equivalence}\label{sec:non:uniq}
    The ultimate goal in most inverse boundary value problems is the recovery
    of the coefficients of a partial differential equation, however for application
    purposes this recovery is meaningless unless it can be done in some sort of `unique'
    way. In our case this type of uniqueness is obtained modulo a \emph{gauge transform}.
%
  \begin{defn}\label{def:gauge:equiv}
    We say that the vector and scalar potentials $\POTENTIALS{A}{V}$ and
    $\POTENTIALS{A'}{V'}$ are \emph{gauge equivalent} if there exists $g(t,x) \in
    C^{\infty}(\ERRE{}\times\overline{\Omega}))$ such that $g(t,x)\not = 0$
    on $\ERRE{}\times\overline{\Omega})$, $g=1$ on $\ERRE{}\times\partial\Omega$ and
    \begin{align*}
     \mathcal{A}'(t,x)= & \mathcal{A}(t,x) - \frac{i}{g(t,x)} \nabla_{t,x} g(t,x) \\
     V'(t,x) = & V(t,x),
    \end{align*}
    where $\nabla_{t,x} :=(\partial_{t},\partial_{x})=
    (\partial_{t},\partial_{x_{1}},\dots,\partial_{x_{n}})$ is the $(n+1)$-dimensional
    gradient.
    The mapping $(\mathcal{A},V)\to(\mathcal{A}^{\prime},V^{\prime})$
    is called a \emph{gauge transform}.
  \end{defn}
    The definition above includes the more general case when obstacles are present
    inside the domain. When $\Omega$ is simply connected (no obstacles), the
    gauge $g$ has the particular form $g(t,x)=e^{i\varphi(t,x)}$ where
    $\varphi(t,x)\in C^{\infty}(\ERRE{}\times\Omega)$.
    Then $-\frac{i}{g(t,x)}\nabla_{(t,x)}g(t,x)=\nabla_{(t,x)}\varphi(t,x)$
    and we see that two vector potentials are gauge equivalent if their
    difference is the gradient of a smooth function.
    The following proposition tells us that recovery of the potentials
    can only be done up to a gauge transform.
    \begin{prop}\label{prop1}
    If $u(t,x)$ is a solution of (\ref{hyp:eqn})-(\ref{hyp:eqn1b}) and $g(t,x)$ is as in
    definition (\ref{def:gauge:equiv}), then $v(t,x)=g(t,x)u(t,x)$ satisfies
     \begin{align}\label{eq:1.7}
          \HYPEQN{A'}{V'}{v} = 0 & \thickspace\text{in }\ERRE{}\times\Omega \\ \nonumber
          v=\partial_{t} v = 0 & \thickspace\text{for }t << 0 \\ \nonumber
          v = f g \big|_{\ERRE{}\times\partial\Omega} &
                      \thickspace\text{on } \ERRE{}\times\partial\Omega .
     \end{align}
    with $(\mathcal{A}',V')$ and $(\mathcal{A},V)$ gauge equivalent.\\
    In addition if $\Lambda'$ is the Dirichlet to Neumann operator associated to
    (\ref{eq:1.7}),
    then
    \begin{equation}
       \Lambda'\big(v |_{\ERRE{}\times\partial\Omega}\big) =
       g |_{\ERRE{}\times\partial\Omega}
       \Lambda\big(u |_{\ERRE{}\times\partial\Omega}\big)
    \end{equation}
    i.e., $\Lambda' = \Lambda$ since $g|_{\ERRE{}\times\partial\Omega}=1$.
    \end{prop}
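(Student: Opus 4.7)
The plan is a direct verification in three steps: (i) the interior PDE, (ii) the initial and Dirichlet conditions, and (iii) the transformation of the Dirichlet-to-Neumann operator. All three reduce to the product rule together with the algebraic identity defining $\mathcal{A}'$.

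The core of step (i) is that the gauge transform in Definition~\ref{def:gauge:equiv} has been arranged so that each first-order factor $(-i\partial_{x_{\mu}} + A_{\mu})$, $\mu = 0,\dots,n$ (with $x_{0} = t$), intertwines with multiplication by $g$; concretely, a one-line Leibniz-rule computation gives
\[
(-i\partial_{x_{\mu}} + A_{\mu})(g\,u) \;=\; g\,\bigl[(-i\partial_{x_{\mu}} + A'_{\mu})\,u\bigr],\qquad \mu = 0,1,\dots,n,
\]
which is exactly the content of the relation $A'_{\mu} = A_{\mu} - \tfrac{i}{g}\partial_{x_{\mu}}g$. Applying this identity twice yields the analogous formula for each squared factor, and summing with the signs prescribed in (\ref{hyp:eqn}) together with $V u = V' u$ (since $V' = V$) produces the operator conjugation relating the two hyperbolic operators on $g u$; since $u$ annihilates one of them, $v = g u$ annihilates the other, which is the PDE in (\ref{eq:1.7}).

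Step (ii) is immediate: from $v = g\,u$ and $\partial_{t} v = g_{t} u + g\,\partial_{t} u$ together with the vanishing of $u$ and $\partial_{t} u$ for $t \ll 0$, both $v$ and $\partial_{t} v$ vanish there. The lateral datum is tautological, $v\big|_{\ERRE{}\times\partial\Omega} = g\big|_{\ERRE{}\times\partial\Omega}\,f$. For step (iii) the decisive point is that $g \equiv 1$ on $\ERRE{}\times\partial\Omega$, so every tangential derivative of $g$ vanishes at the boundary; in particular $\nabla g = (\partial_{\nu} g)\,\nu$ at each boundary point. Substituting $v = g u$ and the definition of $A'$ into
\[
\Lambda'\bigl(v|_{\ERRE{}\times\partial\Omega}\bigr) = \bigl(\partial_{\nu} + iA'\!\cdot\!\nu\bigr)(g u)\big|_{\ERRE{}\times\partial\Omega},
\]
the $(\partial_{\nu} g)\,u$ term produced by differentiating the product $g u$ cancels exactly against the contribution of the $(\nabla g)/g$ correction in $A'\!\cdot\!\nu$, and what survives is $g|_{\partial\Omega}\bigl(\partial_{\nu} + iA\!\cdot\!\nu\bigr)u|_{\partial\Omega} = g|_{\partial\Omega}\,\Lambda(u|_{\partial\Omega})$. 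Because $g|_{\partial\Omega} \equiv 1$, this is the same as $\Lambda(u|_{\partial\Omega})$.

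The entire argument is an exercise in the product rule; there is no genuine analytic difficulty. The only place that calls for a bit of care is the sign bookkeeping in step (iii), and even that is forced, once one recognizes that only the normal derivative of $g$ survives on the boundary, by the very form of the gauge relation in Definition~\ref{def:gauge:equiv}.
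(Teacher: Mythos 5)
Your overall strategy is identical to the paper's (a direct Leibniz-rule verification of how the first-order factors intertwine with multiplication by $g$, followed by the boundary computation), but as written your argument contains a sign/direction error that makes two of your steps fail. In step (i) you take the intertwining in the form
\[
(-i\partial_{x_{\mu}} + A_{\mu})(gu) \;=\; g\bigl[(-i\partial_{x_{\mu}} + A'_{\mu})u\bigr],
\qquad A'_{\mu} = A_{\mu} - \tfrac{i}{g}\partial_{x_{\mu}}g .
\]
This identity is algebraically correct, but it is the wrong one for the proposition: squaring and summing it gives $L_{\mathcal{A},V}(gu) = g\,L_{\mathcal{A}',V'}u$, and from $L_{\mathcal{A},V}u=0$ this tells you nothing about $L_{\mathcal{A}',V'}(gu)$; what it actually proves is the converse implication, that if $u$ solves the \emph{primed} equation then $gu$ solves the \emph{unprimed} one. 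What the proposition needs is the conjugation with $g$ on the other side, namely $(-i\partial_{x_{\mu}} + A'_{\mu})(gu) = g(-i\partial_{x_{\mu}} + A_{\mu})u$, and a one-line computation shows this forces $A'_{\mu} = A_{\mu} + \tfrac{i}{g}\partial_{x_{\mu}}g$ --- which is the sign the paper's own proof uses (Definition \ref{def:gauge:equiv} and the proof of Proposition \ref{prop1} in the paper in fact disagree by exactly this sign; they are reconciled by replacing $g$ with $1/g$). The same sign propagates into your step (iii): with $A' = A - \tfrac{i}{g}\nabla g$ one gets $iA'\cdot\nu\,(gu) = iA\cdot\nu\, gu + (\partial_{\nu}g)u$, so the $(\partial_{\nu}g)u$ term produced by $\partial_{\nu}(gu)$ \emph{doubles} rather than cancels; the cancellation you describe occurs only with the plus sign.

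The repair is trivial --- adopt $A'_{\mu} = A_{\mu} + \tfrac{i}{g}\partial_{x_{\mu}}g$ throughout, or equivalently invoke the symmetry of gauge equivalence under $g\mapsto 1/g$ --- and with that change your three steps coincide with the paper's proof. Two minor remarks: your observation that only $\partial_{\nu}g$ survives on the boundary is correct but not needed, since the cancellation in step (iii) is pointwise and uses only $\nabla g\cdot\nu = \partial_{\nu}g$; and $g\equiv 1$ on $\ERRE{}\times\partial\Omega$ enters only at the very end, to pass from $\Lambda'(v|_{\ERRE{}\times\partial\Omega}) = g|_{\ERRE{}\times\partial\Omega}\Lambda(u|_{\ERRE{}\times\partial\Omega})$ to $\Lambda'=\Lambda$.
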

    \begin{proof}
    Setting $x_{0}= t$ we see that for $0\le j\le n$,
    \begin{equation*}
       \left( -i\partial_{x_{j}} + A'_{j}(t,x)\right) v(t,x) =
       g(t,x) \left( -i\partial_{x_{j}} + A'_{j}(t,x)
       -\frac{i}{g(t,x)}\partial_{x_{j}}g(t,x) \right) u(t,x).
    \end{equation*}
    Choosing $A'_{j}(t,x)$ so that $A'_{j}=A_{j} + \frac{i}{g}\partial_{x_{j}}g$
    for $0\le j \le n$, we get
    \begin{align*}
       \left( -i\partial_{x_{j}} + A'_{j}(t,x)\right)^{2}v(t,x)
       & = \left( -i\partial_{x_{j}} + A'_{j}(t,x)\right)\big( g(t,x)
           \left( -i\partial_{x_{j}} + A_{j}(t,x)\right)u(t,x) \big) \\
       & = g(t,x) \left( -i\partial_{x_{j}} + A_{j}(t,x)\right)^{2} u(t,x),
    \end{align*}
    thus
    \begin{align*}
    \HYPEQN{A'}{V'}{v} & = \\
    g(t,x)\Big(\HYPEQN{A}{V}{u}\Big) &  = 0
    \end{align*}
    as $u$ is a solution of (\ref{hyp:eqn}).
    Also notice that since $g$ is smooth and $u$ satisfies (\ref{hyp:eqn1a}) and
    (\ref{hyp:eqn1b}) we have for $t<<0$
    \begin{align*}
       v(t,x) & = u(t,x)g(t,x)= 0 \\
       \partial_{t}v(t,x) & = u(t,x)\partial_{t}g(t,x) + \partial_{t}u(t,x)
       g(t,x) = 0,
    \end{align*}
    similarly $v(t,x)\big|_{\ERRE{}\times\partial\Omega} =
    \big(g(t,x)u(t,x)\big)\big|_{\ERRE{}\times\partial\Omega}
    = fg\big|_{\ERRE{}\times\partial\Omega}$.
    To conclude we simply notice that
    \begin{align*}
       \Lambda'\left(v\big|_{\ERRE{}\times\partial\Omega}\right) & =
       \big(\partial_{\nu} (g u) + i A'\cdot \nu (g u )\big)
       \Big|_{\ERRE{}\times\partial\Omega}\\
       & = \Big(\left(\partial_{\nu} g \right)u + g\left(\partial_{\nu} u \right)
         + i \left(A + ig^{-1}\partial_{x}g \right)\cdot \nu (gu)\Big)
         \Big|_{\ERRE{}\times\partial\Omega}\\
       & = g \big(\partial_{\nu} u + i(A\cdot \nu)u\big)
             \Big|_{\ERRE{}\times\partial\Omega}
         + \big( (\partial_{\nu}g)u - (\partial_{\nu}g)u\big)
         \Big|_{\ERRE{}\times\partial\Omega} \\
       & = g\big|_{\ERRE{}\times\partial\Omega}
       \Lambda\left( u \big|_{\ERRE{}\times\partial\Omega}\right)
    \end{align*}
    \end{proof}
    If the above equality holds we shall say that the Dirichlet to
    Neumann maps $\Lambda$ and $\Lambda'$ are gauge equivalent.
    Summarizing, we have shown that if the vector and scalar potentials are gauge
    equivalent then the Dirichlet to Neumann maps are equal. In the following pages we
    shall attempt to prove the converse, roughly speaking: 
    \emph{If for a pair of vector and scalar potentials the Dirichlet to Neumann
    operators associated to the hyperbolic equation (\ref{hyp:eqn})-(\ref{hyp:eqn1b})
    are equal, then so are the vector and scalar potentials.}

\section{Geometric optics}\label{sec:go:solns}
    For the hyperbolic problem (\ref{hyp:eqn})-(\ref{hyp:eqn1b}) we shall attempt
    to construct geometric optics solutions supported near light rays. In order for
    us to achieve this goal we consider solutions 
    having the form
    \begin{equation}\label{geom:optic:sols}
    u(t,x) = e^{ik(t-\omega\cdot x)}
             \sum_{p=0}^{N}\frac{v_{p}(t,x)}{(2ik)^{p}}
            + v^{(N+1)}(t,x),\qquad \omega\in S^{n-1}, k\in\mathbb{R}.
    \end{equation}
    For $u$ as above we have
    \begin{equation}
      \left(-i\DT + A_{0}\right) u = e^{ik(t-\omega\cdot x)}
      \left( -i\DT + A_{0} + ik \right) \big( \sum_{p=0}^{N}\frac{v_{p}}{(2ik)^{p}}
            +  e^{-ik(t-\omega\cdot x)}v^{(N+1)} \big)
    \end{equation}
    applying the above identity twice to a solution of (\ref{hyp:eqn})
    we get
    \begin{equation*}
    \begin{split}
    \big(-i\DT + A_{0}\big)^{2} u
    & = \big(-i\DT + A_{0}\big)
      \Big( e^{ik(t-\omega\cdot x)} \big( -i\DT + A_{0} + ik \big)
      \big(\sum_{p=0}^{N} \frac{v_{p}}{(2ik)^{p}} \thickspace + \\
    & \qquad  e^{-ik(t-\omega\cdot x)}v^{(N+1)}\big) \Big) \\
    & = e^{ik(t-\omega\cdot x)} \big( -i\DT + A_{0} + ik \big)
        \big(-i\DT + A_{0} + ik\big)
        \Big(\sum_{p=0}^{N} \frac{v_{p}}{(2ik)^{p}} \thickspace + \\
    & \qquad e^{-ik(t-\omega\cdot x)}v^{(N+1)}\Big) 
    \end{split}
    \end{equation*}
    this is
    \begin{multline*}
      \big(-i\DT + A_{0}\big)^{2} u 
       = e^{ik(t-\omega\cdot x)} \Big( \big(-i\DT +A_{0}\big)^{2} + \\
          2ik\big(-i\DT + A_{0}\big) -k^{2} \Big)
          \Big(\sum_{p=0}^{N} \frac{v_{p}}{(2ik)^{p}} \thickspace + 
                 e^{-ik(t-\omega\cdot x)}v^{(N+1)}\Big).
    \end{multline*}
    Since a similar formula holds for  $1\le j\le n$, we obtain
    \begin{multline*}
         \big(-i\DXJ{j} + A_{j}\big)^{2} u 
	  = e^{ik(t-\omega\cdot x)} \Big( \big(-i\DXJ{j} +A_{j}\big)^{2} + \\
             2ik\omega_{j}\big(-i\DXJ{j} + A_{j}\big) - k^{2}\omega_{j}^{2} \Big)
             \Big(\sum_{p=0}^{N} \frac{v_{p}}{(2ik)^{p}} \thickspace + 
	         e^{-ik(t-\omega\cdot x)}v^{(N+1)}\Big),
    \end{multline*}
    thus equation (\ref{hyp:eqn}) becomes
    \begin{align}\nonumber
    0= L u 
           = & \thinspace e^{ik(t-\omega\cdot x)}
               \Big(\HYPOPP{A}{V}\Big)v \\ \nonumber
             & \thinspace + 2ik e^{ik(t-\omega\cdot x)}
               \Big(\TRANSPOPP{A}\Big)v \\ \nonumber
             & \thinspace + e^{ik(t-\omega\cdot x)}
               \Big(-k^{2} + \sum_{j=1}^{n}(\omega_{j}k)^{2}\Big)
                  v \\ \label{hyp:eqn:for:u1}
    0=L u = & \thinspace e^{ik(t-\omega\cdot x)} \big(L +2ik\mathcal{L} \big) v,
    \end{align}
    where we have set
    \begin{align}\label{geom:series:soln}
    v(t,x) & = \sum_{p=0}^{N} \frac{v_{p}(t,x)}{(2ik)^{p}} + e^{-ik(t-\omega\cdot x)}v^{(N+1)}(t,x) \\
    L & = \HYPOP{A}{V} \\
    \mathcal{L} & = \TRANSPOP{A}.
    \end{align}
    Plugging in the expression for $v$ into (\ref{hyp:eqn:for:u1}) we obtain
    \begin{equation}
    0 = \big( 2ik\mathcal{L} + L \big)
        \left( v_{0} + \frac{1}{(2ik)}v_{1}+\dots+\frac{1}{(2ik)^{N}}v_{N}
                   + e^{-ik(t-\omega\cdot x)} v^{(N+1)}\right),
    \end{equation}
    which in turn can be rewritten as
    \begin{align}\nonumber
    (2ik)\mathcal{L}v_{0} & + \big( \mathcal{L}v_{1} + L v_{0} \big) +
    \frac{1}{(2ik)}\big( \mathcal{L}v_{2} + L v_{1} \big) + \dots + \\
    & \frac{1}{(2ik)^{N-1}}\big( \mathcal{L}v_{N} + L v_{N-1} \big) +
    \frac{1}{(2ik)^{N}}L v_{N} + e^{-ik(t-\omega\cdot x)}L v^{(N+1)} = 0,
    \end{align}
    where we have used the identity $(2ik\mathcal{L} + L)\big( e^{-ik(t-\omega\cdot x)}v^{(N+1)}\big)
    =e^{-ik(t-\omega\cdot x)} L v^{(N+1)}$.

    Notice that a solution of (\ref{hyp:eqn})-(\ref{hyp:eqn1a})
    can be found by solving the $N+1$ transport equations
    \begin{equation}
      \mathcal{L} v_{0} = 0,\qquad \mathcal{L} v_{j}
                        = -L v_{j-1}, \qquad 1\le j \le N
    \end{equation}
    with initial conditions supported near a neighborhood of the light ray
    $\gamma = \{ (t^{\prime},x^{\prime}) + s(1,\omega)\thinspace : \thinspace
    (t^{\prime},x^{\prime}) \perp (1,\omega), s \in \ERRE{}\}$
    (we assume that $\gamma$ intersects the plane $t=T_{1}$ outside
    of the cylinder $\ERRE{}\times\Omega$;
    as well as the second order equation
    \begin{equation}\label{hyp:eqn:after:go:solns}
    L v^{(N+1)} = -\frac{e^{ik(t-\omega\cdot x)}}{(2ik)^{N+1}} L v_{N}
    \end{equation}
    with initial and boundary conditions
    \begin{align*}
     v^{(N+1)}(t,x) & = 0
                    & \text{for }t = T_{1} \quad \phantom{nx\in\partial\Omega}\\
     \partial_{t}v^{(N+1)}(t,x) & = 0
                    & \text{for }t = T_{1} \quad \phantom{nx\in \partial\Omega}\\
     v^{(N+1)}(t,x) & = 0
                    & \text{for }t \ge T_{1}, \quad x\in\partial\Omega.
     \end{align*}

    %
    The above differential equation has a unique solution. Moreover if $h$ is
    the right hand side of (\ref{hyp:eqn:after:go:solns}) we have
    (see for instance Isakov \cite{Isakov}, pp. 185) that if $T_{1}<t<T$
    and $k>1$
    \begin{align}\nonumber
     ||v^{(N+1)}(t,\thinspace)||_{L^{2}(\Omega)}
       & \le C
       ||h||_{L^{2}((T_{1},T)\times\Omega)} \\ \label{power:of:k}
       & \le \frac{C}{k^{N}}
    \end{align}
    Thus, we have shown that we can find a solution
    $u=e^{ik(t+\omega\cdot x)}(v_{0}+\mathcal{O}(k^{-1}))$
    of (\ref{hyp:eqn}) satisfying the set of initial conditions
    (\ref{hyp:eqn1a}). Let us now examine the first term
    in (\ref{geom:series:soln}) by solving the transport equation
    \begin{equation}\label{eqn:2.23}
       0 = \mathcal{L}v_{0}(t,x) = \sum_{j=0}^{n}\omega_{j}\partial_{x_{j}}v_{0}(t,x) +
       i \sum_{j=0}^{n}\omega_{j}A_{j}(t,x)v_{0}(t,x)
    \end{equation}
    where we have set $\omega_{0}=1$ and $\partial_{x_{0}}=\partial_{t}$.

    Equation (\ref{eqn:2.23}) is a first order transport equation that
    can be solved by the method of the characteristics or by performing
    a change of variables that turns the PDE into an ordinary differential
    equation. Either way, the solution we obtain is given by
    \begin{equation}\label{eqn:2.24}
       v_{0}(t,x)=\chi(t,x)
       \exp \left( -i
          \int_{-\infty}^{\frac{1}{2}(t+\omega\cdot x)} 
	       \sum_{j=0}^{n} \omega_{j}A_{j}(t^{\prime}+s,x^{\prime}+s\omega)
	       \thinspace\mathrm{d}s,
       \right)
    \end{equation}
    where $(t^{\prime},x^{\prime})=(t,x)-\frac{1}{2}(t+\omega\cdot x)(1,\omega)$ is
    the projection of $(t,x)$ into $\Pi_{(1,\omega)}$, the hyperplane perpendicular
    to $(1,\omega)$ and $\chi$ is any function that is constant
    along the direction given by $(1,\omega)$
    and whose support is contained in
    a neighborhood of the light ray $\gamma=\{
    (t',x') + s(1,w)\thinspace | \thinspace s\in\ERRE{}\}$ 
    (in general $\chi$ can be complex valued but for our 
    purposes we will assume it is real valued).

    Summarizing, we have been able to construct a solution of (\ref{hyp:eqn})-(\ref{hyp:eqn1a})
    having the form:
    \begin{equation}\label{GO1a}
    u(t,x) = e^{ik(t-\omega\cdot x)- i R_{1}(t,x;\omega)}
             \big( \chi(t',x') + \mathcal{O}(k^{-1}) \big),
    \end{equation}
    where
    \begin{equation}\label{GO1b}
    R_{1}(t,x;\omega) = \int^{\frac{1}{2}(t+\omega\cdot x)}_{-\infty}
                 \sum_{j=0}^{n}\omega_{j}A_{j}(t^{\prime}+s,x^{\prime}+s\omega)
             \thinspace\mathrm{d}s
    \end{equation}

    In a similar way one can obtain geometric optics solutions for the backwards hyperbolic
    problem
    \begin{align*}
      L v & = 0\qquad\text{in }(-\infty,T_{2})\times\Omega \\
      v = \DT v & =0\qquad\text{for }t = T_{2}
    \end{align*}

    In the following section we will derive a Green's Formula for these kinds
    of hyperbolic operators and will use the Geometric Optics representations
    to conclude that the Dirichlet to Neumann data determines the vectorial
    and scalar ray transforms of the potentials along \emph{`light rays'} (this
    is, rays that make a 45 degree angle with the hyperplane $t=0$).

\section{Green's formula}\label{sec:green:formula}

   This technique has had a lot of success in the context of inverse problems, 
   in particular for the case of elliptic problems, the fundamental paper of 
   Sylvester and Uhlmann \cite{Sylvester:Uhlmann} has been a source of 
   inspiration for several other uniqueness results (see also Isakov's review 
   paper \cite{Isakov:uniq:stab} for more information on this subject).
   
   For $T_{1}$ and $T_{2}$ two real numbers with $T_{1}<T_{2}$ we
   consider the forward and backward hyperbolic equations
   \begin{center}
   \begin{tabular}{r l c r l}
    $L_{1} u = 0$ &$\quad\text{in }\TT{}\times\Omega$ & $\qquad$&
    $L_{2}^{*} v = 0$ &$\quad\text{in }\TT{}\times\Omega$ \\
    $u = \DT u = 0$ &$\quad\text{for } t = T_{1} $ & $\quad$&
    $v = \DT v = 0$ &$\quad\text{for } t = T_{2} $ \\
    $u=f$           &$\quad\text{on }\TT{}\times\partial\Omega$ & $\quad$ &
    $v=g$           &$\quad\text{on }\TT{}\times\partial\Omega$,
   \end{tabular}
   \end{center}
   where we have set
   \begin{align*}
      L_{1} = L(\mathcal{A}^{(1)},V^{(1)})
      & = \big( -i\DT + A_{0}^{(1)}(t,x)\big)^{2} -
          \sum_{j=1}^{n} \big(-i\DXJ{j} + A_{j}^{(1)}(t,x) \big)^{2} +
          V^{(1)}(t,x) \\
      L_{2}^{*} = L(\overline{\mathcal{A}^{(2)}},\overline{V^{(2)}})
      & = \big( -i\DT + \overline{A_{0}^{(2)}(t,x)}\big)^{2} -
          \sum_{j=1}^{n} \big(-i\DXJ{j} + \overline{A_{j}^{(2)}(t,x)} \big)^{2} +
          \overline{V^{(2)}(t,x)}
   \end{align*}
   and let us assume that the Dirichlet to Neumann operators
   \begin{align}\label{DN1}
      \Lambda_{1}(f)
      & = \big(\partial_{\nu} + i\nu\cdot A^{(1)}(t,x)\big)u(t,x)\big|_{\TT{}
          \times\partial\Omega}\\ \label{DN2}
      \Lambda_{2}(g)
      & = \big(\partial_{\nu} + i\nu\cdot A^{(2)}(t,x)\big)v(t,x)\big|_{\TT{}
          \times\partial\Omega}
   \end{align}
   equal on $(T_{1},T_{2})\times\partial\Omega$, i.e., $\Lambda_{1}f = \Lambda_{2}f$
   for all $f$ smooth and supported on the set $(T_{1},T_{2})\times\partial\Omega$.

   \textbf{Remark:} \textit{Notice that for the operator $L^{*}_{2}$ we associate the
   Dirichlet to Neumann map
   \begin{equation*}
      \Lambda_{2}^{*}(g)
       = \big(\partial_{\nu} + i\nu\cdot \overline{A^{(2)}(t,x)}\big)v(t,x)\big|_{\ERRE{}
          \times\partial\Omega}
   \end{equation*}
   and that our main assumption is $\Lambda_{1} = \Lambda_{2}$ on
   $\ERRE{}\times\partial\Omega$. This is no mistake as later on we will
   show that our notation is justified as the $\mathrm{L}^{2}$ adjoint
   of $\Lambda_{2}$ is indeed $\Lambda^{*}_{2}$.}

   Denoting by $\langle\text{ , }\rangle_{\TT{}\times\Omega}$,
   $\langle\text{ , }\rangle_{\Omega}$ and
   $\langle\text{ , }\rangle_{\TT{}\times\partial\Omega}$ the $L^{2}$ inner products in
   $\TT{}\times\Omega$, $\Omega$ and $\TT{}\times\partial\Omega$ respectively we obtain the
   following integration by parts formulas for $A^{(1)}_{0}$
   \begin{align*}
     \big\langle\big( -i\DT + A_{0}^{(1)}\big)^{2}u,v
                 \big\rangle _{[T_{1},T_{2}]\times\Omega} =
     & \thinspace\big\langle\big( -i\DT+A_{0}^{(1)}\big)u, \big(-i\DT+
                  \overline{A_{0}^{(1)}}\big)v 
                 \big\rangle_{[T_{1},T_{2}]\times\Omega} \\
     & \thinspace - i\big\langle\big(-i\DT+A_{0}^{(1)}\big)
                  u(t,\cdot), v(t,\cdot)\big\rangle_{\Omega}\Big|_{T_{1}}^{T_{2}}
   \end{align*}
   where $\nu=(\nu^{(1)},\dots,\nu^{(n)})$ is the exterior unit normal to $\partial\Omega$
   and $u$, $v$ are solutions of the forward and backward hyperbolic equations respectively.

   In view of the initial conditions we obtain
   \begin{equation}\label{eqn:3.3}
     \big\langle\big( -i\DT + A_{0}^{(1)}\big)^{2}u, v
     \big\rangle _{[T_{1},T_{2}]\times\Omega} =
     \big\langle\big(-i\DT+A_{0}^{(1)}\big)u, \big(-i\DT
                   +\overline{A_{0}^{(1)}}\big)v
     \big\rangle _{[T_{1},T_{2}]\times\Omega}.
   \end{equation}
   Also for $j=1,\dots,n$ we have for $A^{(1)}_{j}$
   \begin{align*}
     \big\langle\big( -i\DXJ{j} + A_{j}^{(1)}\big)^{2}u, v
     \big\rangle _{[T_{1},T_{2}]\times\Omega} =
     & \thinspace
       \big\langle\big( -i\DXJ{j}+A_{j}^{(1)}\big)u, \big( -i\DXJ{j}
                          +\overline{A_{j}^{(1)}} \big)v 
       \big\rangle _{[T_{1},T_{2}]\times\Omega} \\
     & \thinspace - i
       \big\langle\big( -i\DXJ{j}+A_{j}^{(1)} \big)
                          u\nu^{(j)}, v
       \big\rangle_{\TT{}\times\partial\Omega},
   \end{align*}
   hence
   \begin{multline}
     \sum_{j=1}^{n} \big\langle\big( -i\DXJ{j} + A_{j}^{(1)} \big)^{2}u,v
                    \big\rangle_{\TT{}\times\Omega}  = \\
      \sum_{j=1}^{n}\big\langle\big( -i\DXJ{j}+A_{j}^{(1)}\big)u, \big( -i\DXJ{j}
                    +\overline{A_{j}^{(1)}} \big) v 
                    \big\rangle_{\TT{}\times\Omega} \\
      - \ELEDOT{\Lambda_{1}\left(f\right)}{g}_{\TT{}\times\partial\Omega},
   \end{multline}
   where $f=u|_{\TT\times\partial\Omega}$ and
   $g=v|_{\TT\times\partial\Omega}$.

   Similarly for 
   $L_{(2)}^{*}$ we have
   \begin{equation}
     \big\langle u, \big( -i\DT + \overline{A_{0}^{(2)}} \big)^{2}v 
     \big\rangle_{\TT{}\times\Omega}  = 
     \big\langle\big( -i\DT+A_{0}^{(2)} \big)u, \big(-i\DT+\overline{A_{0}^{(2)}} \big)v
     \big\rangle_{\TT{}\times\Omega} 
   \end{equation}
   and
   \begin{multline}\label{eqn:3.6}
     \sum_{j=1}^{n}
          \big\langle u, \big( -i\DXJ{j} + \overline{A_{j}^{(2)}} \big)^{2}v 
          \big\rangle_{\TT{}\times\Omega}  = \\
     \sum_{j=1}^{n}
          \big\langle\big( -i\DXJ{j}+A_{j}^{(2)} \big)u, \big( -i\DXJ{j}
                    +\overline{A_{j}^{(2)}} \big)v 
          \big\rangle_{\TT{}\times\Omega} \\
      - \ELEDOT{f}{\Lambda_{2}^{*}\left(g\right)}_{\TT{}\times\partial\Omega}.
   \end{multline}

   Combining expressions (\ref{eqn:3.3})-(\ref{eqn:3.6}) and recalling that $u$ and
   $v$ are solutions of the forward and backward hyperbolic problem respectively we obtain
   \begin{align}\nonumber
      0= &\thinspace
      \ELEDOTT{L_{1}u}{v} - \ELEDOTT{u}{L_{2}^{*}v} \\ \nonumber
      = & \thinspace
      \underbrace{
        \ELEDOTT{\big(-i\DT+A_{0}^{(1)}\big)u}{\big(-i\DT+\overline{A_{0}^{(1)}}\big)v}
      }_{I_{1}} \\ \nonumber
      & \thinspace -
      \underbrace{
        \ELEDOTT{\big(-i\DT+A_{0}^{(2)}\big)u}{\big(-i\DT+\overline{A_{0}^{(2)}}\big)v}
      }_{I_{2}} \\ \nonumber
      & \thinspace -
      \underbrace{ \sum_{j=1}^{n}
        \ELEDOTT{\big(-i\DXJ{j}+A_{j}^{(1)}\big)u}{\big(-i\DXJ{j}
                 +\overline{A_{j}^{(1)}}\big)v}
      }_{I_{3}} \\ \nonumber
      & \thinspace
      +
      \underbrace{ \sum_{j=1}^{n}
        \ELEDOTT{\big(-i\DXJ{j}+A_{j}^{(2)}\big)u}{\big(-i\DXJ{j}
                 +\overline{A_{j}^{(2)}}\big)v}
      }_{I_{4}} \\ \nonumber
      & \thinspace
      + \ELEDOTT{V^{(1)}u}{v} - \ELEDOTT{V^{(2)}u}{v} \\ \label{eqn:3.7}
      & \thinspace
      + \ELEDOT{f}{\Lambda_{2}^{*}\big(g\big)}_{\TT{}\times\partial\Omega}
         - \ELEDOT{\Lambda_{1}\big(f\big)}{g}_{\TT{}\times\partial\Omega}
   \end{align}
   Let us now study the terms $I_{1}$, $I_{2}$, $I_{3}$ and $I_{4}$ 
   appearing in the above formula.
   For $I_{1}$ and $I_{2}$ we have
   \begin{align*}
   I_{1} + I_{2} =
     & \thinspace
        \ELEDOTT{\big(-i\DT+A_{0}^{(1)}\big)u}{\big(-i\DT+\overline{A_{0}^{(1)}}\big)v} \\
      - & \ELEDOTT{\big(-i\DT+A_{0}^{(2)}\big)u}{\big(-i\DT+\overline{A_{0}^{(2)}}\big)v} \\
   = & \thinspace
          \Big\langle -i\DT u,-i\DT v\Big\rangle_{\ERRE{}\times\Omega}
        + \ELEDOTT{-i\DT u}{\overline{A_{0}^{(1)}}v}
        + \ELEDOTT{A_{0}^{(1)}u}{-i\DT v} \\
      + & \ELEDOTT{A_{0}^{(1)}u}{\overline{A_{0}^{(1)}}v} 
        - \Big\langle -i\DT u,-i\DT v \Big\rangle_{\ERRE{}\times\Omega}
        - \ELEDOTT{-i\DT u}{\overline{A_{0}^{(2)}}v} \\
      - & \ELEDOTT{A_{0}^{(2)}u}{-i\DT v}
        - \ELEDOTT{A_{0}^{(2)}u}{\overline{A_{0}^{(2)}}v},
   \end{align*}
   this is
   \begin{align}\nonumber
   I_{1} + I_{2} =
   & \thinspace
      \ELEDOTT{\big(A_{0}^{(1)}-A_{0}^{(2)}\big)u}{-i\DT v}
        + \ELEDOTT{\big(A_{0}^{(1)}-A_{0}^{(2)}\big)
                      \big(-i\DT u\big)}{v} \\ \label{eqn:3.16}
   & \thinspace
        + \ELEDOTT{\big(\big(A_{0}^{(1)}\big)^{2}
                 -\big(A_{0}^{(2)}\big)^{2}\big)u}{v}
  \end{align}
   and a similar computation shows that
   \begin{multline} \label{eqn:3.17}
   I_{2} + I_{3} =
    \thinspace \sum_{j=1}^{n} 
      \ELEDOTT{\big(A_{j}^{(1)}-A_{j}^{(2)}\big)u}{-i\DXJ{j} v} \\
    + \thinspace \sum_{j=1}^{n} 
         \ELEDOTT{\big(A_{j}^{(1)}-A_{j}^{(2)}\big)
               \big(-i\DXJ{j} u\big)}{v} \\
    \thinspace + \sum_{j=1}^{n}
          \ELEDOTT{\big( (A_{j}^{(1)} )^{2}
                 - (A_{j}^{(2)} )^{2}\big)u}{v} .
  \end{multline}

   Combining (\ref{eqn:3.7}), (\ref{eqn:3.16}) and (\ref{eqn:3.17}) we obtain
   the Green's formula:
   \begin{multline}\label{eqn:GF}
           \ELEDOT{\Lambda_{1}\big(f\big)}{g}_{\TT{}\times\partial\Omega}
         - \ELEDOT{f}{\Lambda_{2}^{*}\big(g\big)}_{\TT{}\times\partial\Omega}
         = \\
           \sum_{j=0}^{n} r_{j} \Big(
           \ELEDOTT{A_{j}u}{(-i\DXJ{j} v)} 
	  + \ELEDOTT{A_{j}(-i\DXJ{j} u)}{v}
          \Big) \\ 
           + \sum_{j=0}^{n} r_{j}
           \ELEDOTT{\big( (A_{j}^{(2)} )^{2}
          - (A_{j}^{(1)} )^{2}\big)u}{v} 
       - \ELEDOTT{Vu}{v}
   \end{multline}
   where we have set $x_{0}=t$, $A_{j}=A_{j}^{(2)}-A_{j}^{(1)}$ for
   $0\le j\le n$, $V=V^{(2)}-V^{(1)}$, $r_{0}=-1$ and $r_{j}=1$ for $1\le j\le n$.

   At this point it is convenient to notice that if we take the vector and scalar
   potentials in the forward and backward hyperbolic equation to be the same
   $\big($i.e., $(\mathcal{A}^{(1)},V^{(1)})=(\mathcal{A}^{(2)},V^{(2)})\big)$, then we
   get from (\ref{eqn:3.7})
   \begin{equation*}
     \ELEDOT{\Lambda(f)}{g}_{\TT{}\times\partial\Omega}
   - \ELEDOT{f}{\Lambda^{*}(g)}_{\TT{}\times\partial\Omega} = 0
   \end{equation*}
   proving that $\Lambda^{*} = \partial_{\nu} + i \nu\cdot\overline{A(t,x)}$ is the
   $\mathrm{L}^{2}$ adjoint of $\Lambda = \partial_{\nu} + i\nu\cdot A(t,x)$.

   Since we are assuming that the Dirichlet to Neumann maps for the forward and backward
   hyperbolic equations agree ($\Lambda^{1} = \Lambda^{2}$ on
   $\ERRE{}\times\partial\Omega$), then equation (\ref{eqn:GF}) can be rewritten as
   \begin{align}\nonumber
       0  = & \thinspace \ELEDOT{\Lambda_{1}\big(f\big)}{g}_{\TT{}\times\partial\Omega}
           - \ELEDOT{\Lambda_{2}\big(f\big)}{g}_{\TT{}\times\partial\Omega} \\ \nonumber
           = & \ELEDOT{\Lambda_{1}\big(f\big)}{g}_{\TT{}\times\partial\Omega}
           - \ELEDOT{f}{\Lambda_{2}^{*}\big(g\big)}_{\TT{}\times\partial\Omega} \\ \nonumber
          = & \thinspace \sum_{j=0}^{n} r_{j}
      \Big(
         \ELEDOTT{A_{j}u}{ (-i\DXJ{j} v )}
           + \ELEDOTT{A_{j} (-i\DXJ{j} u )}{v}
       \Big) \\ \label{green:for:scalar:pot}
            & + \thinspace \sum_{j=0}^{n} r_{j}
          \ELEDOTT{\big( (A_{j}^{(2)} )^{2}- (A_{j}^{(1)} )^{2}\big)u}{v}
       - \Big\langle Vu,v \Big\rangle_{\TT{}\times\Omega}
   \end{align}
   where as before $x_{0}=t$, $A_{j}=A_{j}^{(2)}-A_{j}^{(1)}$ for $0\le j\le n$,
   $V=V^{(2)}-V^{(1)}$, $r_{0}=-1$ and $r_{j}=1$ for $1\le j \le n$.

   \section{X-ray transform}
  
   Our next step is to combine our two main tools, namely the geometric optics representation
   of the solutions of the forward and backward hyperbolic equations and the Green's formula.

   Owing to (\ref{GO1a}) and (\ref{GO1b}) we can write geometric optics representations for $u$
   and $v$ the solutions of the forward and backward hyperbolic equation respectively. These
   representations are
    \begin{align}\label{geom:optic:1a}
    u(t,x) & = e^{ik(t-\omega\cdot x)- i R_{1}(t,x;\omega)}
             \Big( \chi(t',x') + \mathcal{O}\left(k^{-1}\right) \Big) \\ \label{geom:optic:1b}
    \overline{v(t,x)} & =
               e^{-ik(t-\omega\cdot x)+ i \overline{R_{2}(t,x;\omega)}}
             \Big( \chi(t',x') + \mathcal{O}\left(k^{-1}\right) \Big),
    \end{align}
    where
    \begin{align}\label{geom:op:r1}
    R_{1}(t,x;\omega)
       & = \int^{\frac{1}{2}(t+\omega\cdot x)}_{-\infty}
         \sum_{j=0}^{n} \omega_{j}
         A^{(1)}_{j}(t^{\prime}+s,x^{\prime}+s\omega)
                \thinspace \mathrm{d}s \\ \label{geom:op:r2}
    \overline{R_{2}(t,x;\omega)}
       & = \int^{\frac{1}{2}(t+\omega\cdot x)}_{-\infty}
         \sum_{j=0}^{n} \omega_{j}
         A^{(2)}_{j}(t^{\prime}+s,x^{\prime}+s\omega)
         \thinspace \mathrm{d}s.
    \end{align}

   Notice that for $u$ as in (\ref{geom:optic:1a}), we have for $0\le j\le n$
   \begin{align*}
      \DXJ{j} u
      & = e^{ik(t-\omega\cdot x)- i R_{1}(t,x;\omega)}
             \Big( \DXJ{j}\chi + \mathcal{O}\left(k^{-1}\right)
                    + \left(-ikr_{j}\omega_{j} - i\DXJ{j}R_{1}\right)
                    \left(\chi+\mathcal{O}\left(k^{-1}\right)\right)
             \Big) \\
      & = ke^{ik(t-\omega\cdot x)- i R_{1}(t,x;\omega)}
             \Big(-ir_{j}\omega_{j} \chi + \mathcal{O}(k^{-1})
             \Big),
   \end{align*}
   where we have set $\omega_{0}=1$ and as before $r_{0}=-1$ and $r_{j}=1$ for $1\le j \le n$. Then owing to
   (\ref{geom:optic:1b}) we obtain
   \begin{equation*}
     \left(-i\DXJ{j}u(t,x)\right)\overline{v(t,x)}
     = -ke^{i\left(\overline{R_{2}(t,x;\omega)}-R_{1}(t,x;\omega)\right)}
       \left( r_{j}\omega_{j}\chi(t^{\prime},x^{\prime})^{2} + \mathcal{O}(k^{-1}) \right)
   \end{equation*}
   similarly
   \begin{equation*}
     u(t,x)\overline{\left(-i\DXJ{j}v(t,x)\right)}
     = -ke^{i\left(\overline{R_{2}(t,x;\omega)}-R_{1}(t,x;\omega)\right)}
       \left( r_{j}\omega_{j}\chi(t^{\prime},x^{\prime})^{2} + \mathcal{O}(k^{-1}) \right)
   \end{equation*}

   Thus, Green's formula now reads
   \begin{multline*}
   0=Ck \int_{T_{1}}^{T_{2}}\int_{\Omega} \sum_{j=0}^{n}
       \big( A^{(2)}_{j}(t,x) - A^{(1)}_{j}(t,x)\big) r^{2}_{j}\omega_{j}\chi^{2}(t',x')
             \thickspace \times \\ 
        e^{i\left(\overline{R_{2}(t,x;\omega)}-R_{1}(t,x;\omega)\right)} \thinspace dx\thinspace dt
       + \cdots
   \end{multline*}
   where $C$ is a (negative) constant and ``$\cdots$'' represents terms of order $\mathcal{O}(1)$.
   Dividing the above expression by $Ck$ and taking the limit as $k\to +\infty$ 
   we get
   \begin{equation*}
   0= \int_{T_{1}}^{T_{2}}\int_{\Omega} \sum_{j=0}^{n}\omega_{j}
       \left( A^{(2)}_{j}(t,x) - {A}^{(1)}_{j}(t,x)\right) \chi^{2}(t',x')
              e^{i\left(\overline{R_{2}(t,x;\omega)}-R_{1}(t,x;\omega)\right)} \thinspace dx\thinspace dt.
   \end{equation*}
   Without loss of generality (cf. Remark 3.1 in \cite{Eskin:obstacles}) we can assume 
   that $\mathrm{supp}\thinspace \mathcal{A}^{(j)} \subset \ERRE{}\times\Omega$, $j=1,2$.
   Writing $X^{\prime}=(t^{\prime},x^{\prime})$
   and setting $\mathcal{A} = (A_{0},\dots,A_{n})=\mathcal{A}^{(2)}-\mathcal{A}^{(1)}$
   we get after the change of variables
   $(t,x)= \sigma(1,\omega)+X^{\prime}$
   \begin{equation*}
   0=  \int_{\Pi_{(1,\omega)}}\int_{-\infty}^{\infty}\sum_{j=0}^{n} \omega_{j}
             A_{j}\left(X^{\prime} + \sigma(1,\omega) \right) \chi^{2}\left(X^{\prime}\right)
              e^{i \int_{-\infty}^{\sigma}
                      \sum_{j=0}^{n}\omega_{j} A_{j}\left( X^{\prime} + s(1,\omega) \right)
                   \thinspace \mathrm{d}s
                }
       \thinspace \mathrm{d}\sigma\thinspace \mathrm{d}S_{X^{\prime}}.
   \end{equation*}

   Since $\chi$ is an arbitrary function of $X^{\prime}$ we then conclude that
   \begin{align}\nonumber
   0 & = \int_{-\infty}^{\infty} \sum_{j=0}^{n}\omega_{j}
             A_{j}\left(X^{\prime} + \sigma(1,\omega) \right)
              e^{i \int_{-\infty}^{\sigma}
                      \sum_{j=0}^{n}\omega_{j}A_{j}\left( X^{\prime} + s(1,\omega) \right)
                   \thinspace \mathrm{d}s
                }
       \thinspace \mathrm{d}\sigma \\ \nonumber
     & = -i \int_{-\infty}^{\infty} \frac{\partial}{\partial\sigma}
                \left(e^{i \int_{-\infty}^{\sigma}
                      \sum_{j=0}^{n}\omega_{j}A_{j}\left(X^{\prime} + s(1,\omega) \right)
                    \thinspace \mathrm{d}s}
                \right)
       \thinspace d\sigma \\ \label{eqn:3.18}
     & = -i \left(e^{i \int_{-\infty}^{\infty}
                      \sum_{j=0}^{n}\omega_{j}A_{j}\left(X^{\prime} + s(1,\omega) \right)
                    \thinspace \mathrm{d}s}
            -1 \right)
   \end{align}
   Summarizing we have proven the following

   \begin{lemma}\label{lemma:4.1}
      Suppose that the Dirichlet to Neumann operators $\Lambda_{1}$ and $\Lambda_{2}$
      for the hyperbolic equations
      \begin{equation*}
      L_{k} u = \Big(\big( -i\DT + {A^{(l)}}_{0}(t,x)\big)^{2}
                            - \sum_{j=1}^{n} \big(-i\DXJ{j} + {A^{(l)}}_{j}(t,x) \big)^{2}
                            + V^{(l)}(t,x) \Big) u = 0, \thickspace k=1,2
      \end{equation*}
      equal on $\TT{}\times\partial\Omega$. Then for any light ray
      \begin{equation*}
      \gamma
      =\{ (t',x')+s(1,\omega)\thinspace : s\in\mathbb{R}, \omega\in S^{n-1} ,(t',x')\cdot(1,\omega)=0 \},
      \end{equation*}
      the vectorial ray transform of $\mathcal{A} = (A^{(2)}_{0}-A^{(1)}_{0},\dots,
      A^{(2)}_{n}-A^{(1)}_{n})$ along $\gamma$ is an integer multiple of $2\pi$. This is
      \begin{equation}\label{eqn:3.19}
        \left(\mathcal{P}\mathcal{A}\right)(t,x;\omega) : =
        \int_{-\infty}^{\infty}
            \sum_{j=0}^{n}\omega_{j}A_{j}(t'+s,x'+s\omega)\thinspace
        \mathrm{d}s = 2\pi r
      \end{equation}
      for some $r\in\mathbb{Z}$. Here $(t',x') = (t,x) -
      \frac{1}{2}(t+\omega\cdot x)(1,\omega)$ and $\omega_{0}=1$.
   \end{lemma}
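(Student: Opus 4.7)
The plan is to take the Green's formula (\ref{green:for:scalar:pot}) and insert the geometric optics representations constructed in Section \ref{sec:go:solns}, then extract the leading-order asymptotics as $k\to\infty$. First I would let $u$ be the forward GO solution for $L_{1}$ with phase $e^{ik(t-\omega\cdot x)}$ and amplitude $\chi(t',x')e^{-iR_{1}(t,x;\omega)}+O(k^{-1})$, and symmetrically $v$ the backward GO solution for $L_{2}^{*}$ with conjugate phase and amplitude $\chi(t',x')e^{-i\overline{R_{2}(t,x;\omega)}}+O(k^{-1})$ as in (\ref{geom:optic:1a})--(\ref{geom:op:r2}). The key computation is that $-i\partial_{x_{j}}u$ produces a factor of $kr_{j}\omega_{j}$ from differentiating the phase (the amplitude derivatives and the $R_{1}$-derivatives are only $O(1)$), so each bilinear pairing in (\ref{green:for:scalar:pot}) becomes $k\cdot r_{j}\omega_{j}\chi^{2}\exp\bigl(i(\overline{R_{2}}-R_{1})\bigr)$ up to $O(1)$. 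The $r_{j}^{2}=1$ factor collapses the alternating signs, while the quadratic-in-$A_{j}$ terms and the $V u \bar v$ term are uniformly $O(1)$.

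Next I would divide both sides of (\ref{green:for:scalar:pot}) by $k$ and send $k\to\infty$; since $\Lambda_{1}=\Lambda_{2}$ the left side is identically zero, so the surviving identity is
\begin{equation*}
0=\int_{T_{1}}^{T_{2}}\!\!\int_{\Omega}\sum_{j=0}^{n}\omega_{j}\bigl(A_{j}^{(2)}-A_{j}^{(1)}\bigr)(t,x)\,\chi^{2}(t',x')\,e^{i(\overline{R_{2}}-R_{1})(t,x;\omega)}\,dx\,dt.
\end{equation*}
Performing the change of variables $(t,x)=X'+\sigma(1,\omega)$ with $X'\in\Pi_{(1,\omega)}$ splits the integral into the transverse integral against $\chi^{2}(X')$ and an inner integral in $\sigma$. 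Because $\chi$ is an arbitrary smooth cutoff supported near any prescribed base point $X'$, the inner integrand along the light ray must vanish for every $X'$.

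The final observation, and what makes the argument clean rather than only producing a vanishing integral, is that the exponent $\overline{R_{2}}-R_{1}$ evaluated on the ray $X'+\sigma(1,\omega)$ equals $\int_{-\infty}^{\sigma}\sum_{j}\omega_{j}A_{j}(X'+s(1,\omega))\,ds$ with $\mathcal{A}=\mathcal{A}^{(2)}-\mathcal{A}^{(1)}$, so the integrand is precisely
\begin{equation*}
\frac{1}{i}\frac{\partial}{\partial\sigma}\exp\!\left(i\int_{-\infty}^{\sigma}\sum_{j=0}^{n}\omega_{j}A_{j}(X'+s(1,\omega))\,ds\right).
\end{equation*}
Integrating in $\sigma$ over $\mathbb{R}$ and using that the exponential equals $1$ at $\sigma=-\infty$ (the potentials vanish for $|x|>R$, and by the remark after (\ref{eqn:3.18}) we may assume $\operatorname{supp}\mathcal{A}^{(l)}\subset\mathbb{R}\times\Omega$), we obtain $e^{i(\mathcal{P}\mathcal{A})(t,x;\omega)}-1=0$, which forces $(\mathcal{P}\mathcal{A})(t,x;\omega)\in 2\pi\mathbb{Z}$ as claimed.

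The main obstacle is bookkeeping the asymptotic expansion: I need to verify that the remainder $v^{(N+1)}$ from (\ref{power:of:k}) together with the amplitude-derivative terms contributes only $O(1)$ after the $k$-multiplication coming from differentiating the oscillatory phase, so that dividing by $k$ and letting $k\to\infty$ really isolates the leading symbol. Once that is in hand, the passage from a vanishing oscillatory integral to the pointwise identity (via arbitrariness of $\chi$) and the recognition of the integrand as an exact $\sigma$-derivative are essentially formal.
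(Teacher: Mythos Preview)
Your proposal is correct and follows essentially the same route as the paper: insert the geometric optics solutions (\ref{geom:optic:1a})--(\ref{geom:op:r2}) into the Green's formula (\ref{green:for:scalar:pot}), divide by $k$, pass to the limit, change variables to $\Pi_{(1,\omega)}\times\mathbb{R}_{\sigma}$, use the arbitrariness of $\chi$, and recognise the $\sigma$-integrand as the exact derivative $-i\partial_{\sigma}\exp(i\int_{-\infty}^{\sigma}\cdots)$ to obtain $e^{i\mathcal{P}\mathcal{A}}=1$. The only point you flag as an obstacle---that the remainder and amplitude-derivative contributions stay $O(1)$ after the $k$-gain from phase differentiation---is exactly what the paper handles via the estimate (\ref{power:of:k}) and the explicit $O(k^{-1})$ bookkeeping preceding (\ref{eqn:3.18}).
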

   \begin{proof}
   Equation (\ref{eqn:3.18}) can be rewritten as
   \begin{equation*}
      e^{i\left(\mathcal{P}\mathcal{A}\right)(t,x;\omega)} = 1
   \end{equation*}
   which in turn implies (\ref{eqn:3.19}).
   \end{proof}

   If we now incorporate the hypothesis  of $\mathcal{A}^{(1)}$ and
   $\mathcal{A}^{(2)}$ being compactly supported in $x$ we can determine
   the exact value of $r$. This is because equation (\ref{eqn:3.19})
   holds for any $(t,x;\omega)\in \mathbb{R}_{t,x}^{m+1}\times S^{m-1}$
   and in particular, when $t=0$ and $|x|$ is big enough and perpendicular
   to a fixed $\omega$, the light ray $(0,x)+s(1,\omega)$, $s\in\mathbb{R}$
   does not meet the support of $\mathcal{A}$, hence
    \begin{equation}\label{eqn:3.20}
    \int_{-\infty}^{\infty}
    \sum_{j=0}^{m}\omega_{j}A_{j}(t^{\prime}+s,x^{\prime}+s\omega) = 0.
    \end{equation}

   To conclude this section let us proceed to remove the condition
   $(t^{\prime},x^{\prime})\cdot(1,\omega)=0$. If $(t,x)$ is an arbitrary
   point in $\mathbb{R}^{n+1}_{t,x}$, then by making the change
   of variables $s=\sigma - \frac{t+\omega\cdot x}{2}$ we get
   \begin{equation*}
   \int_{-\infty}^{\infty} \sum_{j=0}^{n}\omega_{j}A_{j}(t+\sigma,x+\sigma\omega)
        \thinspace\mathrm{d}\sigma =
   \int_{-\infty}^{\infty} \sum_{j=0}^{n}\omega_{j}A_{j}(t^{\prime}+s,x^{\prime}+s\omega)
        \thinspace\mathrm{d}s,
   \end{equation*}
   where $(t^{\prime},x^{\prime})= (t,x) - \frac{(t+\omega\cdot x)}{2}(1,\omega)$.
   Clearly this last integral equals zero by (\ref{eqn:3.20}).

   \section{The main theorem}\label{sec:proofs:part1}

   In this section we will establish several uniqueness results for
   vector and scalar potentials satisfying different growth conditions.
   Let us proceed first with the part that deals with the vector potentials
   in the case when the component functions $A_{j}(t,x)$ decay exponentially
   in $t$.

    \begin{thm}\label{prop:uniq}
       Suppose that $\mathcal{A}(t,x) =
       \big(A_{0}(t,x),\dots,A_{n}(t,x) \big)$ 
       with $\mathcal{A} \in C^{\infty}$ in $x$ and $t$ is such that for
       any non-negative integers $\alpha$, $\beta$ and for any $ 0 \le j \le n $  
       there exist positive constants $c$, $C_{\alpha,\beta}$ such that 
       for $|t| \ge t_{0}$,
       $| \partial^{\alpha}_{t} \partial^{\beta}_{x} A_{j}(t,x) | \le 
       C_{\alpha,\beta}e^{-c |t|}$. 
       If in addition $A_{j}(t,x) = 0$ for
       $|x| \ge R > 0$ and (\ref{eqn:3.20}) holds, then there exists 
       $\varphi(t,x)\in C^{\infty}(t,x)$ and positive constants 
       $C^{\prime}_{\alpha,\beta},c'$ such that
       \begin{itemize}
       \item[\textit{i)}] $A_{0}(t,x) = \DT \varphi(t,x),\quad$ $A_{j}(t,x)
                                      = \DXJ{j}\varphi(t,x),\quad$
                          $1\le j \le n$, and
       \item[\textit{ii)}] $\mathrm{Supp}\thinspace\varphi \subseteq
                            \mathbb{R} \times \{ |x| \le R \},\quad$
			    $|\partial^{\alpha}_{t}\partial^{\beta}_{x}\varphi(t,x)| 
			     \le C^{\prime}_{\alpha,\beta}e^{-c'|t|}$.
       \end{itemize}
    \end{thm}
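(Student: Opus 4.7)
The plan is to derive from the vanishing light-ray transform (\ref{eqn:3.20}) that $\mathcal{A}$ is curl-free on $\mathbb{R}^{n+1}$, i.e.\ $\partial_{x_{k}}A_{j} = \partial_{x_{j}}A_{k}$ for $0\le j,k\le n$ (with the convention $x_{0}=t$). Once curl-freeness is established, I would construct $\varphi$ explicitly by a line integral in the $x_{1}$-direction, and deduce the compact $x$-support and exponential $t$-decay of $\varphi$ by combining the curl-free identity with the hypothesized properties of $\mathcal{A}$.

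For the curl-free step, I would take the Fourier transform of (\ref{eqn:3.20}) in $(t,x)$. Under the hypotheses of compact $x$-support and exponential $t$-decay, each $\hat{A}_{j}(\tau,\xi)$ is entire of exponential type in $\xi$ and holomorphic in $\tau$ in a strip $|\Im\tau|<c$, hence real-analytic on $\mathbb{R}^{n+1}$. After the substitution $u=t+\sigma$, $v=x+\sigma\omega$, the Fourier transform of the left-hand side of (\ref{eqn:3.20}) factors as
\[
2\pi\delta(\tau+\omega\cdot\xi)\,\bigl(\hat{A}_{0}(\tau,\xi)+\omega\cdot\hat{A}(\tau,\xi)\bigr),
\]
so (\ref{eqn:3.20}) is equivalent to $\hat{A}_{0}(\tau,\xi)+\omega\cdot\hat{A}(\tau,\xi)=0$ on $\{\tau+\omega\cdot\xi=0\}$ for every $\omega\in S^{n-1}$. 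Fixing $(\tau,\xi)$ with $|\tau|<|\xi|$, the admissible $\omega$ form a codimension-one subsphere of $S^{n-1}$; varying $\omega$ along it forces $\hat{A}(\tau,\xi)$ to be parallel to $\xi$, so there is a function $\lambda(\tau,\xi)$ with $\hat{A}_{j}=\lambda\xi_{j}$ for $1\le j\le n$ and $\hat{A}_{0}=\lambda\tau$. Consequently $\xi_{k}\hat{A}_{j}-\xi_{j}\hat{A}_{k}=0$ on the open set $\{|\tau|<|\xi|\}$, and by real-analytic continuation this identity extends to all $(\tau,\xi)\in\mathbb{R}^{n+1}$. Inverting the Fourier transform yields $\partial_{x_{k}}A_{j}=\partial_{x_{j}}A_{k}$.

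To construct $\varphi$, I would set
\[
\varphi(t,x_{1},\ldots,x_{n}) \;=\; \int_{-\infty}^{x_{1}} A_{1}(t,s,x_{2},\ldots,x_{n})\,\mathrm{d}s.
\]
Then $\partial_{x_{1}}\varphi=A_{1}$ by the fundamental theorem of calculus; for $j\ge 2$, the curl identity $\partial_{x_{j}}A_{1}=\partial_{x_{1}}A_{j}$ together with $A_{j}(t,-\infty,\cdot)=0$ gives $\partial_{x_{j}}\varphi=A_{j}$, and the analogous calculation using $\partial_{t}A_{1}=\partial_{x_{1}}A_{0}$ gives $\partial_{t}\varphi=A_{0}$. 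For the support assertion, the only configuration in which $\varphi(t,x)$ is not trivially zero outside $\mathbb{R}\times\{|x|\le R\}$ is one in which $\varphi(t,x)$ reduces to the full line integral
\[
F(t,x_{2},\ldots,x_{n}) \;:=\; \int_{\mathbb{R}} A_{1}(t,s,x_{2},\ldots,x_{n})\,\mathrm{d}s;
\]
I would verify $F\equiv 0$ by differentiating in $t$ and invoking $\partial_{t}A_{1}=\partial_{x_{1}}A_{0}$ together with the compact $x$-support of $A_{0}$ to obtain $\partial_{t}F=0$, and then letting $|t|\to\infty$ and using the exponential-decay hypothesis to force $F=0$. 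This gives $\mathrm{supp}\,\varphi\subset\mathbb{R}\times\{|x|\le R\}$. Finally, the bound $|\partial_{t}^{\alpha}\partial_{x}^{\beta}\varphi(t,x)|\le C'_{\alpha,\beta}e^{-c|t|}$ for $|t|\ge t_{0}$ follows by differentiating under the integral sign and noting that the effective $s$-range is bounded by $2R$.

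The main obstacle is the Fourier step: upgrading the curl-free identity from the timelike cone $\{|\tau|<|\xi|\}$, where it follows immediately from the light-ray identity, to all of $(\tau,\xi)$ depends crucially on having Paley--Wiener-type real-analyticity in both $\tau$ and $\xi$, and it is there that the exponential $t$-decay hypothesis enters in an essential way. A secondary technical hurdle is the coupling of curl-freeness with exponential decay used to kill the \emph{spatial} line integral $F$, which is what allows $\varphi$ to be compactly supported in $x$.
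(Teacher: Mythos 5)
Your proof is correct, and its first half --- Fourier transforming (\ref{eqn:3.20}) to obtain $\delta(\tau+\omega\cdot\xi)\,(1,\omega)\cdot\widehat{\mathcal{A}}(\tau,\xi)=0$, and then using the $S^{n-2}$-family of admissible $\omega$'s to force $\widehat{\mathcal{A}}(\tau,\xi)$ to be proportional to $(\tau,\xi)$ off the light cone --- is exactly the paper's argument. You diverge afterwards. The paper stays on the Fourier side: it sets $\Phi=-i\widehat{A_{j}}/\xi^{(j)}=-i\widehat{A_{0}}/\tau$ (so that $\Phi=\widehat{\varphi}$), uses the analyticity of the $\widehat{A_{j}}$ in the strip $|\mathrm{Im}\,\tau|<c$ together with Hartogs' theorem to remove the apparent singularities of these quotients, and reads off the support of $\varphi$ from Paley--Wiener bounds on $\Phi$. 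You instead analytically continue only the \emph{polynomial} identities $\xi_{k}\widehat{A_{j}}-\xi_{j}\widehat{A_{k}}=0$ (and their $\tau$-analogues) from the open timelike region to all of $\mathbb{R}^{n+1}$, invert to conclude that $\mathcal{A}$ is a closed $1$-form, and build $\varphi$ as the explicit primitive $\int_{-\infty}^{x_{1}}A_{1}\,\mathrm{d}s$, checking support and exponential decay by hand; the one extra item you must supply is the vanishing of the transverse line integral $F(t,x_{2},\dots,x_{n})=\int_{\mathbb{R}}A_{1}\,\mathrm{d}s$, and your argument ($\partial_{t}F=0$ via $\partial_{t}A_{1}=\partial_{x_{1}}A_{0}$, then $|t|\to\infty$) handles it correctly; equivalently, $\nabla_{t,x}\varphi=\mathcal{A}=0$ on the connected set $\{|x|>R\}$ and $\varphi$ vanishes there as $x_{1}\to-\infty$. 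Your route buys something real: by continuing identities between entire quantities rather than quotients, you sidestep the paper's delicate division step (consistency of the several formulas for $\Phi$ where denominators vanish, and the appeal to Hartogs), and the support and decay verifications reduce to elementary calculus instead of Paley--Wiener. The price is the extra lemma $F\equiv 0$, which is where the exponential $t$-decay re-enters your argument, just as it enters the paper's through the analyticity of $\Phi$ in the $\tau$-strip.
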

    \begin{proof}
    By uniqueness of the Fourier transform and by (\ref{eqn:3.20}) we have
    \begin{equation*}
    0 = \iint e^{-it\tau-ix\cdot\xi}\Big(\int_{-\infty}^{\infty}
    \sum_{j=0}^{n}\omega_{j}A_{j}(t+s,x+s\omega)\thinspace \mathrm{d}s\Big)
    \thinspace dt dx.
    \end{equation*}

    By the hypothesis on the support of $A_{j}$, $1\le j \le n$,
    we can change the order of integration.  After writing
    $t_{1} = t + s$, $x_{1} = x + s\omega$, we are lead to
    \begin{align}\nonumber
    0 & = \iiint e^{-i(t_{1}-s)\tau-i(x_{1}-s\omega)\cdot\xi}
                 \Big(A_{0}+\sum_{j=1}^{n}\omega_{j}A_{j}\Big)(t_{1},x_{1})
          \thinspace dt_{1}dx_{1}ds \\ \nonumber
      & = \int e^{is(\tau+\omega\cdot\xi)}
                 \Big(A_{0}+\sum_{j=1}^{n}\omega_{j}A_{j}\Big)^{\wedge}(\tau,\xi)
          \thinspace ds \\ \label{eqn:3.21}
      & = \delta(\tau+\omega\cdot\xi)\Big(A_{0}+\sum_{j=1}^{n}\omega_{j}A_{j}\Big)^{\wedge}(\tau,\xi),
    \end{align}
    which tells us that the Fourier transform of $A_{0}+\sum_{j=1}^{n}\omega_{j}A_{j}$
    vanishes on $\Pi_{(1,\omega)}$, the hyperplane perpendicular to
    $(1,\omega)$.

    We claim that this Fourier transform vanishes in the
    complement of the `light cone' $\mathcal{C} =
    \{(\tau,\xi) \thinspace : \thinspace |\tau|\ge|\xi|\}$
    for an appropriate choice of $\omega$.
    To see this notice that if $(\tau,\xi)\not\in\mathcal{C}$,
    then $\frac{-\tau}{|\xi|}$ has norm less than one and we can
    find $\omega = \omega(\tau,\xi) \in S^{n-1}$ satisfying
    \begin{equation}\label{eqn:3.22}
       \frac{\xi}{|\xi|}\cdot\omega(\tau,\xi) = -\frac{\tau}{|\xi|}.
    \end{equation}

    Clearly with this choice of $\omega$ we have
    $\tau+\omega(\tau,\xi)\cdot\xi=0$
    and the function
    $\big(A_{0}+\sum_{j=1}^{n}\omega_{j}(\tau,\xi)A_{j}\big)^{\wedge}(\tau,\xi)$
    vanishes when $|\tau| < |\xi|$. Morever, this shows that the Fourier
    transform of the vector potential $\widehat{\mathcal{A}}(\tau,\xi)$ is
    perpendicular to the $(n+1)$-dimensional vector $(1,\omega(\tau,\xi))$
    as
    \begin{equation}\label{eqn:3.22.2}
    \big(A_{0}+\sum_{j=1}^{n}\omega_{j}(\tau,\xi)A_{j}\big)^{\wedge}(\tau,\xi)
    = (1,\omega(\tau,\xi)) \cdot \widehat{\mathcal{A}}(\tau,\xi).
    \end{equation}
    Equation (\ref{eqn:3.22}) has infinitely many solutions and as a matter of
    fact they can be parametrized by $S^{n-2}$.
    On the other hand, equation (\ref{eqn:3.22.2}) tells us that
    $\widehat{\mathcal{A}} = (\widehat{A}_{0},\dots,\widehat{A}_{n}) $ is
    orthogonal to all elements of $E=\{(1,\omega(\tau,\xi)) :
    \tau + \omega(\tau,\xi)\cdot\xi =0 \}$.
    It is not hard to prove (see Appendix A) that the orthogonal complement
    $E^{\perp}$ is one dimensional and since $(\tau,\xi)$ is perpendicular
    to any vector of the form $(1,\omega(\tau,\xi))$, this complement has
    to agree with the line $\{ c(\tau,\xi)\thinspace :\thinspace c\in\ERRE{}\}$.

    Since the previous argument works for an arbitrary $\tau$ and since the set
    $\{ \xi\thinspace : \thinspace |\tau| < |\xi| \}$ is an open subset in
    $\ERRE{n}$, we see that
    $\widehat{\mathcal{A}}(\tau,\xi) =
    \big(\widehat{A_{0}}(\tau,\xi),\dots,\widehat{A_{n}}(\tau,\xi)\big)$
    is proportional to the vector $(\tau,\xi)$ in the complement of the
    light cone. In other words, we can find a function $\Phi$ such that
    \begin{equation} \label{eq:xi:analytic}
      (\widehat{A_{0}}(\tau,\xi),\dots,\widehat{A_{n}}(\tau,\xi))
      =i\Phi(\tau,\xi)\thinspace(\tau,\xi)
    \end{equation}
    whenever $|\tau|<|\xi|$. Since for any $j$ the function $A_{j}$ decays 
    exponentially in $t$ and is compactly supported in $x$ then its Fourier 
    transform $\widehat{A_{j}}$ is analytic in the strip 
    $|\mathrm{Im}\thickspace\tau| < c$.
    
    On the other hand, equation (\ref{eq:xi:analytic}) gives   
    \begin{align*}
    \Phi(\tau,\xi) & = -\frac{i\widehat{A_{j}}(\tau,\xi)}{\xi^{(j)}},\quad{1\le j\le n}, \\
    \Phi(\tau,\xi) & = -\frac{i\widehat{A_{0}}(\tau,\xi)}{\tau},
    \end{align*}
    which tell us that
    $\Phi$ is analytic in the set $\{(\tau,\xi)\thinspace:\thinspace 
    |\mathrm{Im}\thickspace\tau| < c,\thinspace(\tau,\xi)\neq(0,0)\}$.
    Hartog's theorem (see \cite{Hartog}) tells us that the
    concepts of removable singularities and isolated singularities agree in
    functions of several complex variables and we conclude that
    $\Phi$ is analytic in the strip $|\mathrm{Im}\thickspace\tau|<c$. 
    Moreover if we let $\varphi$ be the inverse Fourier transform of
    $\Phi$, then $\varphi$ and all of its derivatives are exponentially 
    decaying in $t$ and we only need to make sure that it has the right 
    support properties.

    Because of the assumptions on the support of the functions $A_{j}$ we have,
    by the Paley-Wiener theorem
    \begin{equation*}
    \big| \widehat{A_{j}}(\tau,\xi)\big| \le
          \frac{C_{N}^{(j)}\exp(R|\mathrm{Im}\thinspace\xi|)}{(1+|\xi|)^{N}},\qquad 0\le j\le n,
    \end{equation*}
    for $1\le j \le n$. Then for $|\xi^{(j)}| > 1$
    \begin{equation*}
    \big|\Phi(\tau,\xi)\big| =
    \left|\frac{\widehat{A_{j}}(\tau,\xi)}{\xi^{(j)}}\right| \le
          \frac{C_{N}^{(j)}\exp(R|\mathrm{Im}\thinspace\xi|)}{(1+|\xi|)^{N}},\quad 0\le j\le n,
    \end{equation*}
    for some $C_{N}>0$. Since the function
    $h(\tau,\xi) =
    \Phi(\tau,\xi)(1+|\xi|)^{N}\exp(-R|\mathrm{Im}\thinspace\xi|)$ is continuous
    when $|\xi^{(j)}|\le 1$, it is also 
    bounded, hence $|h(\tau,\xi)| \le C_{N}$ for some positive $C_{N}$ and
    the estimate
    \begin{equation}\label{Paley-Wiener}
    \big|\Phi(\tau,\xi)\big| \le
          \frac{C\exp(R|\mathrm{Im}\thinspace\xi|)}{(1+|\xi|)^{N}}
    \end{equation}
    holds for any $\xi\in\ERRE{n}$. Making use once again of the
    Paley-Wiener theorem we conclude that the inverse fourier transform 
    of $\Phi(\tau,\xi)$ is supported in the set $\{x : |x| \le R\}$.
    \end{proof}

    Before going ahead to prove the correspoding equality of the scalar potentials,
    we will pause for a second to relax the conditions imposed on the vector potential.
    Let us start by replacing exponentially decaying by Schwartz functions.
    \begin{thm}
       Suppose that $\mathcal{A}(t,x) =
       \big(A_{0}(t,x),\dots,A_{n}(t,x) \big)$ 
       with $\mathcal{A}\in C^{\infty}$ in $x$ and $t$ is such 
       that for any $M>0$ and non-negative integers $\alpha$,$\beta$ 
       there exist constants $C_{M,\alpha,\beta} > 0$ such that
       $(1+|t|)^M |\partial^{\alpha}_{t} \partial^{\beta}_{x}A_{j}(t,x) | 
       \le C_{M,\alpha,\beta}$
       for $0\le j \le n$.
       If in addition $A_{j}(t,x) = 0$ for
       $|x| \ge R > 0$ and (\ref{eqn:3.20}) holds, then there exists 
       $\varphi(t,x)\in C^{\infty}(t,x)$ such that
       \begin{itemize}
       \item[\textit{i)}] $A_{0}(t,x) = \DT \varphi(t,x),\quad$ $A_{j}(t,x)
                                      = \DXJ{j}\varphi(t,x),\quad$
                          $1\le j \le n$, and
       \item[\textit{ii)}] $\mathrm{Supp}\thinspace\varphi \subseteq
                            \mathbb{R} \times \{ |x| \le R \},\quad$
                            $(1+|t|)^M |\partial^{\alpha}_{t} \partial^{\beta}_{x}
			     \varphi(t,x) | \le C^{\prime}_{M,\alpha,\beta}$. 
       \end{itemize}
    \end{thm}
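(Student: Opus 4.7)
The plan is to follow the outline of the preceding theorem, replacing the use of analyticity in $\tau$ (which came from exponential decay of $A_{j}$ in $t$) with analyticity in $\xi$ (available here because $A_{j}$ is still compactly supported in $x$). The preparatory Fourier-analytic steps go through verbatim: starting from Lemma~\ref{lemma:4.1} and equation~(\ref{eqn:3.20}) and repeating the computation that produced~(\ref{eqn:3.22.2}), I would obtain that $\widehat{\mathcal{A}}(\tau,\xi)$ is proportional to $(\tau,\xi)$ whenever $|\tau|<|\xi|$; equivalently, the identities $\tau\,\widehat{A_{j}}(\tau,\xi) = \xi^{(j)}\widehat{A_{0}}(\tau,\xi)$ and $\xi^{(k)}\widehat{A_{j}}(\tau,\xi) = \xi^{(j)}\widehat{A_{k}}(\tau,\xi)$ hold on the real open set $\{(\tau,\xi) : |\tau|<|\xi|\}$.

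The hard step is to extend these identities across the light cone $|\tau|=|\xi|$. The exponentially-decaying proof used Hartogs' theorem after analytically continuing in $\tau$, which is not available in the Schwartz setting. I would instead exploit Paley--Wiener in $\xi$: since $A_{j}(t,\cdot)$ is supported in $\{|x|\le R\}$, for each fixed real $\tau$ the map $\xi\mapsto\widehat{A_{j}}(\tau,\xi)$ extends entirely to $\mathbb{C}^{n}$ with bounds of exponential type $R$. The displayed identities are then equalities between entire functions of $\xi\in\mathbb{C}^{n}$ that hold on the nonempty real open set $\{\xi\in\mathbb{R}^{n}:|\xi|>|\tau|\}$; by the identity theorem for holomorphic functions of several complex variables, they must hold on all of $\mathbb{C}^{n}$, and in particular for every real $(\tau,\xi)\in\mathbb{R}^{n+1}$. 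This globalization across the light cone is the main obstacle.

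Once the identities are global, setting $\xi^{(k)}=0$ in $\xi^{(k)}\widehat{A_{j}}=\xi^{(j)}\widehat{A_{k}}$ and picking $j$ with $\xi^{(j)}\neq 0$ shows that $\widehat{A_{k}}$ vanishes on the coordinate hyperplane $\{\xi^{(k)}=0\}$ for $k\ge 1$, and analogously $\widehat{A_{0}}$ vanishes on $\{\tau=0\}$. Hadamard's smooth division lemma then yields smooth functions $g_{0},\dots,g_{n}$ on $\mathbb{R}^{n+1}$ with $\widehat{A_{k}}=i\xi^{(k)}g_{k}$ and $\widehat{A_{0}}=i\tau g_{0}$. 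The globalized identities force these $g_{j}$'s to coincide on the dense open set where the relevant coordinates are nonzero, hence everywhere by continuity; let $\Phi$ denote the common smooth function, so that $\widehat{A_{j}}=i\xi^{(j)}\Phi$ for $0\le j\le n$ with the convention $\xi^{(0)}:=\tau$.

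Finally I would set $\varphi:=\mathcal{F}^{-1}\Phi$ and verify the stated properties, which at this point is essentially the same bookkeeping as in the exponentially-decaying case. The vanishing of $\widehat{A_{j}}$ on $\{\xi^{(j)}=0\}$ cancels the apparent pole of $1/\xi^{(j)}$, so the Paley--Wiener bound~(\ref{Paley-Wiener}) transfers to $\Phi$ and gives $\mathrm{supp}\,\varphi\subseteq\mathbb{R}\times\{|x|\le R\}$. The polynomial decay $(1+|t|)^{M}|\partial_{t}^{\alpha}\partial_{x}^{\beta}A_{j}|\le C_{M,\alpha,\beta}$ translates through the Fourier transform to smoothness and decay of $\widehat{A_{j}}$ in $\tau$, which in turn yields the required $(1+|t|)^{M}|\partial_{t}^{\alpha}\partial_{x}^{\beta}\varphi|\le C'_{M,\alpha,\beta}$. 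Inverse Fourier transforming $\widehat{A_{j}}=i\xi^{(j)}\Phi$ then gives $A_{j}=\partial_{x_{j}}\varphi$ for $1\le j\le n$ and $A_{0}=\partial_{t}\varphi$, completing the proof.
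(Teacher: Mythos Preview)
Your proposal is correct and follows essentially the same strategy as the paper: both arguments use the compact $x$-support of $A_{j}$ to obtain analyticity of $\widehat{A_{j}}(\tau,\cdot)$ in $\xi$, and use this to extend the proportionality $\widehat{\mathcal{A}}\parallel(\tau,\xi)$ from the region $|\tau|<|\xi|$ to all of $\mathbb{R}^{n+1}$. The only real difference is in the tool used to produce the scalar $\Phi$: the paper writes $\Phi=-i\widehat{A_{j}}/\xi^{(j)}$ and invokes Hartogs' theorem to remove the apparent singularity at $\xi=0$ for each fixed $\tau_{0}\neq 0$ (handling $\tau=0$ separately via $\Phi(0,\xi)=-i\widehat{A_{j}}(0,\xi)/\xi^{(j)}$), whereas you first globalize the cross-identities $\xi^{(k)}\widehat{A_{j}}=\xi^{(j)}\widehat{A_{k}}$ by the identity theorem and then construct $\Phi$ via Hadamard's smooth division lemma. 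Your route is arguably a bit more self-contained, since it avoids the several-complex-variables machinery and makes the smoothness of $\Phi$ across the coordinate hyperplanes explicit; the paper's route is shorter but leans on Hartogs. Either way, the Paley--Wiener and Schwartz bookkeeping for the support and decay of $\varphi$ is identical to the exponentially-decaying case.
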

    \begin{proof}
    The proof goes along the same lines as the previous proposition except that 
    now in equation (\ref{eq:xi:analytic}) we only know that the left hand side 
    is entire in $\xi$. For $\tau_{0}\neq 0$ fixed, Hartog's theorem tells us 
    that $\Phi(\tau_{0},\xi)$ is entire and when $\tau = 0$, equation
    (\ref{eq:xi:analytic}) gives 
    $\Phi(0,\xi) = -\dfrac{i\widehat{A_{j}}(0,\xi)}{\xi^{(j)}}$
    for $1\le j \le n$ showing that $\Phi$ has no singularities.
     
    The part of the proof that deals with the support of $\varphi$ remains unchanged and
    we only need to show that $\Phi$ is a Schwartz function.
    If $M^{\prime}>0$ and $\beta$ is a non-negative integer, we have for $|\xi|\le R$
    \begin{align*}
           (1+|\tau|)^{\tilde{M}} \left| \partial^{\beta}_{\tau} \Phi(\tau,\xi)\right|
       & = (1+|\tau|)^{\tilde{M}} \left| \partial^{\beta}_{\tau} \left( i\frac{\widehat{A_{0}}(\tau,\xi)}{\tau}
           \right) \right| \\
       & = (1+|\tau|)^{\tilde{M}} \left| \sum_{j=0}^{\beta} c_{j}
           \partial^{j}_{\tau} \widehat{A_{0}}(\tau,\xi) \partial_{\tau}^{\beta-j} \left( \frac{1}{\tau}
       \right) \right|   \\
       & \le C (1+|\tau|)^{\tilde{M}^{\prime}} \left| \sum_{j=0}^{\beta}
           \partial^{j}_{\tau} \widehat{A_{0}}(\tau,\xi) \right| \le C_{\tilde{M}^{\prime},\beta,R},
    \end{align*}
    where in the last inequality we used the fact that the exponentially decaying
    $C^{\infty}$ functions Fourier transform into Schwartz functions. Since $\Phi$ is itself
    Schwartz, the desired function $\varphi$ is again the inverse Fourier
    transform of $\Phi$.
    \end{proof}
    The conditions on the vector potential imposed so far are such that
    we end up working with functions once we compute the Fourier transform of equation
    (\ref{eqn:3.20}), nevertheless, this transform can be computed under weaker assumptions
    and the following theorem tells us that the final result is still valid.\\

    \begin{thm}\label{prop:uniq2}
       Suppose that $\mathcal{A}(t,x) =
       \big(A_{0}(t,x),\dots,A_{n}(t,x) \big)$ with 
       $\mathcal{A} \in C^{\infty}$ in $x$ and $t$ is such that for
       $ 0 \le j \le n $,
       $| A_{j}(t,x) | \le C(1+ |t|)^{M}$ with $C,M>0$ and $|t| \ge t_{0}$.
       If in addition the functions $|A_{j}(t,x)|$ are locally integrable in
       $\ERRE{n+1}$, satisfy the support condition $A_{j}(t,x) = 0$
       for $|x| \ge R > 0$, and equation (\ref{eqn:3.20}) holds;
       then there exists $\varphi(t,x)\in C^{\infty}(t,x)$ such that
       \begin{itemize}
       \item[\textit{i)}] $A_{0}(t,x) = \DT \varphi(t,x),\quad$ $A_{j}(t,x)
                                      = \DXJ{j}\varphi(t,x),\quad$
                          $1\le j \le n$, and
       \item[\textit{ii)}] $\mathrm{Supp}\thinspace\varphi \subseteq
                            \mathbb{R} \times \{ |x| \le R \}$.
       \end{itemize}
    \end{thm}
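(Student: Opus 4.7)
My plan is to imitate the Fourier-analysis strategy of Theorem \ref{prop:uniq} and its successor, but reinterpreted at the level of tempered distributions — under the present hypotheses, polynomial growth in $t$ prevents $\widehat{A_j}$ from being an ordinary function. The assumptions $|A_j(t,x)| \le C(1+|t|)^M$, local integrability, and compact support in $x$ imply $A_j \in \mathcal{S}'(\mathbb{R}^{n+1})$, so $\widehat{A_j}$ is well-defined as a tempered distribution. Crucially, compact support in $x$ together with the Paley--Wiener--Schwartz theorem guarantees that $\widehat{A_j}(\tau,\xi)$ is entire in $\xi \in \mathbb{C}^n$, with bounds of the form
\begin{equation*}
|\widehat{A_j}(\tau,\xi)| \le C_N (1+|\tau|)^M \frac{e^{R|\mathrm{Im}\,\xi|}}{(1+|\xi|)^N}
\end{equation*}
in a suitable tempered sense. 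This mixed regularity — distributional in $\tau$, analytic in $\xi$ — is the structural input that makes the rest of the proof go through.

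The first step is to recast the derivation of (\ref{eqn:3.21}) distributionally, obtaining
\begin{equation*}
\delta(\tau + \omega\cdot\xi)\Bigl(\widehat{A_0}(\tau,\xi) + \sum_{j=1}^n \omega_j \widehat{A_j}(\tau,\xi)\Bigr) = 0 \qquad \text{in } \mathcal{S}'(\mathbb{R}^{n+1}),
\end{equation*}
the product on the left being legitimate because $\widehat{A_j}$ is entire (hence smooth) in the $\xi$-direction transverse to the characteristic hyperplane. Then, exactly as in the proof of Theorem \ref{prop:uniq}, for any $(\tau,\xi)$ with $|\tau|<|\xi|$ the $(n-2)$-parameter family of unit vectors $\omega$ with $\tau + \omega\cdot\xi = 0$ together with the elementary linear-algebra argument on $E^\perp$ forces $(\widehat{A_0}, \widehat{A_1}, \dots, \widehat{A_n})$ to be proportional to $(\tau,\xi)$ on the light-cone complement $\{|\tau|<|\xi|\}$, with some so-far-only-locally-defined distributional proportionality factor $\Phi$.

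The second step is to extend $\Phi$ to a global tempered distribution on $\mathbb{R}^{n+1}$. Because each $\widehat{A_j}$ is entire in $\xi$, the identity $\xi^{(k)}\widehat{A_j} = \xi^{(j)}\widehat{A_k}$ — valid a priori only on the open light-cone complement — propagates by analytic continuation in $\xi$ to an identity in $\mathcal{S}'(\mathbb{R}^{n+1})$ everywhere. Consequently $-i\widehat{A_j}/\xi^{(j)}$ is a well-defined tempered distribution in $\tau$ that is entire in $\xi$ away from $\xi^{(j)} = 0$, and the several definitions patch coherently via the common proportionality $\Phi$. A Hartogs-type removable-singularity argument, analogous to the ones invoked in the previous two proofs but now carried out for the $\mathcal{S}'(\mathbb{R}_\tau)$-valued analytic function $\xi \mapsto \Phi(\cdot,\xi)$, extends $\Phi$ across the residual singular set $\xi = 0$. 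Setting $\varphi := \mathcal{F}^{-1}\Phi$ then gives $A_0 = \partial_t\varphi$ and $A_j = \partial_{x_j}\varphi$; smoothness of $\varphi$ follows from smoothness of the $A_j$, and the support statement $\mathrm{supp}\,\varphi \subseteq \mathbb{R}\times\{|x|\le R\}$ is obtained by transferring the Paley--Wiener estimate above from $\widehat{A_j}$ to $\Phi$ exactly as in the closing bound (\ref{Paley-Wiener}) of Theorem \ref{prop:uniq}, followed by the converse Paley--Wiener--Schwartz theorem in the $x$-variables.

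The main obstacle will be rigorously handling the distributional division by $\xi^{(j)}$, and the accompanying Hartogs-type extension at $\xi = 0$, when $\Phi$ is only a distribution — not an analytic function — in the $\tau$ variable. The guiding principle is that $\xi$-entirety reduces each such operation to a one-variable question in $\tau$ that can be performed uniformly in a tempered sense; with the $\tau$-analyticity exploited in the earlier theorems no longer available, every removability and extension statement must be re-justified using this $\xi$-entirety alone.
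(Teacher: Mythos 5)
Your setup is the same as the paper's (Fourier transform as a tempered distribution in $\tau$, entire in $\xi$ by Paley--Wiener--Schwartz, restriction to the characteristic hyperplane justified by the wavefront set / transversal analyticity), and you correctly identify where the difficulty lies. But the proposal does not actually resolve that difficulty, and the resolution is the one genuinely new ingredient of this theorem relative to Theorem \ref{prop:uniq}. The step ``the $(n-2)$-parameter family of $\omega$ with $\tau+\omega\cdot\xi=0$ together with the linear-algebra argument on $E^{\perp}$ forces $\widehat{\mathcal{A}}(\tau,\xi)$ to be proportional to $(\tau,\xi)$'' is a \emph{pointwise} argument: it requires evaluating $\widehat{A_j}$ at a fixed $(\tau,\xi)$ and substituting the point-dependent direction $\omega=\omega(\tau,\xi)$ into the identity. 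When $\widehat{A_j}$ is only a distribution in $\tau$ this substitution is not defined, and $\xi$-entirety alone does not rescue it, because the constraint $\tau+\omega\cdot\xi=0$ couples $\tau$ and $\xi$: the admissible family of $\omega$'s varies with the very variable in which you have no pointwise values. Your fallback, ``$\xi$-entirety reduces each such operation to a one-variable question in $\tau$,'' is precisely what fails here.

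The paper's device is a radial mollification exploiting the degree-$0$ homogeneity of $\omega(\tau,\xi)$: one replaces $(\tau,\xi)$ by $(\alpha\tau,\alpha\xi)$, multiplies by an arbitrary $\chi(\alpha)\in C_{0}^{\infty}$ supported near $\alpha=1$, and integrates in $\alpha$ to form $a_{j}(\tau,\xi)=\int\widehat{A_{j}}(\alpha\tau,\alpha\xi)\chi(\alpha)\,d\alpha$. Because $\omega(\alpha\tau,\alpha\xi)=\omega(\tau,\xi)$, the relation \eqref{eqn:vec:pot:with:alpha} survives this averaging, the $a_{j}$ are honest functions, the substitution $\omega=\omega(\tau,\xi)$ and the $E^{\perp}$ argument become legitimate, and arbitrariness of $\chi$ then recovers the distributional identities $\widehat{A_{0}}=i\tau\widehat{\Psi}$, $\widehat{A_{j}}=i\xi_{j}\widehat{\Psi}$ on $\{|\tau|<|\xi|\}$. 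Without this (or an equivalent smoothing/duality argument) your proportionality factor $\Phi$ is never constructed, and the subsequent analytic continuation of $\xi^{(k)}\widehat{A_{j}}=\xi^{(j)}\widehat{A_{k}}$ --- which is otherwise a sound way to pass from the light-cone complement to all of $\xi$-space after pairing with a test function in $\tau$ --- has nothing to continue. The remainder of your outline (inverse transform, smoothness and support of $\varphi$ from $\nabla_{t,x}\varphi=\mathcal{A}$) is consistent with the paper's conclusion, but the central step is missing.
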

    \begin{proof}
       By the hypothesis on the growth of $A_{j}$, $1\le j \le  n$, we can compute the
       Fourier transform of equation (\ref{eqn:3.20}) to obtain
       $ \delta(\tau + \omega\cdot\xi) \big( A_{0} + \sum_{j=1}^{n} \omega_{j}A_{j} \big)^{\wedge}
       (\tau,\xi) =0$, where $\widehat{A}_{j}(\tau,\xi)$ is an analytic function in $\xi$ and a
       distribution in $\tau$. In addition, since the wavefront set of $\delta(\tau + \omega\cdot\xi)$
       and $\widehat{A}_{0} + \sum_{j=1}^{n}\omega_{j}\cdot\widehat{A}_{j}$ do not intersect,
       we can define a restriction of $\widehat{A}_{0} + \sum_{j=1}^{n}\omega_{j}\cdot\widehat{A}_{j}$
       on the hyperplane $\tau + \omega\cdot\xi = 0$ (cf. H\"ormander \cite{Hormander}). Proceeding 
       as before we find that when
       $|\tau| < |\xi|$ there are infinitely many solutions of equation
       (\ref{eqn:3.22}) and that they can be parametrized by $S^{n-2}$. Moreover, the change
       $(\tau,\xi) \to (\alpha\tau,\alpha\xi)$, $\alpha >0$,
       in (\ref{eqn:3.22}) leads to
       \begin{align*}
          \frac{\alpha \xi}{ | \alpha | |\xi|} \cdot \omega(\alpha\tau,\alpha\xi)
     & = - \frac{\alpha\tau}{|\alpha| |\tau|} \\
          \frac{\xi}{ |\xi|} \cdot \omega(\alpha\tau,\alpha\xi)
     & = - \frac{\tau}{|\tau|},
       \end{align*}
       which tells us that the solutions $\omega(\tau,\xi)$ of (\ref{eqn:3.22}) are homogeneous
       of degree 0 in $(\tau,\xi)$.

       Therefore 
      \begin{equation}\label{eqn:vec:pot:with:alpha:non:homo}
          \widehat{A}_{0}(\tau,\xi) +
         \sum_{j=1}^{n} \omega_{j}\widehat{A}_{j}(\tau,\xi)= 0
      \end{equation}
      on the plane $\tau + \omega\cdot\xi = 0$.
%
%

      Replace
      $(\tau,\xi)$ by $(\alpha\tau,\alpha\xi)$ where $\alpha>0$ and 
      we then see that for
      $\alpha>0$
      \begin{equation}\label{eqn:vec:pot:with:alpha}
          \widehat{A}_{0}(\alpha\tau,\alpha\xi) +
         \sum_{j=1}^{n} \omega_{j}\widehat{A}_{j}(\alpha\tau,\alpha\xi)= 0.
      \end{equation}
      We now let $\chi(\alpha)$ be an arbitrary $C_{0}^{\infty}(\ERRE{})$ function
      with support contained in the set $|\alpha - 1 |< \epsilon$ and multiply
      (\ref{eqn:vec:pot:with:alpha}) by $\chi(\alpha)$. Integration in
      $\alpha$ leads to
      \begin{equation}\label{eq:52}
          a_{0}(\tau,\xi) +
         \sum_{j=1}^{n} \omega_{j}a_{j}(\tau,\xi)= 0,
      \end{equation}
      where $\tau + \omega\cdot\xi = 0 $ and
      \begin{equation*}
          a_{j}(\tau,\xi) = \int_{-\infty}^{\infty}
         \widehat{A}_{j}(\alpha\tau,\alpha\xi) \chi(\alpha)
         \thinspace d\alpha.
      \end{equation*}
      Notice that $a_{j}(\tau,\xi)$ are no longer distributions and we can put
      $\omega  = \omega(\tau,\xi)$ in (\ref{eq:52}).
      Arguing as before we find that
      $\big(a_{0}(\tau,\xi),\dots,a_{n}(\tau,\xi)\big)=ib(\tau,\xi)(\tau,\xi)$
      for some $b(\tau,\xi)$, or in other words,
      \begin{equation*}
         \frac{a_{0}(\tau,\xi)}{\tau}  =
         \frac{a_{1}(\tau,\xi)}{\xi_{1}}  =
     \cdots = \frac{a_{n}(\tau,\xi)}{\xi_{n}}  =
     ib(\tau,\xi).
      \end{equation*}
      Since $\chi(\alpha)$ is arbitrary we get
      \begin{equation*}
         \frac{\widehat{A}_{0}(\alpha\tau,\alpha\xi)}{\alpha\tau}  =
         \frac{\widehat{A}_{1}(\alpha\tau,\alpha\xi)}{\alpha\xi_{1}}  =
     \cdots = \frac{\widehat{A}_{n}(\alpha\tau,\alpha\xi)}{\alpha\xi_{n}}  =
     i\widehat{\Psi}(\alpha\tau,\alpha\xi),
      \end{equation*}
      where $\widehat{\Psi}(\alpha\tau,\alpha\xi)$ is a distribution in $\alpha\tau$ for
      all $\alpha\in(1-\epsilon,1+\epsilon)$. Finally when $\alpha = 1$ we get
      \begin{equation*}
         \widehat{A}_{0}(\tau,\xi)=i\tau\widehat{\Psi}(\tau,\xi), \quad
         \widehat{A}_{1}(\tau,\xi)=i\xi_{1}\widehat{\Psi}(\tau,\xi),
     \quad\dots\quad,
         \widehat{A}_{n}(\tau,\xi)=i\xi_{n}\widehat{\Psi}(\tau,\xi)
      \end{equation*}
      for $|\tau| < |\xi| $. As before we have that $\widehat{\Psi}$ is entire in
      $\xi$ and $\widehat{\Psi}\in S^{\prime}$ in $\tau$ (since $A_{j}\in S^{\prime}$
      in $\tau$). Therefore $\varphi = \mathcal{F}^{-1}_{\tau,\xi}\widehat{\Psi} \in S^{\prime}$
      in $t$. Moreover the identies $ \partial_{t}\varphi = A_{0},
         \partial_{x_{1}}\varphi = A_{1}, \dots \partial_{x_{n}}\varphi = A_{n} $ 
      imply that $ \varphi(t,x)\in C^{\infty}$ in $(t,x)$ and that 
      $\varphi = 0$ for $|x|>R$.
    \end{proof}

    Summarizing, the three previous results prove that the vector potentials
    $\mathcal{A}^{(1)}$ and $\mathcal{A}^{(2)}$ are gauge equivalent with
    gauge $g=e^{i\varphi}$.
    Next, we show that this equivalence implies the equality of the scalar
    potentials $V^{(1)}$ and $V^{(2)}$.

    By the previous proposition
    $\mathcal{A}^{(2)}-\mathcal{A}^{(1)} = \nabla_{t,x}\varphi$, replacing the pair
    $\big(\mathcal{A}^{(1)},V^{(1)}\big)$ by $\big(\mathcal{A}^{(3)},V^{(3)}\big)$ where
    $\mathcal{A}^{(3)} =\mathcal{A}^{(1)} + \nabla_{t,x}\varphi $ and
    $V^{(1)}=V^{(3)}$, we find by means of proposition \ref{prop1} that
    $\mathcal{A}^{(3)}=\mathcal{A}^{(2)}$. Next we use our Green's formula
    (\ref{green:for:scalar:pot}) with the pair of potentials $\big(\mathcal{A}^{(2)},V^{(2)}\big)$
    and $\big(\mathcal{A}^{(3)},V^{(3)}\big)$ to obtain
    \begin{equation*}
       0 =
       \ELEDOTT{\big(V^{(3)}-V^{(2)}\big)u}{v}
       = \int_{T_{1}}^{T_{2}} \int_{\Omega}
         \left(V^{(3)}-V^{(2)}\right) u \overline{v}\thinspace dx dt.
    \end{equation*}
    Making use of the geometric optics representations (\ref{geom:optic:1a})-(\ref{geom:op:r2})
    the above integral becomes
    \begin{equation}\label{eqn:for:V}
    0=\int_{T_{1}}^{T_{2}} \int_{\Omega}
          \left(V^{(3)}(t,x)-V^{(2)}(t,x)\right)
                e^{i\left(\overline{R_{2}(t,x;\omega)}-R_{1}(t,x;\omega)\right)}
                \chi^{2}(t',x')
      \thinspace dx dt + \cdots \thinspace
    \end{equation}
    where $\cdots$ denotes terms of order $\mathcal{O}\left(k^{-1}\right)$.
    Taking the limit as $k\to +\infty$ we
    notice that the equality of the vector potentials imply that
    $\overline{R_{2}} - R_{1} = 0$. Thus, after a change of variables, we can
    rewrite (\ref{eqn:for:V}) as
    \begin{equation*}
    0=\int_{\Pi_{(1,\omega)}} \left( \int_{-\infty}^{\infty}
          V^{(3)}\big(X^{\prime} + s(1,\omega)\big)-
      V^{(2)}\big(X^{\prime} + s(1,\omega)\big) \thinspace
      \mathrm{d}s \right)
                \chi^{2}\left(X^{\prime}\right)
      \thinspace \mathrm{d}S_{X^{\prime}},
    \end{equation*}
    since $\chi$ is arbitrary the inner integral in the expression above vanishes
    \begin{equation}
      \int_{\infty}^{\infty}
      \left(
      V^{(3)}(t^{\prime}+s,x^{\prime}+s\omega) -
      V^{(2)}(t^{\prime}+s,x^{\prime}+s\omega)
      \right)
      \thinspace \mathrm{d}s = 0
    \end{equation}
    which shows that the light ray transform of the potentials agree.
    A simple variation of the previous proof applies 
    and we have $V^{(1)}=V^{(3)}=V^{(2)}$. Therefore
    the pair of potentials $\left(\mathcal{A}^{(1)},V^{(1)}\right)$ and
    $\left(\mathcal{A}^{(2)},V^{(2)}\right)$ are gauge equivalent (see also
    \cite{Stefanov},\cite{Ramm:Sjostrand}).

\section{Stability estimate}\label{sec:stability:estimates}

    In this section we assume that the components of the vector potentials
    $\mathcal{A}^{(1)}$ and $\mathcal{A}^{(2)}$ are real valued, smooth and 
    compactly supported in both $t$ and $x$. Just as we did before, let us write
    \begin{equation*}
       \mathcal{A} = \mathcal{A}^{(1)} - \mathcal{A}^{(2)} \quad\text{where}\quad
       \mathcal{A}^{(k)} = (A_{0}^{(k)},\dots,A_{n}^{(k)}),\quad k=1,2,
    \end{equation*}
    and let us further assume that the potential $\mathcal{A}$ satisfies the
    divergence condition
    \begin{equation}\label{condition:divergence}
       \mathrm{div}\thinspace\mathcal{A} = 
       \partial_{t}A_{0}(t,x) + \sum_{j=1}^{n} \partial_{x_{j}}A_{j}(t,x) = 0.
    \end{equation}
    Since the potentials are compactly supported we can find real numbers 
    $T_{1} < 0 < T_{2}$ and an open set $\mathcal{D} \subseteq \ERRE{}_{t}\times\ERRE{n}_{x}$
    such that if we set $Q=(T_{1},T_{2})\times\Omega$, then
    \begin{itemize}
       \item[\textit{i)}] $Q\subseteq\mathcal{D}$; and
       \item[\textit{ii)}] $\mathrm{Supp}(A_{j})\subseteq\mathcal{D}$ for $j=0,1,\dots,n$.
    \end{itemize}
    In other words, $\mathcal{D}$ is a set containing $Q$ and the support of all the
    components of the vector potential. Since $\mathcal{D}$ is bounded we can find 
    $T_{3}$ bigger than $|T_{1}|$ and $|T_{2}|$ such that $\mathcal{D} 
    \subseteq (-T,T)\times\ERRE{n}_{x}$, we then choose $T>T_{3}+\mathrm{diam}(\Omega)$
    and set $Q_{T}:=(-T,T)\times\Omega$.

     When $T$ is selected appropriately we can find solutions
     \begin{equation*}
        u(t,x) = \chi_{\omega}(t,x)
              e^{ik(t-\omega\cdot x) + i R_{1}(t,x;\omega) }
         \left( 1 + \mathcal{O} (k^{-1})\right)
     \end{equation*}
     and
     \begin{equation*}
        v(t,x) = \chi_{\omega}(t,x)
              e^{ik(t-\omega\cdot x) + i \overline{ R_{2}(t,x;\omega) } }
         \left( 1 + \mathcal{O} (k^{-1})\right)
     \end{equation*}
     of the backward and forward hyperbolic equations satisfying
     \begin{align*}
        u &= u_{t} = 0\quad\text{ on }\quad\{-T\}\times\Omega, \\
        v &= v_{t} = 0\quad\text{ on }\quad\{T\}\times\Omega,
     \end{align*}
     where the function $\chi_{\omega}$ is such that
    \begin{itemize}
       \item[a)] $(\partial_{t} + \sum_{j=1}^{n}\omega_{j}\partial_{x_{j}}) \chi_{\omega}(t,x) = 0$.
                 This is, $\chi_{\omega}$ is constant along light rays; and
       \item[b)] $\chi_{\omega}$ is supported in a small neighborhood of the ray 
		 $\{ (t+s,x+s\omega) \thickspace : \thickspace s\in\ERRE{} \}$.
    \end{itemize}

    As before we can make use of the Green's formula developed in previous sections to obtain
    \begin{align} \nonumber
      \ELEDOT{(\Lambda_{1} - \Lambda_{2})(f)}{g}_{(-T,T)\times\partial\Omega} = I_{Q_{T}}
          &  := \sum_{j=1}^{n} \big(
           \ELEDOTTT{A_{j}u}{ -i\DXJ{j} v }
         + \ELEDOTTT{A_{j}(-i\DXJ{j} u)}{v}
          \big) \\ \nonumber
          & \thinspace + \sum_{j=1}^{n} 
           \ELEDOTTT{\big( (A_{j}^{(2)} )^{2}
          - (A_{j}^{(1)} )^{2}\big)u}{v} 
          - \ELEDOTTT{Vu}{v} \\ \nonumber
         & - \ELEDOTTT{A_{0}u}{ -i\partial_{t} v }
         - \ELEDOTTT{A_{0}(-i\partial_{t} u)}{v} \\ \nonumber
         & - \ELEDOTTT{\big( (A_{0}^{(2)} )^{2}
            - (A_{0}^{(1)} )^{2}\big)u}{v}  \\ \label{int:QT}
    \end{align}
    where we have set
    \begin{equation*}
       f = u(t,x) \big|_{(-T,T)\times\partial\Omega} \qquad
       g = v(t,x) \big|_{(-T,T)\times\partial\Omega}.
    \end{equation*}

    We next regard $\Lambda_{1}- \Lambda_{2}$ as a map from 
    $H^{1}\to H^{0}$ and denoting by $|||\phantom{m}|||$ the operator norm 
    between these spaces, we have by the Cauchy-Schwarz inequality
    \begin{multline*}
       | I_{Q_{T}} | =
       \big| \ELEDOT{(\Lambda_{1} - \Lambda_{2})(f)}{g}_{(-T,T)\times\partial\Omega} \big| \le
       ||| \Lambda_{1} - \Lambda_{2} ||| \times \\
       \thinspace\thinspace || f ||_{H^{1}((-T,T)\times\partial\Omega)} 
                 \thinspace || g ||_{L^{2}((-T,T)\times\partial\Omega)}.
    \end{multline*}
    The latter norm can be estimated by
    \begin{align}\nonumber 
       || \thinspace g \thinspace ||_{L^{2}((-T,T)\times\partial\Omega)}
       & =   || \thinspace \chi_{\omega}(t,x) 
              (1+\mathcal{O}(k^{-1})) \thinspace ||_{L^{2}((-T,T)\times\partial\Omega)} \\ \label{norm:g:L2}
       & \le || \thinspace \chi_{\omega}(t,x) 
       \thinspace ||_{L^{2}((-T,T)\times\partial\Omega)}
              + \mathcal{O}(k^{-1}),
    \end{align}
    whereas the middle norm can be estimated by
    \begin{align}\nonumber
       || \thinspace f \thinspace ||_{H^{1}((-T,T)\times\partial\Omega)}
       & \le C(n,\Omega) \Big[
           k \big|\big|\thinspace \chi_{\omega} 
                \thinspace\big|\big|_{L^{2}((-T,T)\times\partial\Omega)}
         + \mathcal{O}(1) \Big]  \\ \label{norm:f:H1}
       & = C(n,\Omega)k \Big[
           \big|\big|\thinspace \chi_{\omega} 
                \thinspace\big|\big|_{L^{2}((-T,T)\times\partial\Omega)}
           + \mathcal{O}(k^{-1}) \Big].
    \end{align}
    If in addition we assume that 
    $ ||\chi_{\omega}||_{L^{2}((-T,T)\times\partial\Omega)} \le C $ 
    we have by (\ref{norm:g:L2}) and (\ref{norm:f:H1}) 
    \begin{equation} \label{stab:estim:1}
       | I_{Q_{T}} | \le
       kC(n,\Omega) \Big[ |||\Lambda_{1}-\Lambda_{2} ||| 
       + \mathcal{O}(k^{-1}) \Big],
    \end{equation}
    where $C(n,\Omega)$ is a constant that depends upon the supremum
    of all ray integrals of $A_{0} + \sum_{j=1}^{n} \omega_{j}A_{j}$.
    On the other hand, the integral $I_{Q_{T}}$ leads, just as before, to
    \begin{multline*}
       \Bigg| Ck \int_{-T}^{T}\int_{\Omega} 
          \Big( A_{0} + \sum_{j=1}^{n} \omega_{j}A_{j}\Big)(t,x) \chi_{\omega}^{2}(t,x) 
                 \thinspace \times \\
                 e^{-i \int_{-\infty}^{\frac{1}{2} ( t+\omega\cdot x )} 
		        \big( A_{0} + \sum_{j=1}^{n} \omega_{j}A_{j}\big)(t'+s,x'+s\omega) \thinspace \mathrm{d}s } 
			\thinspace \mathrm{d}x\thinspace \mathrm{d}t
          + \cdots \Bigg|
    \end{multline*}
    where ``$\cdots$'' represents terms that, when divided by $k$, go to zero as $k\to +\infty$. 
    
    Dividing the above expression by $k$ and taking the limit as 
    $k\to\infty$ we get, after using the triangle inequality and
    performing the change of coordinates $(t,x) = \sigma(1,\omega) + Y^{\prime}$
    with $Y^{\prime} \in \Pi_{(1,\omega)}$ 
    \begin{multline}\label{eqn:57}
       \Bigg| \int_{\Pi_{(1,\omega)}} 
       \int_{\mathbb{R}}
          \Big( A_{0} + \sum_{j=1}^{n} \omega_{j}A_{j}\Big)\big(Y^{\prime} + \sigma(1,\omega)\big) 
	        \chi_{\omega}^{2}(Y^{\prime}) \thickspace \times \\
                 e^{-i \int_{-\infty}^{\sigma} 
		        \big( A_{0} + \sum_{j=1}^{n} \omega_{j}A_{j}\big)
			      \big(Y^{\prime} + s(1,\omega) \big) \thinspace \mathrm{d}s } 
			\thinspace \mathrm{d}\sigma\thinspace \mathrm{d}S_{Y^{\prime}}
       \Bigg| \le
       C(n,\Omega) 
       |||\Lambda_{1}-\Lambda_{2} |||. 
    \end{multline}

    If we set
    \begin{equation*}
    a(Y^{\prime}) := 
       \int_{\mathbb{R}}
          \Big( A_{0} + \sum_{j=1}^{n} \omega_{j}A_{j}\Big)\big(Y^{\prime} + \sigma(1,\omega)\big) 
	         \thinspace e^{-i \int_{-\infty}^{\sigma} 
		        \big( A_{0} + \sum_{j=1}^{n} \omega_{j}A_{j}\big)
			\big(Y^{\prime} + s(1,\omega) \big) \thinspace \mathrm{d}s } 
			\thinspace \mathrm{d}\sigma,
    \end{equation*}
    equation (\ref{eqn:57}) can be rewritten as
    \begin{equation*}
       \Big| \int_{\Pi_{(1,\omega)}} a(Y^{\prime}) \chi^{2}(Y^{\prime}) 
            \thinspace \mathrm{d}S_{Y^{\prime}} \Big| 
       \le
       C(n,\Omega) |||\Lambda_{1}-\Lambda_{2} |||.
    \end{equation*}

    On the other hand the conditions imposed on the support of $\chi$ guarantee
    that the above estimate holds true for any such function satisfying the 
    condition 
    $\int_{\Pi_{(1,\omega)}} |\chi(Y^{\prime})|^{2}\mathrm{d}S_{Y^{\prime}} \le 1$,
    thus $a$ is a bouded linear functional on $L^{1}(\Pi_{(1,\omega)})$ and the estimate
    \begin{multline*}
       \Bigg| \int_{-\infty}^{\infty} 
           \big( A_{0} + \sum_{j=1}^{n} \omega_{j}A_{j}
	   \big) \big( X^{\prime} + \sigma (1,\omega) \big)
             \thickspace \times \\
	     e^{i \int_{-\infty}^{\sigma} 
                  ( A_{0} + \sum_{j=1}^{n} \omega_{j}A_{j} )
	          ( X^{\prime} + s (1,\omega))
                \thinspace \mathrm{d} s
	        }
             \thinspace \mathrm{d} \sigma
       \Bigg| \le
       C(n,\Omega) ||| \Lambda_{1} - \Lambda_{2} |||
    \end{multline*}
    holds. The Fundamental Theorem of Calculus then gives (in the original coordinate system)
    \begin{equation}\label{eqn:58}
       \Bigg|
       \exp \Big[ 
	      i \int_{-\infty}^{\infty} 
           \big( A_{0} + \sum_{j=1}^{n} \omega_{j}A_{j}
	   \big) ( t + s, x + s\omega )
             \thinspace \mathrm{d} s 
            \Big] - 1
       \Bigg| \le
       C(n,\Omega) ||| \Lambda_{1} - \Lambda_{2} |||.
    \end{equation}

     In the case of uniqueness of the potentials it was very easy 
     to go from an expression concerning the above complex exponential
     to an expression involving only the ray transform of the function
     $ A_{0} + \sum_{j=1}^{n} \omega_{j} A_{j} $. In this case, obtaining  
     such an estimate is slightly harder and we need to assume that the 
     following condition holds:
     \begin{itemize}
        \item[\textit{iii)}] the supremum
           \begin{equation*}
 	     \alpha := 
 	     \mathrm{sup}_{(t,x;\omega)\in Q\times S^{n-1}}
 	         \Big| \int_{-\infty}^{\infty} \big( A_{0} + \sum_{j=0}^{n} \omega_{j} A_{j} \big) 
 		        (t+s,x+s\omega) \thinspace\mathrm{d}s \Big|
           \end{equation*}
 	  satisfies the inequality $\alpha < 2\pi $. 
     \end{itemize}
     Denoting by $\beta$ the integral 
     $\int_{-\infty}^{\infty} ( A_{0} + \sum_{j=1}^{n} \omega_{j} A_{j} )  
     (t+s,x+s\omega) \thinspace\mathrm{d}s $ 
     we get 
     \begin{equation} \label{eqn:59}
        \frac{\big| e^{i\beta} -1 \big|}{|\beta|} = 
	\frac{| \sin \frac{\beta}{2} |}{ \frac{|\beta|}{2}}.
     \end{equation}
     Also, condition $iii)$ gives $\dfrac{|\beta|}{2} < \dfrac{\alpha}{2} <\pi $ 
     and the right hand side of (\ref{eqn:59}) is bounded from below by a positive 
     constant $\dfrac{1}{C_{4}}$. We then have the estimate
    \begin{equation*}
       \Big| \int_{-\infty}^{\infty} 
           ( A_{0} + \sum_{j=1}^{n} \omega_{j}A_{j} ) 
	   ( t + s , x + s\omega )
             \thinspace \mathrm{d} s 
	     \Big| \le C_{4}
       \Big| e^{ i \int_{-\infty}^{\infty} 
           ( A_{0} + \sum_{j=1}^{n} \omega_{j}A_{j} ) 
	   ( t + s , x + s\omega )
             \thinspace \mathrm{d} s }
             - 1 \Big|,
    \end{equation*}
    which together with (\ref{eqn:58}) gives
    \begin{equation}\label{estim:abs:ray:integrals}
       \Big| \int_{-\infty}^{\infty} 
           ( A_{0} + \sum_{j=1}^{n} \omega_{j}A_{j} ) 
	   ( t + s , x + s\omega )
             \thinspace \mathrm{d} s 
	     \Big| \le 
       C(n,\Omega) ||| \Lambda_{1} - \Lambda_{2} |||.
    \end{equation}

    We next want to use (\ref{estim:abs:ray:integrals}) as well as the divergence
    condition imposed on the potentials to obtain an estimate for the potentials 
    $A_{j}$, $j=0,\dots,n$, following the ideas in Begmatov's paper \cite{Begmatov}. 
    
    If we let $F$ denote the ray transform of 
    $A_{0} + \sum_{j=1}^{n} \omega_{j} A_{j}$ along 
    light rays, we have 
    \begin{align}\nonumber
    \label{defn:function:F}
    F & : \ERRE{}_{t}\times\ERRE{n}_{x}\times S^{n-1} \to \ERRE{} \\
       F(t,x;\omega) & := 
          \int_{\ERRE{}} \big(A_{0} + \sum_{j=1}^{n} \omega_{j} A_{j} \big) 
	                 (t+s,x+s\omega) \thinspace\mathrm{d}s.
    \end{align}
    and by (\ref{estim:abs:ray:integrals})
    \begin{equation}\label{estimate:F}
       |F(t,x;\omega)| \le C(n,\Omega) ||| \Lambda_{1} - \Lambda_{2} |||
    \end{equation}
    for all $(t,x)\in\ERRE{}_{t}\times\ERRE{n}_{x}$, $\omega\in S^{n-1}$.

    The Fourier transform of $F$ in the variables $x_{1},\dots,x_{n}$ is 
    \begin{equation*}
     \big(\mathcal{F}_{(x\to\xi)}F(t,\cdot;\omega)\big)(\xi) = 
          \int_{\ERRE{n}} 
          e^{-i\xi\cdot x} 
	  \int_{\ERRE{}} \Big(A_{0} + \sum_{j=1}^{n} \omega_{j} A_{j} \Big) 
	                 (t+s,x+s\omega) \thinspace\mathrm{d}s
	              \thinspace \mathrm{d}x.
    \end{equation*}
    and the change of coordinates $\tilde{x}=x+s\omega$, $\tilde{t}=t+s$, with
    Jacobian 
     $\big| \frac{ \partial (\tilde{t},\tilde{x}) }{ \partial (t,x)} \big| = 1$
     leads to
    \begin{align*}
     \big(\mathcal{F}_{(x\to\xi)}F(t,\cdot;\omega)\big)(\xi)
       & = e^{-i(\omega\cdot\xi)t} 
          \int_{\ERRE{n}} 
	  \int_{\ERRE{}} 
	                 e^{-i\tilde{x}\cdot\xi}
			 \thinspace e^{-i(-\omega\cdot\xi)\tilde{t}} 
	                 \Big(A_{0} + \sum_{j=1}^{n} \omega_{j} A_{j} \Big) 
	                 (\tilde{t},\tilde{x}) \thinspace\mathrm{d}\tilde{t}
          \thickspace 
	              \mathrm{d}\tilde{x},
    \end{align*}
    where the right hand side of the above equation is the Fourier
    transform (in all variables) of $A_{0}+\sum_{j=1}^{n}\omega_{j}A_{j}$
    at the point $(-\omega\cdot\xi,\xi)$. This equation can be rewritten
    as
    \begin{equation*}
	e^{it\omega\cdot\xi}
        \big(\mathcal{F}_{(x\to\xi)}F(t,\cdot;\omega)\big)(\xi) =
	\Big(A_{0} + \sum_{j=1}^{n} \omega_{j} A_{j} \Big)^{\wedge} (-\omega\cdot\xi,\xi) 
    \end{equation*}
    and we realize that since the right hand side is independent of
    $t$, so must be the left hand side. In particular when $t=0$ we have
    \begin{equation}\label{Fourier:trans:indep:t}
	\Big(A_{0} + \sum_{j=1}^{n} \omega_{j} A_{j} \Big)^{\wedge} (-\omega\cdot\xi,\xi) =
        \big(\mathcal{F}_{(x\to\xi)}F(0,\cdot;\omega)\big)(\xi) =:
	G(\xi;\omega).
    \end{equation}
    Since the potentials $A_{j}$ are smooth and compactly supported,
    $F(0,\cdot;\cdot):\ERRE{n}_{x}\times S^{n-1}\to \ERRE{}$
    is also smooth and compactly supported\footnote{This is because
    for $|x|$ big enough the light rays with direction $(1,\omega)$ 
    emanating from the point $(0,x)$ do not intersect the support of
    the potentials $A_{j}$.}, 
    moreover (\ref{estimate:F}) shows that it is uniformly bounded by
    $C(n,\Omega)||| \Lambda_{1} - \Lambda_{2} |||$, hence
    \begin{align} \nonumber
	|G(\xi ; \omega) |
	& =  \Big|   \int_{\ERRE{n}} \thickspace
	             e^{-ix\cdot\xi} \thinspace F(0,x;\omega) \thinspace \mathrm{d}x \Big| \\ \nonumber
        & \le
	|| F(0,\cdot;\cdot)||_{L^{\infty}(\ERRE{n}_{x}\times S^{n-1})} \thinspace 
	\mathrm{Vol}(B_{n}(R)) \\ \label{estim:fourier:complement:cone}
	& \le
	    C(n,\Omega) R^{n} \thinspace |||\Lambda_{1} - \Lambda_{2} ||| ,
    \end{align}
    which tells us that $G$ is uniformly bounded in $\ERRE{n}_{\xi}\times S^{n-1}$.

    We now turn our attention to the Fourier transform of the potentials.
    We want to obtain an estimate for $|\widehat{A_{j}}(\tau,\xi)|$ on a conic set
    whose complement contains the light cone 
    $\{(\tau,\xi)\thinspace : \thinspace |\tau| < |\xi| \}$ and use this
    estimate as well as an analytic continuation argument to obtain 
    bounds for $|\widehat{A_{j}}(\tau,\xi)|$ in the full space 
    $\ERRE{}_{\tau}\times\ERRE{n}_{\xi}$.

    For $(\tau,\xi)$ fixed with $|\tau|<\frac{1}{2}|\xi|$ we know by
    considerations made in the previous section that we can find 
    unit vectors $\omega = \omega(\tau,\xi)$ parametrized by $rS^{n-2}$
    (an $(n-2)$-dimensional sphere with radious $r$, $\frac{\sqrt{3}}{2} \le r \le 1$),
    such that 
    $\tau + \omega(\tau,\xi)\cdot\xi = 0$ 
    and satisfying 
    $\omega(\theta\tau,\theta\xi) =  \omega(\tau,\xi)$  
    for any $\theta > 0$, i.e., $\omega(\tau,\xi)$ is homogenous
    of degree $0$ in $(\tau,\xi)$.

    We consider a maximal one dimensional sphere with radious $r$
    contained in $rS^{n-2}$ and choose unit vectors
    $\omega^{(1)}(\tau,\xi),\dots,\omega^{(n)}(\tau,\xi)$ 
    forming the vertices of a regular polygon with $n$ sides. We then 
    consider the following set of $n+1$ equations
    \begin{equation}\label{system:inhomo}
    \left\{
       \begin{aligned}
       \widehat{A_{0}}(\tau,\xi) + 
          \sum_{j=1}^{n} \omega_{j}^{(k)}(\tau,\xi)\widehat{A_{j}}(\tau,\xi) 
	  & = G\big(\xi;\omega^{(k)}(\tau,\xi)\big) 
	  \quad\quad k =1,\dots,n \\
       \frac{1} {\sqrt{\tau^{2}+|\xi|^{2}}} \Big( \tau \widehat{A_{0}}(\tau,\xi) 
       + 
          \sum_{j=1}^{n} 
       \xi_{j}\widehat{A_{j}}(\tau,\xi) \Big) & = 0,
       \end{aligned}
    \right.
    \end{equation}
    where the last equation is a simple consequence of the divergence condition
    $\partial_{t}A_{0}(t,x) + \sum_{j=1}^{n}\partial_{x_{j}}A_{j}(t,x) = 0$.
    Our goal is to show that this system is uniquely solvable for
    $(\widehat{A_{0}},\widehat{A_{1}},\dots,\widehat{A_{n}})$. 
    
    In order to prove this statement it suffices to show that the matrix
    \begin{equation*}
    M(\tau,\xi) = 
       \begin{pmatrix} 
           1 & \omega_{1}^{(1)}(\tau,\xi) & \dots & \omega_{n}^{(1)}(\tau,\xi) \\
           1 & \omega_{1}^{(2)}(\tau,\xi) & \dots & \omega_{n}^{(2)}(\tau,\xi) \\
           \hdotsfor[2]{4}\\
           1 & \omega_{1}^{(n)}(\tau,\xi) & \dots & \omega_{n}^{(n)}(\tau,\xi) \\
           \frac{\tau}{\sqrt{\tau^{2}+|\xi|^{2}}} & 
           \frac{\xi_{1}}{\sqrt{\tau^{2}+|\xi|^{2}}} 
	   & \dots & 
           \frac{\xi_{n}}{\sqrt{\tau^{2}+|\xi|^{2}}} 
       \end{pmatrix}
    \end{equation*}
    is invertible. Notice that the entries of $M(\tau,\xi)$ are homogeneous
    of degree $0$ in $(\tau,\xi)$ and the inverse, if it exists, will also
    have entries that are homogeneous of degree $0$ in $(\tau,\xi)$.

    To see that $M(\tau,\xi)$ is indeed invertible we show the homogeneous
    system
    \begin{equation}\label{system:homo}
    \left\{
       \begin{aligned}
       \widehat{A_{0}}(\tau,\xi) + 
          \sum_{j=1}^{n} \omega_{j}^{(k)}(\tau,\xi)\widehat{A_{j}}(\tau,\xi) 
	  & = 0 
	  \quad\quad k =1,\dots,n \\
       \frac{1} {\sqrt{\tau^{2}+|\xi|^{2}}} \Big( \tau \widehat{A_{0}}(\tau,\xi) 
       + 
          \sum_{j=1}^{n} 
       \xi_{j}\widehat{A_{j}}(\tau,\xi) \Big) & = 0,
       \end{aligned}
    \right.
    \end{equation}
    has no non-trivial solution.
    Once again, the considerations made in the previous section guarantee
    that the only potentials $\mathcal{A} = (A_{0},A_{1},\dots,A_{n})$
    satisfying the first $n$ equations are those of the form
    $\widehat{A_{0}}(\tau,\xi) = \Phi(\tau,\xi) \tau$,
    $\widehat{A_{j}}(\tau,\xi) = \Phi(\tau,\xi) \xi_{j}$, $j=1,\dots,n$,
    for some smooth function $\Phi$. The last equation in the above system 
    leads to $\Phi(\tau,\xi)\sqrt{\tau^{2} + |\xi|^{2}} = 0$ which in turn
    gives $\Phi \equiv 0$ and $\widehat{\mathcal{A}} = 0$.

    Since $M(\tau,\xi)$ is invertible we can write
    \begin{equation*}
       \widehat{A_{j}}(\tau,\xi)  = 
          \sum_{k=1}^{n} c_{k,j}(\tau,\xi) G\big(\xi;\omega^{(k)}(\tau,\xi)\big),\quad
	   1\le k \le n,\thickspace 0 \le j \le n,
    \end{equation*}
    for some $c_{k,j}(\tau,\xi)$ homogeneous of degree $0$ in $(\tau,\xi)$.
    This homogeneity property as well as the uniform boundedness of $G$ allows 
    us to compute an estimate for the Fourier transform of the potentials $A_{j}$
    in the ray $\{(\alpha\tau,\alpha\xi) \thinspace :\thinspace \alpha\in\ERRE{} \}$
    \begin{align}\nonumber
       \big| \widehat{A_{j}}(\alpha\tau,\alpha\xi) \big| 
          & = 
          \Big| \sum_{k=1}^{n} c_{k,j}(\alpha\tau,\alpha\xi) 
	        G\big(\alpha\xi;\omega^{(k)}(\alpha\tau,\alpha\xi)\big) \Big| \\ \nonumber
          & \le 
          \sum_{k=1}^{n} \big| c_{k,j}(\tau,\xi)  \big| 
	                 \big| G\big(\alpha\xi;\omega^{(k)}(\alpha\tau,\alpha\xi)\big) \big| \\ \label{non:uniform}
          & \le 
          C(n,\Omega) |||\Lambda_{1} - \Lambda_{2} ||| \sum_{k=1}^{n} \big| c_{k,j}(\tau,\xi)  \big|, 
    \end{align}
    where in the last line of the previous inequality we used (\ref{estim:fourier:complement:cone}).

    At this point it is convenient to recall that our initial goal is to obtain 
    a uniform bound for $\widehat{A_{j}}(\tau,\xi)$ in the set 
    $\{(\tau,\xi) \thinspace : \thinspace |\tau| \le \frac{|\xi|}{2}\}$.
    In view of (\ref{non:uniform}) it suffices to work on the compact set
    $\{(\tau,\xi) \thinspace : \thinspace \tau^{2}+|\xi|^{2}=1,
    \thickspace|\tau| \le \frac{|\xi|}{2}\}$.
    To obtain such a bound it is necessary to study the entries $c_{k,j}(\tau,\xi)$ 
    of the inverse of the matrix $M(\tau,\xi)$. It is a well know result that such 
    entries can be described by
    \begin{equation*}
       c_{k,j}(\tau,\xi) = \frac{1}{\det M(\tau,\xi)} \mathrm{C}_{j,k}(\tau,\xi)
    \end{equation*}
    where $\mathrm{C}_{j,k}(\tau,\xi)$ is the $(j,k)$-cofactor of $M(\tau,\xi)$.

    Since on the set 
    $\{(\tau,\xi) \thinspace : \thinspace \tau^{2}+|\xi|^{2}=1,
    \thickspace|\tau| \le \frac{|\xi|}{2}\}$
    all the entries of $M(\tau,\xi)$ have absolute value less or equal to one, 
    and since $\mathrm{C}_{j,k}(\tau,\xi)$ consists of sums of products of 
    $n$ such entries, we have
    \begin{equation*}
       | c_{k,j}(\tau,\xi) | 
       \le \frac{| \mathrm{C}_{j,k}(\tau,\xi) |}{| \det M(\tau,\xi) |} 
       \le \frac{n}{| \det M(\tau,\xi) |}.
    \end{equation*}

    The quantity 
    $|\det M(\tau,\xi)|$ 
    can be interpreted as the $(n+1)$-dimensional
    volume generated by the  set of vectors 
    $\{(1,\omega^{(1)}(\tau,\xi)),\dots,(1,\omega^{(n)}(\tau,\xi)),(\tau,\xi) \}$,
    however, since independent of the values $(\tau,\xi)$ the vectors
    $ \omega^{(1)}(\tau,\xi),\dots, \omega^{(n)}(\tau,\xi)$ 
    are chosen to be the vertices of a regular polygon, the $n$-dimensional
    volume $\mathrm{V}$ generated by 
    $\{(1,\omega^{(1)}(\tau,\xi)),\dots,(1,\omega^{(n)}(\tau,\xi))\}$,
    is constant. We then have 
    $|\det M(\tau,\xi)| = V \times \mathrm{P}(\tau,\xi)$ 
    where $\mathrm{P}(\tau,\xi)$ is the projection of $(\tau,\xi)$
    into the linear subspace generated by 
    $\{(1,\omega^{(1)}(\tau,\xi)),\dots,(1,\omega^{(n)}(\tau,\xi))\}$.
    This projection is given by $C\sin \varphi$ where $\varphi$
    is the angle between $(\tau,\xi)$ and said subspace. Since
    the vectors $(1,\omega^{(k)}(\tau,\xi))$, $k=1,\dots,n$, 
    are located in the boundary of the light cone (i.e., the 
    set 
    $\{(\tau,\xi) \thinspace : \thinspace |\tau| = |\xi| \}$),
    this angle is bounded below by $\frac{\pi}{8}$. Therefore
    on the set 
    $\{(\tau,\xi) \thinspace : \thinspace \tau^{2}+|\xi|^{2}=1,
    \thickspace|\tau| \le \frac{|\xi|}{2}\}$ the value
    $|\det M(\tau,\xi)|$ is uniformly bounded from below by
    $\mathrm{V} \sin \frac{\pi}{8}$ and
    \begin{equation*}
       | c_{k,j}(\tau,\xi) | 
       \le \frac{n}{V \sin \frac{\pi}{8}}.
    \end{equation*}
    
    These observations combined with (\ref{non:uniform}) give the uniform
    estimate
    \begin{align} \label{eq:estim:Aj}
       \big| \widehat{A_{j}}(\tau,\xi) \big| 
          & \le 
          C(n,\Omega) |||\Lambda_{1} - \Lambda_{2} ||| 
    \end{align}
    on the set 
    $\{(\tau,\xi) \thinspace : \thinspace |\tau| \le \frac{|\xi|}{2}\}$. 
%

    Our next step is to obtain an upper bound for 
    $ \widehat{A_{j}}(\tau,\xi)$, $j=0,\dots,n,$
    on the complement of 
    $\{(\tau,\xi) \thinspace : \thinspace |\tau| \le \frac{|\xi|}{2}\}$. 
    To obtain such estimate we first fix $\tau$ and compute upper bounds for all lines
    that pass through the origin and are contained in the hyperplane $\tau=\tau_{0}$. 

    We will consider the case where the line corresponds to the $\xi_{n}$-axis,
    but before doing so, we will need some auxiliary results.

    \begin{lemma}
    Consider the strip 
    \begin{equation*}
    S = \{ z = z_{1} + iz_{2} \thinspace: \thinspace z_{1}\in\ERRE{}, 
           |z_{2}|< 2|\tau_{0}|\pi, \tau_{0}\neq 0\}
    \end{equation*}
    and the rays
    \begin{equation*}
     p_{1} = \{ z \thinspace: \thinspace -\infty < z_{1} \le -2|\tau_{0} |, z_{2}=0\}, \qquad
     p_{2} = \{ z \thinspace: \thinspace 2|\tau_{0}| \le z_{1} < \infty, z_{2}=0\} 
    \end{equation*}
    in the complex plane $\mathbb{C}$. \\
    If $E = p_{1} \cup p_{2}$ and $G=S\setminus E$ is the 
    strip with cuts along the rays $p_{1}$ and $p_{2}$, we have
    \begin{equation}\label{estim:harmon:measure}
       \frac{2}{3} < \varpi(z,E,G) \le 1,
    \end{equation}
    where $\varpi(z,G,E)$ is the harmonic measure of $E$ with respect to $G$.
    \end{lemma}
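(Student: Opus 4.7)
The upper bound $\varpi(z,E,G)\le 1$ is immediate from the maximum principle: by definition $\varpi(\,\cdot\,,E,G)$ is the unique bounded harmonic function on $G$ with boundary value $1$ on $E$ and $0$ on the two horizontal lines $\{z_{2}=\pm 2|\tau_{0}|\pi\}$ which form the remainder of $\partial G$, so its values in $G$ lie in $[0,1]$.

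For the lower bound I would exploit the conformal invariance of harmonic measure together with the symmetries of $G$. First, the dilation $w=z/|\tau_{0}|$ is a biholomorphism between $G$ and the \emph{normalized} slit strip $\widetilde{G}=\{|\mathrm{Im}\,w|<2\pi\}\setminus\bigl((-\infty,-2]\cup[2,\infty)\bigr)$; since harmonic measure is invariant under conformal maps, we may and do assume $|\tau_{0}|=1$. Both $\widetilde{G}$ and the image of $E$ are invariant under the reflections $w\mapsto\overline{w}$ and $w\mapsto -w$, so $\varpi$ inherits these symmetries. In particular it suffices to estimate $\varpi$ on the first-quadrant portion $\{\mathrm{Re}\,w\ge 0,\ \mathrm{Im}\,w\ge 0\}\cap\widetilde{G}$, on whose symmetry axes (the positive imaginary axis in $\widetilde G$ and the segment $[0,2]$ on the real axis) the reflection principle yields Neumann boundary conditions.

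Next I would map this mixed boundary-value region onto a canonical domain where the harmonic measure admits an explicit representation. A natural choice is a $\sinh$-type map such as $\zeta=\sinh(w/4)$, which sends the strip $\{|\mathrm{Im}\,w|<2\pi\}$ onto the plane cut along the two vertical rays $\{\pm i y: y\ge 1\}$ and carries the two horizontal cuts of $\widetilde{G}$ to two horizontal rays $(-\infty,-\sinh\tfrac{1}{2}]\cup[\sinh\tfrac{1}{2},\infty)$ on the real axis; alternatively one can use a Schwarz--Christoffel map onto a rectangle. In either target, $\varpi$ can be written as a quotient of complete elliptic integrals $K(k)/K'(k)$ with modulus $k$ determined by the geometric ratio $2/(2\pi)=1/\pi$, and the inequality $\varpi>2/3$ then reduces to a concrete numerical inequality on elliptic integrals. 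If one is willing to work slightly less explicitly, a two-sided estimate can instead be obtained by constructing sub- and super-harmonic barriers in $\widetilde G$ built from $\arg\sinh$ and $\arctan$ of affine functions of $w$, and then comparing.

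The principal obstacle is the sharp numerical constant $2/3$: a generic conformal-mapping argument only produces \emph{some} positive lower bound, but nailing down $2/3$ requires either an exact elliptic-integral identity or a carefully tuned barrier. A secondary subtlety is that the claim $\varpi>2/3$ cannot hold uniformly on all of $G$ (it must fail as $z\to\{z_{2}=\pm 2|\tau_0|\pi\}$, where $\varpi\to 0$), so the statement is implicitly asserting the bound on the subset of $G$ where the lemma will be applied in the subsequent Phragm\'en--Lindel\"of/analytic-continuation argument — presumably a set bounded away from the horizontal sides of the strip, such as the interval between the cuts on the real axis or a compact subregion around it; keeping this range in view is important when setting up the comparison.
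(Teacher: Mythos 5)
Your overall strategy --- conformal invariance of harmonic measure plus an explicit map to a canonical domain --- is the same as the paper's, and your closing observation is exactly right and worth stressing: the inequality $\varpi>2/3$ cannot hold on all of $G$ (it degenerates near the horizontal sides), and the paper in fact only establishes it on the interval $I=\{|z_{1}|<a,\ z_{2}=0\}$ between the two cuts, which is where the lemma is later applied (to the segment $-2|\tau_{0}|<\nu<2|\tau_{0}|$ of the real $\xi_{n}$-axis). Where you diverge is in the choice of canonical domain. The paper maps the slit strip directly onto the upper half-plane via
\begin{equation*}
\zeta(z)=\left(\frac{e^{z\pi/h}-e^{a\pi/h}}{e^{z\pi/h}-e^{-a\pi/h}}\right)^{1/2},
\end{equation*}
under which $I$ goes to the positive imaginary axis and $E$ to an explicit union of real intervals; the Poisson integral then gives $\varpi$ in closed form as $\tfrac{2}{\pi}\bigl(\tfrac{\pi}{2}-\arctan\tfrac{\zeta_{2}(e^{a\pi/h}-1)}{\zeta_{2}^{2}+e^{a\pi/h}}\bigr)$, and with the normalization $h=a\pi$ the bound $2/3$ follows from the elementary inequality $\arctan\tfrac{e-1}{2\sqrt{e}}<\tfrac{\pi}{6}$ (maximizing the arctan argument over $\zeta_{2}>0$ by AM--GM). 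This buys a completely elementary finish. Your route through a rectangle/Schwarz--Christoffel map or a $\sinh$-type map leaves the decisive step --- verifying that the resulting quotient of elliptic integrals, or the tuned barrier, actually clears $2/3$ --- unperformed, and that is the one genuinely quantitative part of the lemma; ``some positive lower bound'' is not enough for the two-constant argument that follows, where the exponents $2/3$ and $1/3$ propagate into the final stability estimate. So the gap is concrete: you have reduced the claim to a numerical inequality you do not prove, whereas the half-plane/Poisson-kernel reduction makes that inequality a one-line calculus check. If you adopt the paper's map, your symmetry reductions become unnecessary, since the Poisson formula already evaluates $\varpi$ exactly on the image of $I$.
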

    This statement is a very well known result about harmonic measures, its
    proof is mostly taken from \cite{Begmatov} and it is included here for 
    the purpose of self contention.
    \begin{proof}
        For $h>0$, the map 
	\begin{equation*}
	   \zeta(z) = \left( \frac{\exp(z\pi/h) - \exp(a\pi/h)}
	                          {\exp(z\pi/h) - \exp(-a\pi/h) } \right)^{\frac{1}{2}}
	\end{equation*}
	comformally transforms $\overline{G}$ into the upper half plane
	$\mathbf{H}^{+} = \{ \zeta = \zeta_{1} + i \zeta_{2} \thinspace : \thinspace 
	\zeta_{1} \in \ERRE{}, \zeta_{2} \ge 0 \}$. Under this mapping,
	the interval $I = \{ z\thinspace :\thinspace |z_{1}|<a, z_{2}=0\}$
	transforms into the imaginary half axis $\{\zeta \thinspace : 
	\thinspace \zeta_{1} = 0, \zeta_{2} >0 \}$. The boundary of $G$
	goees into the real axis and the set $E = p_{1} \cup p_{2}$ 
	tranforms into the subset of the real axis
	\begin{equation*}
	   E_{1} = 
	   \{ \zeta \thinspace : \thinspace \zeta_{1} \le -e^{a\pi/h}, \zeta_{2} =0 \} \cup
	   \{ \zeta \thinspace : \thinspace |\zeta_{1}| \le 1, \zeta_{2} =0 \} \cup
	   \{ \zeta \thinspace : \thinspace \zeta_{1} \ge e^{a\pi/h}, \zeta_{2} =0 \}.
	\end{equation*}
	Then by the harmonic principle (see \cite{Bukhgeim16}), the values of the harmonic
	measures on $E$, $E_{1}$ with respect to the sets $G$, $\mathbf{H}^{+}$ agree, this is
	\begin{equation*}
	   \varpi(z,E,G) = \varpi(\zeta(z),E_{1},\mathbf{H}^{+}).
	\end{equation*}
	We also know that the harmonic measure on the right hand side of the 
	previous equation can be constructed by means of the Poisson integral
	for the upper half plane
	\begin{equation}\label{eqn:Poisson:int}
	   \varpi(\zeta) = \frac{1}{\pi} \int_{-\infty}^{\infty}
	                 \chi_{E_{1}}(t) \frac{\zeta_{2}}{(t-\zeta_{1})^{2}+\zeta_{2}^{2}}
			 \thinspace\mathrm{d}t
	\end{equation}
	where $\chi_{E_{1}(t)}$ is the characteristic function of $E_{1}$.

	Since we are interested in the image of $I$ under the map $\zeta(z)$ and since
	this image is precisely the positive imaginary axis, we may assume without loss
	of generality that $\zeta= i \zeta_{2}$, $\zeta_{2}>0$. From 
	(\ref{eqn:Poisson:int}) we obtain
	\begin{equation}\label{harmonic:general}
	     \varpi(\zeta(z),E_{1},\mathbf{H}^{+}) = \frac{2}{\pi} 
	     \left(
	        \frac{\pi}{2} -\arctan \frac{\zeta_{2} (\exp(a\pi/h) -1 ) }{(\zeta_{2})^{2} + \exp(a\pi/h)}
	     \right)
	\end{equation}
	Choosing $h=a\pi$ and using the inequality 
	\begin{equation}
	   \arctan \frac{\zeta_{2} ( e -1 ) }{(\zeta_{2})^{2} + e } \le
	   \arctan \frac{ (e -1 ) }{2 e^{\frac{1}{2}} } 
	\end{equation}
	we obtain
	\begin{equation*}
	     \frac{2}{\pi}\left(
	        \frac{\pi}{2} -\arctan \frac{ (e -1 ) }{2 e^{\frac{1}{2}}} 
	     \right)
		\le \varpi \le 1
	\end{equation*}
	and we conclude that
	\begin{equation*}
	     \frac{2}{3} \le \varpi(z,E,G) \le 1.
	\end{equation*}
    \end{proof}
    
    Based on this result we want to `embed' the $\xi_{n}$-axis into said strip and use the 
    bounds on the harmonic measure. To do so we realize that since the potentials $A_{j}$,
    $j=0,\dots,n$, are 
    compactly supported, the functions
    $  \widehat{A_{j}} (\tau_{0},\xi) $ 
    admits an analytic extension in $\xi_{n}$ into the complex plane. If we let
    \begin{gather*}
    \Pi = \{ \nu = (\nu_{1},\nu_{2}) \thinspace: \thinspace \nu_{1}\in\ERRE{}, 
           |\nu_{2}|< 2|\tau_{0}|\pi, \thinspace \tau_{0} \neq 0 \}, \\
     q_{1} = \{ \nu = (\nu_{1},\nu_{2}) \thinspace: \thinspace -\infty < \nu_{1} \le -2|\tau_{0} |, \nu_{2}=0\}, \\
     q_{2} = \{ \nu = (\nu_{1},\nu_{2}) \thinspace: \thinspace 2|\tau_{0}| \le \nu_{1} < \infty, \nu_{2}=0\} 
    \end{gather*}
    and restrict ourselves to the $\xi_{n}$-axis (i.e., $\xi_{1}=\cdots=\xi_{n-1}=0$),
    the estimate (\ref{estim:harmon:measure}) leads to
    \begin{equation*}
       \frac{2}{3} < \varpi(\nu,E_{1},G_{1}) \le 1,
    \end{equation*}
    where $E_{1} = q_{1} \cup q_{2}$ and $G_{1}=\Pi\setminus E_{1}$.

    Denoting by  $ v_{j}(\nu) = \widehat{A_{j}}
       (2\tau_{0},0,\dots,0,\nu), $
    the restriction of $\widehat{A}_{j}$ to the $\xi_{n}$-axis, 
    we have by the two-constant theorem (see \cite{Krantz} Theorem 9.4.5)
    \begin{equation}\label{two:constant:ineq}
        |v_{j}(\nu) | \le m_{j}^{\frac{2}{3}} M_{j}^{\frac{1}{3}} 
    \end{equation}
    where $m_{j}$ and $M_{j}$ are the respective upper bounds of the modulus of
    $v(\nu)$ on the rays $q_{1}$ and $q_{2}$ and on the lines
    $\{ (\nu_{1},\nu_{2}) \thinspace : \thinspace \nu_{1}\in\ERRE{}, \nu_{2} = -2|\tau_{0}|\pi \}$
    and
    $\{ (\nu_{1},\nu_{2}) \thinspace : \thinspace \nu_{1}\in\ERRE{}, \nu_{2} =  2|\tau_{0}|\pi \}$.

    At this point it is worth to point out that the rays $q_{1}$ and $q_{2}$ 
    are contained in the set 
    $\{(\tau,\xi) \thinspace:\thinspace |\tau | \le \frac{|\xi|}{2} \}$ 
    and that we have already 
    computed an estimate
    for $|v_{j}(\nu)|$ in that region (equation \ref{eq:estim:Aj}). 
    To compute $M_{j}$ we resort to the 
    identities
    \begin{equation*}
       v_{j}(\nu) =  C(\pi,n) \int_{\ERRE{}} 
              e^{-i (\nu_{1}+i\nu_{2}) x_{n}} 
	      W_{j}(2\tau_{0},0,\dots,0,x_{n}) \thinspace \mathrm{d}x_{n}
    \end{equation*}
    with $W_{j}$ the Fourier transform of $A_{j}$ in all variables except $x_{n}$.
    Next, we realize that these functions are compactly supported in $x_{n}$ and the above
    integrand is nonzero only on a finite subset of the real numbers. Hence
    \begin{equation*}
       |v_{j}(\nu) | \le \mathrm{sup}_{x_{n}\in(-a(\Omega),a(\Omega))} 
                          | W_{j}(2\tau_{0},0,\dots,0,x_{n}) |
                     \int_{-a(\Omega)}^{a(\Omega)} 
		     e^{2|\tau_{0}| \pi x_{n}} \mathrm{d}x_{n},
    \end{equation*}
    where $a(\Omega)$ is a positive number bigger than $\mathrm{diam}(\Omega)$.
    Since $\nu = \nu_{1} + i\nu_{2}$ is restricted to the strip $\Pi$ we 
    have
    \begin{equation*}
       |v_{j}(\nu) | \le C(\Omega,n) \frac{e^{2|\tau_{0}|\pi}}{ 2|\tau_{0}| \pi},
    \end{equation*}
    and in particular when 
    $\nu$ is a real number satisfying $-2|\tau_{0}| < \nu < 2|\tau_{0}|$ we have by
    (\ref{two:constant:ineq})
    \begin{equation*}
       |v_{j}(\nu) | \le C(\Omega,n) \frac{e^{\frac{2|\tau_{0}|\pi }{3} } 
                         m_{j}^{\frac{2}{3} }}{ 2|\tau_{0}|^{\frac{1}{3}} }.
    \end{equation*}

    All this arguments can be carried out to the case where a line is contained in 
    the hyperplane $\tau = \tau_{0}$, passes through the origin but is not parallel 
    to any of the axes.
    Finally using (\ref{eq:estim:Aj}) 
    we obtain an estimate for the Fourier
    transform of the potentials in the set
    $\{(\tau,\xi) \thinspace:\thinspace |\tau | > \frac{|\xi|}{2} \}$,
    namely
    \begin{equation} \label{estim:fourier:cone}
    |\widehat{A_{j}}(\tau,\xi)|
        \le C(\Omega,n) \frac{ e^{ \frac{2|\tau|\pi}{3} } 
	|||\Lambda_{1}-\Lambda_{2}|||^{\frac{2}{3} }  }
	{ |\tau|^{\frac{1}{3}} }.
    \end{equation}
    
     From estimates (\ref{eq:estim:Aj}) and (\ref{estim:fourier:cone}) we can 
     establish the desired 
     estimate for the vector potentials. The general idea is to 
     use the inequality 
    $||f||_{L^{\infty}} \le || \widehat{f} \thinspace ||_{L^{1}}$ 
    and partition $\ERRE{}_{\tau}\times\ERRE{n}_{\xi}$ in 
    an appropriate way.

    From the Fourier inversion formula we have
    \begin{equation}\label{eqn:star}
       A_{j}(t,x) = 
          C(\pi) \iint_{\ERRE{}_{\tau}\times\ERRE{n}_{\xi}} 
          e^{i(t\tau + x\cdot\xi)} \widehat{A_{j}}(\tau,\xi)
	  \thinspace\mathrm{d}\tau\mathrm{d}\xi
    \end{equation}
    and by taking absolute values we have for any $\rho_{1}>0$
    \begin{align*}
       \big| A_{j}(t,x) \big|
       & \le \thinspace C(\pi) \iint_{\ERRE{}_{\tau}\times\ERRE{n}_{\xi}} 
            \big| \thinspace 
	    \widehat{A_{j}}(\tau,\xi)
	    \thinspace\big|
	    \thinspace\mathrm{d}\tau\mathrm{d}\xi \\
       & \le \thinspace C(\pi) \iint_{B(\rho_{1})} 
            \big| \thinspace 
	    \widehat{A_{j}}(\tau,\xi)
	    \thinspace\big|
	    \thinspace\mathrm{d}\tau\mathrm{d}\xi \\
       & \phantom{\le} + \thinspace C(\pi) \iint_{B(\rho_{1})^{c}} 
            \big| \thinspace 
	    \widehat{A_{j}}(\tau,\xi)
	    \thinspace\big|
	    \thinspace\mathrm{d}\tau\mathrm{d}\xi \\
      & = I_{1} + I_{2},
    \end{align*}
    where $B(\rho_{1})$ denotes the $(n+1)$-dimensional ball
    $B(\rho_{1})= \{ (\tau,\xi) \thinspace : \thinspace 
    |\tau|^{2} + |\xi|^{2} \le \rho_{1}^{2}\}$.

    To obtain a bound for $I_{2}$ we recall that the potentials 
    $A_{j}$, $j=1,\dots,n$, are $C^{\infty}$ in $t$ and $x$. Hence for 
    any $\beta > 0$ and any 
    $\rho_{1}>0$ if $|\tau|^{2} + |\xi|^{2} \le \rho_{1}^{2}$
    \begin{equation*}
       \big| 
       \widehat{A_{j}}(\tau,\xi)
       \big| 
       \le \frac{C}
       { (|\tau|^{2} + |\xi|^{2} )^{\frac{\beta}{2}} }.
    \end{equation*}
    If $\beta > n+2$ the integral $I_{2}$ converges. Moreover, the estimate
    \begin{equation}\label{estability:one:over:rho}
       I_{2} = \iint_{B(\rho_{1})^{c}} 
       \big| 
       \widehat{A_{j}}(\tau,\xi)
       \big| 
          \mathrm{d}\tau\mathrm{d}\xi
       \le \frac{C(n)}
       { \rho_{1}^{\beta - n - 1} }
       \le \frac{C(n)}
       { \rho_{1} }
    \end{equation}
    holds.

    To estimate $I_{1}$ we break up the ball $B(\rho_{1})$ into two smaller pieces
    \begin{equation*}
     \mathcal{C}_{1} = 
     B(\rho_{1}) \cap \Big\{ (\tau,\xi) \thinspace: \thinspace |\tau| < \frac{\rho_{1}}{\sqrt{5}} \Big\}
     \quad \text{and} \quad
     \mathcal{C}_{2} = 
     B(\rho_{1}) \cap \Big\{ (\tau,\xi) \thinspace: \thinspace |\tau| \ge \frac{\rho_{1}}{\sqrt{5}} \Big\}.
    \end{equation*}

    Then 
    \begin{equation*}
       I_{1} \le 
       \iint_{\mathcal{C}_{1}} 
       \big| 
          \widehat{A_{j}}(\tau,\xi)
       \big| 
          \mathrm{d}\tau\mathrm{d}\xi +
       \iint_{\mathcal{C}_{2}} 
       \big| 
          \widehat{A_{j}}(\tau,\xi)
       \big| 
          \mathrm{d}\tau\mathrm{d}\xi,
    \end{equation*}
    and since $\mathcal{C}_{1}$ is a compact subset of $B(\rho_{1})$ we have
    \begin{equation*}
       I_{1} \le C \rho^{n+1} +
       \iint_{C_{2}} 
       \big| 
          \widehat{A_{j}}(\tau,\xi)
       \big| 
          \mathrm{d}\tau\mathrm{d}\xi.
    \end{equation*}
    The advantage of this decomposition is that $\mathcal{C}_{2}$ is contained in the
    set $\{(\tau,\xi) \thinspace:\thinspace |\tau | > \frac{|\xi|}{2} \}$ and 
    that on this set $|\tau|$ is bounded below by $\dfrac{\rho_{1}}{\sqrt{5}}$. Thus 
    by (\ref{estim:fourier:cone})
    \begin{equation}\label{estability:two:constant}
       I_{2} \le C \rho^{n+1} +
       \frac{C(\Omega,n) \thinspace 
             e^{\frac{2\rho\pi}{3}} 
             ||| \Lambda_{1} - \Lambda_{2} |||^{\frac{2}{3}} \rho^{n+1}}
             { \rho^{\frac{1}{3}}}
    \end{equation}
    where we have set $\rho = \dfrac{\rho_{1}}{\sqrt{5}}$.

    Equations (\ref{eqn:star})-(\ref{estability:two:constant}) 
    lead to
    \begin{align}\label{eqn:two:stars} \nonumber
       \big| 
          A_{j}(t,x)
       \big| 
       & \le \thinspace C(\Omega,n) \Big[
             \frac{1}{\rho} + \rho^{n+1} + 
             \rho^{n+\frac{2}{3}} \thinspace e^{\frac{2\rho\pi}{3}} 
	            ||| \Lambda_{1} - \Lambda_{2} |||^{\frac{2}{3}} 
	     \Big] \\
       & \le \thinspace C(\Omega,n) \Big[
             \frac{1}{\rho} + 
                    \rho^{n+\frac{2}{3}} \thinspace e^{\frac{2\rho\pi}{3}} 
	            ||| \Lambda_{1} - \Lambda_{2} |||^{\frac{2}{3}} 
	     \Big].
    \end{align}

    Now the idea is to choose $\rho$ small enough so that the two terms in the 
    the right hand side of (\ref{eqn:two:stars}) are comparable. In other words
    we want $\rho$ to satisfy the identity
    \begin{equation*}
       \frac{C}{\rho} = 
              \rho^{n+\frac{2}{3}} \thinspace e^{\frac{2\rho\pi}{3}} 
	            ||| \Lambda_{1} - \Lambda_{2} |||^{\frac{2}{3}} 
    \end{equation*}
    for some constant $C$. By taking logarithms on both sides of the previous 
    equation we obtain the equivalent identity
    \begin{equation}\label{eqn:three:stars}
       2 \log \frac{C} { \thinspace ||| \Lambda_{1} - \Lambda_{2} ||| } 
       = (3n+5) \log \rho + 2\pi\rho,
    \end{equation}
    and since the right hand side of (\ref{eqn:three:stars}) is one to one when
    $\rho > 0 $ we know that it admits a unique solution.

    On the other hand, the inequality $\log\rho \le \rho$ for positive $\rho$ 
    as well as (\ref{eqn:three:stars}) lead to
    \begin{equation*}
       2 \log \frac{ C }
	         { \thinspace ||| \Lambda_{1} - \Lambda_{2} ||| } 
       \le (3n+5 + 2\pi)  \rho, 
    \end{equation*}
    or
    \begin{equation*}
       \frac{1}{\rho} \le 
             \frac{3n+5+2\pi}{2} \Bigg[ \log 
             \frac{ C }
	          { ||| \Lambda_{1} - \Lambda_{2} |||  }
       \Bigg]^{-1}
    \end{equation*}
    and equation (\ref{eqn:two:stars}) becomes
    \begin{equation*} 
       \big| 
          A_{j}(t,x)
       \big| 
       \le C(\Omega,n)\Bigg[ \log 
             \frac{ C }
	          { ||| \Lambda_{1} - \Lambda_{2} |||  }
       \Bigg]^{-1}.
    \end{equation*}

    Summarizing, we have proved the following
    \begin{thm}\label{stability:theorem}
       Suppose that the vector potentials 
       $\mathcal{A}^{(l)}=(A^{(l)}_{0},\dots,A^{(l)}_{n})$, $l=1,2,$ 
       are real valued, compactly supported and 
       $C^{\infty}$ in $t$ and $x$. 
       Let $ \mathcal{A} = (A_{0},A_{1},\dots,A_{n})$
       where $ A_{j} = A_{j}^{(1)} - A_{j}^{(2)} $ 
       and suppose that the divergence condition 
       \begin{equation*}
          \mathrm{div}\thinspace\mathcal{A} = 
	     \partial_{t}A_{0}(t,x) + 
	     \sum_{j=1}^n{} \partial_{x_{j}}A_{j}(t,x) = 0
       \end{equation*}
       holds.
       If $\Lambda_{l}$ represents the Dirichlet to Neumann
       operator associated to the hyperbolic problem (1)-(4), 
       then the stability estimate 
    \begin{equation}\label{eqn:four:stars}
       \max_{0\le j \le n}\Big|\Big| 
       A^{(1)}_{j}(t,x) - A^{(2)}_{j}(t,x) 
       \Big|\Big|_{L^{\infty}(\ERRE{}_{t}\times\ERRE{n}_{x})}
       \le C(\Omega,n)\Bigg[ \log 
             \frac{ C }
	          { ||| \Lambda_{1} - \Lambda_{2} |||  }
       \Bigg]^{-1}
    \end{equation}
    holds for $\Lambda_{1}$, $\Lambda_{2}$ satisfying $|||\Lambda_{1} - \Lambda_{2} ||| << 1$.
    \end{thm}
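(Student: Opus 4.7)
The plan is to assemble the chain of estimates developed in the paragraphs preceding the statement into a single bound on $|A_j(t,x)|$. In broad strokes: the Green's formula combined with the geometric optics construction provides a pointwise bound on $\widehat{A_j}(\tau,\xi)$ in the region $\{|\tau|\le|\xi|/2\}$; an analytic continuation in one spectral variable, driven by Paley--Wiener plus the two-constants theorem, extends a weaker bound to the complementary cone; and a final Fourier inversion with an optimized frequency cutoff produces the logarithmic estimate. Throughout, $C(\Omega,n)$ denotes a constant depending only on $\Omega$, $n$, and an \emph{a priori} bound on the potentials; its value changes from line to line.

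I would first collect the low-frequency estimate. Inserting the forward/backward GO solutions into (\ref{eqn:GF}), dividing by $k$ and letting $k\to+\infty$ gives (\ref{eqn:58}); the assumption $\alpha<2\pi$ then converts this into the ray-transform bound (\ref{estim:abs:ray:integrals}) for $F(t,x;\omega)$, whose partial Fourier transform $G(\xi;\omega)$ in $x$ satisfies the uniform bound (\ref{estim:fourier:complement:cone}) thanks to the support condition on $\mathcal{A}$. For fixed $(\tau,\xi)$ with $|\tau|\le\tfrac12|\xi|$ I would select $n$ directions $\omega^{(1)},\dots,\omega^{(n)}$ equidistributed on a maximal circle of the sphere $rS^{n-2}$ parametrizing the solutions of $\tau+\omega\cdot\xi=0$, and form the linear system (\ref{system:inhomo}), whose last row is the divergence condition (\ref{condition:divergence}). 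Invertibility of $M(\tau,\xi)$ follows from (\ref{system:homo}); the regular-polygon arrangement of the $\omega^{(k)}$ yields the uniform lower bound $|\det M(\tau,\xi)|\ge V\sin(\pi/8)$ on the unit sphere intersected with this cone. Cramer's rule combined with (\ref{estim:fourier:complement:cone}) then gives the uniform bound (\ref{eq:estim:Aj}) on $\{|\tau|\le|\xi|/2\}$.

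The main obstacle is the high-frequency region $\{|\tau|>|\xi|/2\}$, where the linear-algebra argument degenerates because the vectors $(1,\omega^{(k)})$ and $(\tau,\xi)$ become nearly coplanar. Here I would run the analytic continuation: fixing $\tau_0$, the function $\nu\mapsto\widehat{A_j}(2\tau_0,0,\dots,0,\nu)$ is entire, in the strip $\Pi$ of half-width $2|\tau_0|\pi$ it satisfies the Paley--Wiener bound $M_j\le C(\Omega,n)\,e^{2|\tau_0|\pi}/(2|\tau_0|\pi)$, while on the real rays $q_1,q_2$ (which lie inside $\{|\tau|\le|\xi|/2\}$) the bound $m_j\le C(\Omega,n)\,|||\Lambda_1-\Lambda_2|||$ already applies. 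The two-constants theorem together with the harmonic-measure estimate $\tfrac23<\varpi(\nu,E_1,G_1)\le 1$ then gives $|v_j(\nu)|\le m_j^{2/3}M_j^{1/3}$, which is (\ref{estim:fourier:cone}); rotating this one-dimensional argument through all lines through the origin in the hyperplane $\tau=\tau_0$ and varying $\tau_0$ covers the entire high-frequency cone.

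Finally, I would apply Fourier inversion, split the integral at a ball $B(\rho_1)$, and estimate the tail via $C^\infty$ regularity of $A_j$ (any polynomial decay of $\widehat{A_j}$) to get $I_2\le C/\rho_1$. The interior integral is further split into the portions lying in the low- and high-frequency cones, to which (\ref{eq:estim:Aj}) and (\ref{estim:fourier:cone}) respectively apply, yielding the pointwise bound
\begin{equation*}
|A_j(t,x)|\le C(\Omega,n)\left[\tfrac{1}{\rho}+\rho^{n+2/3}\,e^{2\rho\pi/3}\,|||\Lambda_1-\Lambda_2|||^{2/3}\right].
\end{equation*}
Optimizing $\rho$ by equating the two terms produces the transcendental equation (\ref{eqn:three:stars}); the elementary inequality $\log\rho\le\rho$ yields $\rho^{-1}\le C[\log(C/|||\Lambda_1-\Lambda_2|||)]^{-1}$, which is (\ref{eqn:four:stars}). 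The most delicate point is balancing the exponential factor $e^{2\rho\pi/3}$ coming from Paley--Wiener against the $|||\Lambda_1-\Lambda_2|||^{2/3}$ factor in the high-frequency piece; this is exactly what forces the stability to be logarithmic rather than H\"older, and it requires the smallness hypothesis $|||\Lambda_1-\Lambda_2|||\ll 1$ so that the optimizing $\rho$ is positive.
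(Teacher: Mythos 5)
Your outline reproduces, step by step, exactly the argument the paper develops in Section \ref{sec:stability:estimates}: the low-frequency bound (\ref{eq:estim:Aj}) on $\widehat{A_j}$ obtained by inverting the system (\ref{system:inhomo}) built from the ray-transform data and the divergence condition, the extension (\ref{estim:fourier:cone}) into the cone $\{|\tau|>|\xi|/2\}$ via Paley--Wiener bounds, the harmonic-measure lemma and the two-constants theorem, and the final Fourier inversion with the cutoff $\rho$ optimized through (\ref{eqn:three:stars}). So the route is the same as the paper's; the paper's own proof environment for the theorem is only a short remark because all of this machinery precedes the statement.

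That remark, however, is exactly what your proposal is missing, and it is a genuine gap. You invoke ``the assumption $\alpha<2\pi$'' to pass from (\ref{eqn:58}) to (\ref{estim:abs:ray:integrals}), but condition \textit{iii)} is not among the hypotheses of the theorem: only compact support, smoothness, real-valuedness and $\mathrm{div}\,\mathcal{A}=0$ are assumed. Without some control on $\alpha$ the step fails concretely: if the ray integral $\beta$ is close to $2\pi r$ for a nonzero integer $r$, then $|e^{i\beta}-1|$ is small while $|\beta|$ is not, so the lower bound $|e^{i\beta}-1|/|\beta|\ge 1/C_4$ used to derive (\ref{estim:abs:ray:integrals}) breaks down. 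The paper's proof consists precisely of noting that conditions \textit{i)} and \textit{ii)} follow from the hypotheses while \textit{iii)} does not, and then removing \textit{iii)} by rescaling $A_j\to A_j/\alpha$ (with a corresponding rescaling of the coordinate axes) and transferring the resulting estimate back to the original potentials. You need either to supply this reduction (and check how $|||\Lambda_1-\Lambda_2|||$ behaves under the rescaling) or to add $\alpha<2\pi$ explicitly as a hypothesis; as written, your argument proves a strictly weaker statement than the one claimed.
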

    \begin{proof}
         The hypothesis of the theorem guarantee that conditions 
	 \textit{i)} and \textit{ii)} hold and the only condition that is not automatically 
	 satisfied from the hypothesis is condition \textit{iii)}. 
	 However a simple rescaling of the vector potentials
	 $A_{j} \to A^{\prime}_{j} = \frac{1}{\alpha}A_{j}$, with $\alpha$ the supremum 
	 of all ray integrals of the potentials as well as a similar rescaling of the 
	 coordinate axis show that the estimate (\ref{eqn:four:stars}) holds for 
	 the potentials $A^{\prime}_{j}$. In turn, this implies a similar estimate for
	 the original potentials $A_{j}$.
    \end{proof}

\section{Presence of obstacles}\label{sec:proofs:part2}

     One variation of the above problem consists in the introduction of
     convex obstacles inside the domain $\Omega$. That is, let $\Omega_{k}$,
     $1\le k \le M$, be simply connected bounded domains in $\mathbb{R}^{n}$,
     $n \ge 3$ and let $D=\Omega \setminus \cup_{k=1}^{M}\Omega_{k}$.
     If we consider again the equation
     \begin{equation*}
     \HYPEQN{A}{V}{u} = 0 \qquad\text{in }\ERRE{}\times\Omega,
     \end{equation*}
    \begin{align*}
     u(t,x) 
     & = 0\makebox[2.5 em]{} \qquad\text{for }
               t<<0 \\ \label{hyp:eqn1b}
                          u(t,x) & = f\makebox[2.5 em]{$(t,x)$}\qquad\text{on }
              \ERRE{}\times\partial\Omega,
     \end{align*}
     with the additional condition
     \begin{equation}
         u(t,x)\big|_{\ERRE{}\times\partial\Omega_{k}}= 0,\quad 1\le k \le M.
     \end{equation}

     Then we can prove as in the case of no obstacles that the vector valued potential
     $\mathcal{A}(t,x)$ satisfies

     \begin{equation}\label{eqn:45deg:rays}
        \mathcal{PA}(t,x;\omega) =
                    \int_{\infty}^{\infty} \sum_{j=0}^{n}\omega_{j}
                    A_{j}(t+s,x+s\omega)\thinspace\mathrm{d}s = 0
     \end{equation}
     for any ray
     \begin{equation}\label{ray:gamma}
      \gamma =  \{(t,x)+s(1,\omega) \thinspace | \thinspace s\in \mathbb{R} \}
     \end{equation}
     not intersecting the obstacles $\ERRE{}\times\Omega_{k}$, $1\le k \le M$.
     We will show that under some conditions the vector and
    scalar potentials can be recovered outside these obstacles.

    As a warm up let us consider the case where $n=3$, there is only
    one obstacle and the integrals over light rays are zero for a
    smooth
    scalar function $f=f(t,x)$ satisfying the support condition $f(t,x)=0$
    when $|x| > R$, and the growth condition $ | f(t,x) | \le C(1+|t|)^N$
    for some integer $N$.

    Under these settings, for $\tilde{x}$ an arbitrary point in $D$,
    we can find a two dimensional plane $P$ in $\mathbb{R}^{3}$ 
    containing $\tilde{x}$ such
    that $P$ misses $\Omega_{1}$, this is $P\cap \Omega_{1} = \emptyset$.
    We then realize that the set of rays $\gamma$ of the form
    (\ref{ray:gamma}) that are contained in the three dimensional
    space $\mathbb{R}^{1}_{t}\times P$ and pass through $\tilde{x}$,
    can be parametrized by the one dimensional sphere $S^{1}$. This
    in turn, allows us to use our previous considerations for a scalar 
    function in two dimensions (c.f. \emph{Ramm}--\emph{J. Sj\"ostrand}, 
    \cite{Ramm:Sjostrand}) to conclude that
    $f$ vanishes in $\mathbb{R}^{1}_{t}\times P$. Since $P$ could
    be any two dimensional plane not intersecting $\Omega_{1}$ and
    $\tilde{x}$ was selected to be an arbitrary point in
    $D =\Omega \setminus \Omega_{1}$ we conclude that $f=0$
    on $D\times \mathbb{R}^{1}_{t}$.

    Unfortunately for the vector potential things are going to be
    slightly harder and we will have to make some geometric considerations
    in order to obtain an equivalent result.

    We shall impose the following restriction on the problem:
    \begin{itemize}
       \item[\emph{(G1)}] The obstacles are convex and
                   \begin{itemize}
                        \item when $n\ge4$, for each $\tilde{x}
                              \in D$ there exists a two dimensional plane $P$
                  passing through $\tilde{x}$ such that $P$ does not intersect
		  any of the obstacles.
                        \item when $n=3$ there is only one obstacle.
                   \end{itemize}
    \end{itemize}

    \begin{thm}\label{obstacles:1}
    If condition (G1) holds and the Dirichlet to Neumann operators 
    are equal ($\Lambda_{1} = \Lambda_{2}$),
    then the potentials $\mathcal{A}^{(1)}$ and $\mathcal{A}^{(2)}$
    are gauge equivalent, i.e., there exists a smooth function $\varphi$
    such that $\mathcal{A}^{(1)} - \mathcal{A}^{(2)} = \mathrm{d}\varphi$.
    \end{thm}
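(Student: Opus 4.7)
The strategy is to reduce the obstacle problem to the no-obstacle case of Theorem \ref{prop:uniq2} by slicing the domain along two-dimensional planes furnished by condition $(G1)$. First, I would repeat the Green's formula and geometric optics calculation of Sections \ref{sec:go:solns}--\ref{sec:green:formula}, now restricting the cutoff $\chi$ in (\ref{eqn:2.24}) to be supported in a thin tubular neighborhood of a single light ray $\gamma$ that does not meet any obstacle $\ERRE{}\times\Omega_{k}$. This guarantees that the GO ansatz lives away from the obstacles and is compatible with the additional Dirichlet conditions $u|_{\ERRE{}\times\partial\Omega_{k}}=0$. Repeating the argument of Lemma \ref{lemma:4.1} and using the compact spatial support of $\mathcal{A}$ to pin the resulting integer to zero, one arrives at (\ref{eqn:45deg:rays}) for every obstacle-avoiding light ray.

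Next, fix $\tilde{x}\in D$ and invoke $(G1)$ to pick a 2-plane $P\subset\ERRE{n}$ through $\tilde{x}$ with $P\cap\bigcup_{k}\Omega_{k}=\emptyset$. The restriction of the difference potential $\mathcal{A}=\mathcal{A}^{(2)}-\mathcal{A}^{(1)}$ to the three-dimensional spacetime slab $\ERRE{}\times P$ has three components and, crucially, every light ray contained in $\ERRE{}\times P$ avoids the obstacles. Consequently all such slab ray integrals vanish, and Theorem \ref{prop:uniq2} applied inside $\ERRE{}\times P$ (with effective spatial dimension $2$) produces a smooth function $\varphi_{P}$ with $\mathcal{A}|_{\ERRE{}\times P}=\mathrm{d}\varphi_{P}$.

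The main step is to upgrade these slab gauges into a single $\varphi$ on $\ERRE{}\times D$. Because each $\Omega_{k}$ is convex and bounded, the collection of 2-planes through $\tilde{x}$ missing all obstacles is open in the Grassmannian $\mathrm{Gr}(2,n)$. The identity $\mathcal{A}|_{\ERRE{}\times P}=\mathrm{d}\varphi_{P}$ is equivalent to the vanishing of $\mathrm{d}\mathcal{A}$ on every 2-plane inside the tangent space of $\ERRE{}\times P$. Letting $P$ range over the open family at $\tilde{x}$, these tangent 2-planes sweep out an open subset of $\mathrm{Gr}(2,T_{(t,\tilde{x})}(\ERRE{}\times\ERRE{n}))$, and a 2-form at a point that vanishes on an open set of 2-planes must be identically zero. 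Hence $\mathrm{d}\mathcal{A}=0$ on $\ERRE{}\times D$. Since $n\ge 3$ and the $\Omega_{k}$ are convex, $\ERRE{}\times D$ is simply connected, and the Poincar\'e lemma produces $\varphi\in C^{\infty}(\ERRE{}\times\Seg{D})$ with $\mathcal{A}=\mathrm{d}\varphi$. The normalization $g=e^{i\varphi}=1$ required on $\ERRE{}\times\partial\Omega$ by Definition \ref{def:gauge:equiv} is then obtained by adding a constant on each connected component of $\partial\Omega$.

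The principal obstacle will be the promotion step in the previous paragraph: $(G1)$ only asserts the existence of a single admissible plane per point, and convexity and boundedness of the obstacles must be used to upgrade this to a Grassmannian-open family whose associated tangent 2-planes span the full $\Lambda^{2}T_{(t,\tilde{x})}(\ERRE{}\times\ERRE{n})$. The case $n=3$ requires separate attention because a 2-plane is then already a hyperplane and the Grassmannian of 2-planes through a point is lower-dimensional; there the single-obstacle restriction built into $(G1)$ is what keeps the argument alive.
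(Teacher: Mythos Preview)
Your proposal is correct and follows essentially the same strategy as the paper: establish (\ref{eqn:45deg:rays}) on obstacle-avoiding light rays, slice along admissible $2$-planes furnished by $(G1)$, apply Theorem~\ref{prop:uniq2} on each three-dimensional slab $\ERRE{}\times P$ to kill the restricted curvature, and conclude $\mathrm{d}\mathcal{A}=0$ on the simply connected set $\ERRE{}\times D$. The only cosmetic difference is that the paper carries out the ``promotion step'' by explicitly picking $n$ linearly independent directions $\eta_{1},\dots,\eta_{n}$ close to the given plane $P$ so that every pairwise plane $P_{jp}=\mathrm{span}(\eta_{j},\eta_{p})$ still avoids the obstacles, and then reads off $b'_{0j}=b'_{0p}=b'_{jp}=0$ component by component in the associated coordinate system---this is exactly a concrete realization of your Grassmannian-openness argument that a $2$-form vanishing on an open set of $2$-planes is zero.
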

    \begin{proof}
     Let us proceed first by assuming that $n=3$.

     Regarding the vector potential $\mathcal{A}(t,x)$ as a 1-form, the
     goal will be to prove that the 2-form $d\mathcal{A}$ vanishes
     outside $\Omega_{1}$.

     For $\tilde{x}$ not in $\Omega_{1}$, using condition (G1) we can find
     a 2-dimensional plane $P \subset \mathbb{R}^{3}_{x}$ not intersecting
     the obstacle and we can choose three linearly independent unit vectors
     $\eta_{j}$, $j=1,2,3$ close to $P$ such that any plane $P_{jp}$,
     $1\le j,p\le 3$ passing through $\tilde{x}+\eta_{j}$ and $\tilde{x}+\eta_{p}$
     does not meet $\Omega_{1}$. Next, introducing coordinates
     $x^{\prime}=(x^{\prime}_{1},x^{\prime}_{2},x^{\prime}_{2})$ in $\mathbb{R}^{3}_{x}$
     by the formula $x - \tilde{x} = x_{1}^{\prime}\eta_{1}+ \cdots +x_{3}^{\prime}\eta_{3}$
     and denoting by $\mathcal{A}^{\prime}$ and $B^{\prime}$ the 1-form and 2-form
     mentioned above expressed in the new coordinate system, we have
      \begin{align}\label{diff:forms}
          \mathcal{A}^{\prime} & = A^{\prime}_{0}\mathrm{d} x_{0}^{\prime} +
                      A^{\prime}_{1}\mathrm{d} x_{1}^{\prime} +
                          A^{\prime}_{2}\mathrm{d} x_{2}^{\prime} +
              A^{\prime}_{3}\mathrm{d} x_{3}^{\prime} \\
          B^{\prime}(t,x^{\prime}) & = \sum_{0\le p<j\le3} b^{\prime}_{jp}(t,x^{\prime})
                              \thinspace \mathrm{d}x_{j}^{\prime}
                  \wedge \mathrm{d}x_{p}^{\prime}
      \end{align}
      where $x_{0}^{\prime}= t$ and
      \begin{equation}\label{b:jays}
          b^{\prime}_{jp} = \frac{\partial\thinspace A^{\prime}_{j} }{\partial x_{p}^{\prime}}   -
                            \frac{\partial\thinspace A^{\prime}_{p} }{\partial x_{j}^{\prime}}.
      \end{equation}
     The restriction of $B^{\prime}$ to the 3-dimensional space
     $\ERRE{1}_{t}\times P_{jp}$ is given by
     \begin{equation}\label{restric:B}
         B^{\prime}\big|_{\ERRE{1}_{t}\times P_{jp}} =
           b^{\prime}_{0j} \thinspace\mathrm{d}x_{0}^{\prime}
            \wedge \mathrm{d}x_{j}^{\prime} +
           b^{\prime}_{0p} \thinspace\mathrm{d}x_{0}^{\prime}
            \wedge \mathrm{d}x_{p}^{\prime} +
           b^{\prime}_{jp} \thinspace\mathrm{d}x_{j}^{\prime}
            \wedge \mathrm{d}x_{p}^{\prime}
     \end{equation}
     and since there are no obstacles in $\ERRE{1}_{t}\times P_{jp}$
     theorem (\ref{prop:uniq2}) gives
     \begin{equation}\label{grad:A:new:coord}
         A^{\prime}\big|_{\ERRE{1}_{t}\times P_{jp}} = \nabla
              \varphi_{jp}(t,x_{j}^{\prime},x_{p}^{\prime})
     \end{equation}
      where $\nabla$ is the gradient in the $(t,x_{j}^{\prime},x_{p}^{\prime})$ and
      $\varphi_{jp}$ is a smooth function 
      compactly supported in $x_{jp}^{\prime}=(x^{\prime}_{j},x^{\prime}_{p})$.
      Clearly then 
      equation (\ref{b:jays}) gives
       $B^{\prime}\big|_{\Pi_{jp}\times\mathbb{R}_{t}^{1}} = 0$.

     As the above discussion holds for all three dimensional
     spaces parallel and close to $P_{jp}$ we see that $B^{\prime}$ and
     thus $B = \mathrm{d}\mathcal{A}$ vanishes for $x$ near $\tilde{x}$ and for
     all values of $t$. Being that
     $\tilde{x}$ is an arbitrary point not in the obstacle, we get that
     $\mathrm{d}\mathcal{A}$ vanishes outside $\Omega_{1}$ and thus,
     since we are working in a simply connected domain,
     the vector potential is
     the gradient of a smooth function.

      When there are two or more obstacles and $\tilde{x}$ is a point not lying in any of them,
      we resort to the geometric condition (G1) to find a two dimensional plane
      $P_{0}$ such that $P_{0}$ intersects no obstacles and choose
      $n$ linearly independent unit vectors $\eta_{j},\dots,\eta_{n}$ close to
      $P_{0}$ in such a way that $\eta_{1},\eta_{2}\in P_{0}$ and the planes
      $P_{jp}$, $1\le j,p \le n$, do not intersect any obstacle. 

      As before, we introduce new coordinates $x^{\prime}=
      (x^{\prime}_{1},\dots,x^{\prime}_{n})$ given by $x - \tilde{x} =
      x_{1}^{\prime}\eta_{1}+ \cdots +x_{n}^{\prime}\eta_{n}$, and express
      $\mathcal{A}$ and $B$ in terms of these new coordidates.
      If we now consider the restrictions of $B$  to the spaces
      $\ERRE{1}_{t}\times P_{jp}$, $1\le j,p \le n$, we realize thet we are 
      back into the previous case. Making use of
      the fact that $\mathcal{A}^{\prime}\big|_{\ERRE{1}_{t}\times P_{jp}} =
      \nabla \varphi_{jp}(t,x^{\prime}_{j},x^{\prime}_{p})$  we obtain
      via equation (\ref{b:jays}) that
       \begin{equation}\label{B:restric:sigma}
          B^{\prime}\big|_{\ERRE{1}_{t}\times P_{jp}} = 0,
       \end{equation}
       which as before leads to $B=0$ and hence
       $ \mathcal{A}(t,x) = \nabla_{t,x}\Psi(t,x)$
       for some smooth $\Psi$.
       \end{proof}

%
    When condition \textit{(G1)} fails to hold, we can still recover some
    information regarding the difference of the vector potentials provided
    that $\mathrm{d}\mathcal{A}=0$. 
    As we did before, let us regard vector potentials as $1$-forms
    and let us consider the case when we have 2 obstacles inside the domain 
    $\Omega$. 

    This time 
    it might be the case that
    the domain is not simply connected and that 
    for some close path $\gamma$ the integral
    $\int_{\gamma} \mathcal{A}\cdot\mathrm{d}\bar{x}$
    is not zero (here we set $\bar{x}=(t,x)$), however, the condition
    $\mathrm{d}\mathcal{A}=0$ guarantees that it only depends on the
    homotopy class of $\gamma$.

    With this consideration in mind we want to be able to ``span'' every
    possible homotopy class representative $\gamma$ of the domain $\Omega$
    by using light rays. In other words, we would like to impose on
    our domain the geometric condition $(G2)$: every homotopy 
    representative can be continuosly contour-deformed into light rays.

    If this geometric condition is met, we can write for $\gamma_{1}$ an
    arbitrary simple closed path in $\Omega$ surrounding the first of the
    obstacles
    \begin{equation*}
        \int_{\gamma_{1}} \mathcal{A}(\bar{x})\cdot\thinspace\mathrm{d}\bar{x} =
        \int_{\ell_{1}} \mathcal{A}(\bar{x})\cdot\thinspace\mathrm{d}\bar{x} +
        \int_{\ell_{2}} \mathcal{A}(\bar{x})\cdot\thinspace\mathrm{d}\bar{x} + \cdots +
        \int_{\ell_{r}} \mathcal{A}(\bar{x})\cdot\thinspace\mathrm{d}\bar{x} ,
    \end{equation*}
    where the set of light rays $\ell_{1},\ell_{2},\dots,\ell_{r}$, surround the
    first of the obstacles and are such that they do not intersect the second
    obstacle. Then by the previous arguments regarding the construction of
    geometric optics solutions and Green's formula we have if
    $\mathcal{A}^{(1)}$ and $\mathcal{A}^{(2)}$ correspond to equal
    Dirichlet to Neumann operators
    \begin{equation*}
        \int_{\gamma_{1}} \left(\mathcal{A}^{(1)}(\bar{x}) -
    \mathcal{A}^{(2)}(\bar{x})\right) \cdot\thinspace\mathrm{d}\bar{x} = 2\pi m_{1}.
    \end{equation*}
    Proceeding in a similar fashion, we have for the second obstacle and a contour
    $\gamma_{2}$ surrounding it
    \begin{equation*}
        \int_{\gamma_{2}} \left(\mathcal{A}^{(1)}(\bar{x}) -
    \mathcal{A}^{(2)}(\bar{x})\right) \cdot\thinspace\mathrm{d}\bar{x} = 2\pi m_{2}.
    \end{equation*}
    Then, for an arbitrary closed contour $\gamma$ we have (after a contour deformation
    argument) that
    \begin{equation*}
        \int_{\gamma=c_{1}\gamma_{1}+c_{2}\gamma_{2}}
         \left(\mathcal{A}^{(1)}(\bar{x}) - \mathcal{A}^{(2)}(\bar{x})\right)
         \cdot\thinspace\mathrm{d}\bar{x}
         = 2\pi c_{1}m_{1} + 2\pi c_{2}m_{2}.
    \end{equation*}
    where $c_{1}$ and $c_{2}$ are two integers. We now let $\Theta(\bar{x})$,
    be the function that computes the angle between
    the projection of $\bar{x}$ into the hyperplane $t=0$ and the vector $(0,0,\dots,0,1)$,
    and for $j=1,2$ we set $\Theta_{j}(\bar{x}) = \Theta(x-p_{j})$, where $p_{j}$
    is any point inside the obstacle $j$.
    Then the functions $\Theta_{j}$, compute the `angle that a vector makes inside
    the obstacle $j$' and we have
    \begin{equation*}
        \int_{\gamma}
         \left(\mathcal{A}^{(1)}(\bar{x}) - \mathcal{A}^{(2)}(\bar{x})
               - m_{1}\Theta_{1}(\bar{x}) - m_{2}\Theta_{2}(\bar{x}) \right)
         \cdot\thinspace\mathrm{d}\bar{x} = 0
    \end{equation*}
    for any closed contour $\gamma$. Therefore
    \begin{equation}\label{eqn:varphi}
         \mathcal{A}^{(1)}(\bar{x}) - \mathcal{A}^{(2)}(\bar{x})
               - m_{1}\Theta_{1}(\bar{x}) - m_{2}\Theta_{2}(\bar{x})
         = \partial_{\bar{x}} \varphi (\bar{x})
    \end{equation}
    for some function $\varphi$.

    We can certainly say more, if $\gamma(\bar{x}_{0};\bar{x})$ is any curve
    joining the points $\bar{x}=(t,x)$ and $\bar{x}_{0}=(t_{0},x_{0})$, and  we
    let
    \begin{equation*}
        C_{0}(\bar{x}) = \exp \left(
                     -i \int_{\gamma(\bar{x}_{0};\bar{x})} \left(
             m_{1}\Theta_{1}(\bar{x}) - m_{2}\Theta_{2}(\bar{x})
             \right) \cdot\thinspace\mathrm{d}\bar{x} \right),
    \end{equation*}
    then
    \begin{equation*}
        \frac{i}{C_{0}(\bar{x})} \partial_{\bar{x}} C_{0}(\bar{x})
    = m_{1}\Theta_{1}(\bar{x}) - m_{2}\Theta_{2}(\bar{x})
    \end{equation*}
    and equation (\ref{eqn:varphi}) can be reewritten as
    \begin{equation*}
         \mathcal{A}^{(1)}(\bar{x}) - \mathcal{A}^{(2)}(\bar{x}) -
        \frac{i}{C_{0}(\bar{x})} \partial_{\bar{x}} C_{0}(\bar{x}) =
    \partial_{\bar{x}} \varphi (\bar{x})
    \end{equation*}
    or
    \begin{equation*}
         \mathcal{A}^{(1)}(\bar{x}) - \mathcal{A}^{(2)}(\bar{x}) =
        \frac{i}{C(\bar{x})} \partial_{\bar{x}} C(\bar{x})
    \end{equation*}
    where $C(\bar{x})=C_{0}(\bar{x})\exp(i\varphi(\bar{x}))$.

    To conclude this section let us consider the case when 
    we have any number of obstacles,
    $\mathrm{d}\mathcal{A} = 0$, 
    $ \mathcal{A} = \mathcal{A}^{(1)} - \mathcal{A}^{(2)} $,
    $\Omega$ is multi-connected and condition $(G2)$ holds.

    \begin{thm}\label{obstacles:2}
    If $\mathrm{d}\mathcal{A}^{(1)} - \mathrm{d}\mathcal{A}^{(2)} = 0$,
    condition (G2) is satisfied and the Dirichlet to Neumann operators 
    are equal ($\Lambda_{1} = \Lambda_{2}$).
    Then the potentials $\mathcal{A}^{(1)}$ and $\mathcal{A}^{(2)}$
    are gauge equivalent, 
    i.e., there exists a smooth function $C=C(\bar{x})$
    such that $|C| = 1$ for 
    $\bar{x} = (t,x) \in \ERRE{}\times\partial\Omega$ and
    $\mathcal{A}^{(1)} - \mathcal{A}^{(2)} = 
     \frac{i}{C(\bar{x})} \partial_{\bar{x}} C(\bar{x})$.
    \end{thm}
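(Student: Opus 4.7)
The plan is to mimic the two-obstacle argument carried out just before the statement, using the generators of $\pi_1(\mathbb{R}\times D)$ supplied by one loop around each obstacle, together with condition $(G2)$ to deform them into light-ray concatenations.

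Write $\mathcal{A} = \mathcal{A}^{(1)} - \mathcal{A}^{(2)}$ and regard it as a closed $1$-form since $d\mathcal{A}=0$. For each obstacle $\Omega_k$, $1\le k\le M$, pick a simple closed loop $\gamma_k$ in $\mathbb{R}\times D$ encircling only $\Omega_k$; the classes $[\gamma_k]$ generate $\pi_1(\mathbb{R}\times D)$. By $(G2)$ each $\gamma_k$ is homotopic (relative to a fixed base point) to a finite concatenation of light rays that avoid every other obstacle, so applying Lemma \ref{lemma:4.1} and equation (\ref{eqn:3.19}) to each segment yields
\begin{equation*}
\int_{\gamma_k} \mathcal{A}\cdot d\bar{x} = 2\pi m_k, \qquad m_k\in\mathbb{Z},
\end{equation*}
exactly as in the two-obstacle computation. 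Since $\mathcal{A}$ is closed, an arbitrary loop $\gamma$ with $[\gamma]=\sum_{k=1}^{M} c_k[\gamma_k]$ satisfies $\int_\gamma \mathcal{A}\cdot d\bar{x} = 2\pi\sum_{k=1}^{M} c_k m_k$.

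Next, pick $p_k\in\Omega_k$ and set $\Theta_k(\bar{x})=\Theta(x-p_k)$, where $\Theta$ is the angular coordinate used in the two-obstacle case, so that $d\Theta_k$ is a closed $1$-form on $\mathbb{R}\times D$ with periods $\int_{\gamma_j} d\Theta_k = 2\pi\delta_{jk}$. Consequently the closed $1$-form $\mathcal{A} - \sum_{k=1}^{M} m_k\, d\Theta_k$ has vanishing period on every generator of $\pi_1(\mathbb{R}\times D)$, hence on every loop, and is therefore exact on $\mathbb{R}\times D$:
\begin{equation*}
\mathcal{A}^{(1)} - \mathcal{A}^{(2)} - \sum_{k=1}^{M} m_k\, d\Theta_k = \partial_{\bar{x}}\varphi
\end{equation*}
for some smooth $\varphi$.

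Finally, fix a basepoint $\bar{x}_0\in\ERRE{}\times\partial\Omega$ and define
\begin{equation*}
C_0(\bar{x}) = \exp\Big(-i\int_{\gamma(\bar{x}_0;\bar{x})}\sum_{k=1}^{M} m_k\, d\Theta_k\Big),
\end{equation*}
which is single-valued because the periods of $\sum m_k\, d\Theta_k$ lie in $2\pi\mathbb{Z}$, and satisfies $|C_0|\equiv 1$. Setting $C(\bar{x})=C_0(\bar{x})\exp(i\varphi(\bar{x}))$ and normalizing $\varphi$ by a real additive constant on each connected component so that $|C|=1$ on $\ERRE{}\times\partial\Omega$ gives
\begin{equation*}
\frac{i}{C(\bar{x})}\partial_{\bar{x}}C(\bar{x}) = \sum_{k=1}^{M} m_k\, d\Theta_k + \partial_{\bar{x}}\varphi = \mathcal{A}^{(1)}(\bar{x}) - \mathcal{A}^{(2)}(\bar{x}),
\end{equation*}
which is the required gauge relation. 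The main obstacle is not the algebraic rearrangement above but the geometric input from $(G2)$: one must verify that each generating loop $\gamma_k$ can genuinely be deformed into a concatenation of light rays avoiding all the other obstacles, since Lemma \ref{lemma:4.1} applies only to rays lying in the complement of the entire obstacle configuration.
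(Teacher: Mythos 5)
Your proof is correct and follows essentially the same route as the paper: you reduce the period of the closed form $\mathcal{A}^{(1)}-\mathcal{A}^{(2)}$ over an arbitrary loop to periods over generators, use $(G2)$ together with Lemma \ref{lemma:4.1} to place each such period in $2\pi\mathbb{Z}$, and then assemble the gauge $C$ from the angle forms $\Theta_{k}$ exactly as in the paper's preceding two-obstacle discussion. In fact you spell out the construction of $C$ (and the single-valuedness of the path-ordered exponential) in more detail than the paper's own terse proof, which merely asserts the existence of $C$ in the gauge group once all periods are integer multiples of $2\pi$; your closing caveat about $(G2)$ being the essential geometric input is also exactly where the paper places the burden.
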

    \begin{proof}
    Let $G\big((-T,T)\times\Omega\big)$ denote the gauge group 
    corresponding to the set $(-T,T)\times\Omega$ and let 
    $\ell_{1},\dots,\ell_{p}$ be a basis for the homology group, 
    this is, $\gamma = n_{1}\ell_{1} + \cdots + n_{p}\ell_{p}$ 
    for any closed contour $\gamma$. Since $\mathrm{d}\mathcal{A} = 0$ 
    the integrals 
    $\int_{\gamma} \mathcal{A}(\bar{x})\cdot\mathrm{d}\bar{x}$
    depend only on the homotopy class of $\gamma$. By condition
    $(G2)$ the basis of the homology group can be spanned by light 
    rays that do not intersect any of the obstacles, hence if 
    $\mathcal{A}^{(1)}$ and $\mathcal{A}^{(2)}$ correspond to 
    gauge equivalent Dirichlet-to-Neumann operators, we have 
    as before that 
    \begin{equation*}
       e^{ i \int_{\ell_{k}} \mathcal{A}^{(1)}(\bar{x}) \cdot \mathrm{d}\bar{x}} = 
       e^{ i \int_{\ell_{k}} \mathcal{A}^{(2)}(\bar{x}) \cdot \mathrm{d}\bar{x}}.
    \end{equation*}
    Therefore for any closed curve $\gamma$
    \begin{equation*}
        \int_{\gamma} \big( \mathcal{A}^{(1)}(\bar{x}) - \mathcal{A}^{(2)}(\bar{x}) \big)  
	                \cdot \mathrm{d}\bar{x} =  2\pi q,\quad q\in\mathbb{Z},
    \end{equation*}
    which in turn implies the existance of a function
    $C(t,x)\in G\big( (-T,T)\times\Omega \big)$
    such that 
    \begin{equation*}
         \mathcal{A}^{(1)}(\bar{x}) - \mathcal{A}^{(2)}(\bar{x}) =
        \frac{i}{C(\bar{x})} \partial_{\bar{x}} C(\bar{x}).
    \end{equation*}
    \end{proof}

\pagebreak
\section*{Appendix A: Auxiliary results}
\subsubsection*{Orthogonal complement of a set in $\ERRE{m}$}

\textit{ \textbf{Lemma: }
The orthogonal complement of the set
\begin{equation*}
   E=\{(1,\omega) \thinspace : \thinspace \omega\in S^{(n-1)},
   \tau +\omega\cdot\xi=0, |\tau|<|\xi|\}
\end{equation*}
is a one dimensional subspace of $\ERRE{n+1}$.  }

Let us start off with some linear algebra facts.

Let $m\ge 2$. If $A\subseteq B \subseteq \ERRE{m}$ with
\begin{equation*}
A\subseteq B \subseteq \mathrm{Span}(A) =
\Big\{
\sum_{p=1}^{r} \alpha_{p}a_{p} \thinspace : \thinspace
\alpha_{p}\in\ERRE{}, a_{p}\in A, r\in\mathbb{N}
\Big\},
\end{equation*}
then
\begin{equation*}
A^{\perp} =
B^{\perp} =
\mathrm{Span}(A)^{\perp}.
\end{equation*}

Indeed, since orthogonal complements reverse inclusions we have
$\mathrm{Span}(A)^{\perp}\subseteq B^{\perp} \subseteq A^{\perp}$.
Also, if $x\cdot a = 0$ for all $a\in A$, then
$x\cdot\sum_{p=0}^{r}\alpha_{p}a_{p} =
\sum_{p=0}^{r}\alpha_{p}(x\cdot a_{p}) = 0 $,
which shows that $A^{\perp}\subseteq \mathrm{Span}(A)^{\perp}$.
Therefore $A^{\perp} = B^{\perp} = \mathrm{Span}(A)^{\perp}$.

Denoting by $\mathrm{CH}(A)$ the \emph{convex hull} of $A$ and by
$\mathcal{C}(A)$ the \emph{cone spanned} by $A$ we have
\begin{align*}
\mathrm{CH}(A) & = \Big\{
\sum_{p=1}^{r} \alpha_{p}a_{p} \thinspace : \thinspace
\sum_{p=1}^{r} \alpha_{p}=1, 0\le \alpha_{p}\le 1, a_{p}\in A, r\in\mathbb{N}
\Big\} \\
\mathcal{C}(A) & = \{ ta \thinspace : \thinspace t\in\ERRE{+}, a\in A\}.
\end{align*}
Clearly $A\subseteq \mathrm{CH}(A) \subseteq \mathcal{C}\big(\mathrm{CH}(A)\big)$
and since both sets contain particular linear combinations of elements of $A$ we
also have
   $\mathrm{Span}\Big(\mathcal{C}\big(\mathrm{CH}(A)\big) \Big)
   = \mathrm{Span}(A)$,
hence,
\begin{equation*}
   \mathrm{Span}\Big(\mathcal{C}\big(\mathrm{CH}(A)\big) \Big)^{\perp}
   = \mathrm{Span}(A)^{\perp}.
\end{equation*}

We want to apply these remarks to the set $E$ but before doing so let
us recall that for $|\tau|<|\xi|$ the vectors $\omega$ satisfaying
$|\omega|=1$, $\tau+\omega\cdot\xi = 0$, can be parametraized by $S^{n-2}$.
Since rotations are non-singular transformations we can compute instead the
dimension of the orthogonal complement of the set
\begin{equation*}
   \tilde{E} =
   \{(1,\omega_{1},\dots,\omega_{n-1},a) \thinspace : \thinspace
   \omega_{1}^{2}+\cdots+\omega_{n-1}^{2} = 1 - a^{2}\},
\end{equation*}
where $0\le a < 1$ is a fixed number. Taking into account our previous observations
we then have
\begin{align*}
   \tilde{E}^{\perp} & = \mathrm{Span}\Big( \mathcal{C}\big( \mathrm{CH}(\tilde{E})\big)\Big)^{\perp} \\
   & = \mathrm{Span}\Big(\mathcal{C}\big(\{(1,\omega_{1},\dots,\omega_{n-1},a) \thinspace : \thinspace
   \omega_{1}^{2}+\cdots+\omega_{n-1}^{2} \le 1 - a^{2}\}\big)\Big)^{\perp}, \\
   & = \mathrm{Span}\big(\{(t,t\theta_{1},\dots,t\theta_{n-1},ta) : 
   t,\theta_{1},\dots,\theta_{n-1}\in\ERRE{}, \theta_{1}^{2} +\cdots +\theta_{n}^{2}\le 1 - a^{2}\}\big)^{\perp}
\end{align*}
and we can see that $\tilde{E}^{\perp}$ is a one dimensional subspace of
$\ERRE{n+1}$ since clearly
$\mathrm{Span}\Big(\mathcal{C}\big(\mathrm{CH}(\tilde{E})\big)\Big)$ is
$n$-dimensional.


\pagebreak

\end{document}